\documentclass[10pt]{article}
\DeclareMathSizes{10}{9}{7}{5}
\usepackage{amsmath,amssymb,amsthm,amsfonts}
\usepackage{bm}
\usepackage{mathrsfs}
\usepackage{graphicx}
\usepackage{mathtools}
\usepackage{braket}
\usepackage{comment}
\usepackage{enumitem}
\setlist[enumerate,1]{label=(\roman*),font=\upshape}
\usepackage{indentfirst}
\usepackage{booktabs}
\usepackage{placeins}
\usepackage{xurl}
\usepackage{authblk}
\usepackage[hidelinks]{hyperref}
\usepackage{bookmark}
\usepackage{cleveref}
\usepackage{xcolor}
\mathtoolsset{showonlyrefs}

\theoremstyle{plain}
\newtheorem{thm}{Theorem}[section]
\newtheorem{pro}{Proposition}[section]
\newtheorem{lem}{Lemma}[section]
\newtheorem{cor}{Corollary}[section]

\theoremstyle{definition}

\newtheorem{Khyp}{Assumption}

\newtheorem{Vhyp}{Assumption}

\crefname{Khyp}{Assumption}{Assumptions}
\crefname{Vhyp}{Assumption}{Assumptions}
\crefname{Ghyp}{Assumption}{Assumptions}
\newtheorem{rem}{Remark}[section]
\numberwithin{equation}{section}

\title{\Large Marginal Composite Likelihood Estimation of Bergomi Models from High-Frequency Option Prices}

\author{Haruki Tomita\thanks{Supported by JST SPRING, Grant Number JPMJSP2138.}}

\affil{\small Graduate School of Engineering Science, The University of Osaka}
\date{}

\begin{document}
\maketitle

\section{Introduction}
Rough volatility models have become an important framework for describing
the irregular dynamics of financial volatility. Empirical evidence of
volatility fluctuations with low local regularity motivated specifications
with a Hurst parameter below one half; see
\cite{GatheralJaissonRosenbaum2018}. The relevance of roughness to option
markets also has a theoretical basis through its connection with the
short-maturity behavior of the implied-volatility skew, as studied in
\cite{Fukasawa2021}. Within the forward variance framework
\cite{Bergomi2016}, the rough Bergomi model \cite{Bayer02062016}
represents this irregularity through a singular power-law kernel.
Motivated by this model, we develop a parametric inference theory for
Bergomi-type models with a general class of positive kernels that may
be singular at the origin.

A substantial literature studies rough volatility using return data
and realized measures of volatility; see, for example,
\cite{FukasawaTakabatakeWestphal2022}.
Models that separate short-term roughness from long-term persistence,
together with noise-robust estimation methods, are developed in
\cite{BennedsenLundePakkanen2022}.
A generalized method of moments approach to fractional log-normal
stochastic volatility models, accounting for measurement error in
realized variance, is proposed in \cite{BolkoEtAl2023}.
Research using option observations to study rough volatility remains
comparatively limited, although important contributions already exist.
Evidence of roughness based on short-maturity implied volatility is
provided in \cite{LivieriEtAl2018}, while a nonparametric estimator of
volatility roughness based on short-dated option portfolios is
constructed in \cite{10.3150/24-BEJ1762}.
These studies form part of a broader literature on option-based
inference, including the parametric option-panel methodology of
\cite{AndersenFusariTodorov2015} and the nonparametric spot-volatility
estimation of \cite{Todorov2019}.
Our focus is the joint estimation of parameters governing a positive,
potentially singular forward variance kernel from high-frequency
observations across time and maturity.

We work under a martingale measure equivalent to the physical measure,
with model filtration $(\mathscr F_t)_{0\leq t\leq1}$.
All expectations below are taken under this martingale measure.
Let $W$ be a one-dimensional $(\mathscr F_t)$-Brownian motion.
The spot variance is defined by
\begin{equation}
    V_t
    =
    V_0^t
    \exp\left\{
        \int_0^t k_\theta(t-s)dW_s
        -
        \frac12\int_0^t k_\theta(t-s)^2ds
    \right\},
\end{equation}
where $V_0^\cdot$ is a positive deterministic initial curve and
$\theta$ is an unknown finite-dimensional parameter.
The positive kernel $k_\theta$ is allowed to be singular at the
origin, subject to the assumptions stated below, which ensure
square integrability near zero.
The normalization gives $E[V_t]=V_0^t$.
Defining the forward variance by
$V_t^u:=E[V_u\mid\mathscr F_t]$, for $t\leq u$,
we obtain
\begin{equation}
    \frac{dV_t^u}{V_t^u}
    =
    k_\theta(u-t)dW_t,
    \qquad t<u.
\end{equation}
Thus, the forward variance curve is an infinite-dimensional state
driven by a single Brownian motion.
Our principal example is the pure power-law specification
\begin{equation}
    k_\theta(x)=\eta x^{H-1/2},
    \qquad
    \theta=(\eta,H),\quad
    \eta>0,\quad 0<H\leq1/2.
\end{equation}
For $H<1/2$, the variance dynamics above correspond to those
of the rough Bergomi model.
The parameter $H$ controls the singularity at the origin and the
local regularity of volatility, while the kernel as a whole
determines how a common shock propagates across maturities.
Our theory covers a general class of positive kernels,
including both singular and regular specifications.

We assume that the underlying asset price $S_t$ is a strictly positive
continuous martingale with spot variance $V_t$.
Asset-price jumps are excluded, and interest rates and dividends are
set to zero.
Since the forward variance curve is not directly observed, we introduce
the cumulative forward variance.
Under standard integrability conditions, it admits the option-price
representation \cite{BrittenJonesNeuberger2000}
\begin{equation}
\begin{aligned}
    I_t^T
    &:=
    \int_t^T V_t^u du \\
    &=
    2\int_0^{S_t}
    \frac{E[(K-S_T)_+\mid\mathscr F_t]}{K^2}dK
    +
    2\int_{S_t}^{\infty}
    \frac{E[(S_T-K)_+\mid\mathscr F_t]}{K^2}dK,
    \qquad t<T.
\end{aligned}
\end{equation}
This representation motivates using cumulative forward variance
observations obtained from option-price time series.

Under the Bergomi model, the cumulative forward variance satisfies
\begin{equation}
    I_t^T
    =
    I_0^T
    -
    \int_0^t V_s ds
    +
    \int_0^t
    \left(
        \int_s^T
        k_\theta(u-s)
        \frac{\partial I_s^u}{\partial u}du
    \right)dW_s,
    \qquad t<T,
\end{equation}
where $\partial I_s^u/\partial u=V_s^u$.
This formulation, developed in our earlier work
\cite{fukasawa2026bergomi}, turns kernel estimation into a
diffusion-coefficient estimation problem for an infinite-dimensional
stochastic differential equation.

Our statistical experiment consists of exact observations of the
cumulative forward variances $\{I_{t_i}^{T_j}\}$, with no observation
or reconstruction error, on a deterministic time--maturity grid
within $[0,1]$:
\begin{equation}
    t_i=i\Delta_n,\qquad
    \Delta_n=1/n,\qquad
    0=T_0<T_1<\cdots<T_d\leq1.
\end{equation}
Writing
$\delta_j:=T_j-T_{j-1}$ and
$\Delta_d:=\max_{1\leq j\leq d}\delta_j$,
we consider the joint limit
\begin{equation}
    n\longrightarrow\infty,\qquad
    \Delta_d\longrightarrow0,\qquad
    T_d\longrightarrow1.
\end{equation}
This setting presents two difficulties.
First, the infinite-dimensional state is driven by highly degenerate
noise: the leading covariance of a vector of maturity increments has
rank one.
Second, the kernel may be singular at the maturity diagonal, as in
the pure power-law specification with $H<1/2$.

Parametric inference for parabolic SPDEs from high-frequency
spatio-temporal observations is studied in
\cite{KainoUchida2021,TonakiKainoUchida2023}.
These models have infinitely many independent Brownian noise
coordinates, whereas the maturity curve in our setting is driven
by a single Brownian motion.
Related inference from integrated curve observations is developed in
\cite{Schroers2026}, where discrete observations of log bond prices
are used to estimate the quadratic covariation of the latent
semimartingale driver in term structure models.
These log prices are negative maturity integrals of the underlying
forward rate curves.
The estimation target in the present paper is the finite-dimensional
parameter governing the forward variance kernel.

In \cite{fukasawa2026bergomi}, we constructed an estimator for
essentially the same observation problem.
Following the discretization strategy of
\cite{genon1993estimation}, we used an Euler--Maruyama approximation
to the cumulative forward variance dynamics and replaced the
resulting degenerate Gaussian approximation by a nondegenerate proxy.
That construction establishes consistency and asymptotic mixed
normality for regular kernels, but it leaves three limitations.
Its smoothness assumptions exclude the singular power-law kernel
of the rough Bergomi model.
Its identification and local nondegeneracy assumptions require
constant-sign conditions on kernel differences and directional
derivatives over sufficiently short intervals adjacent to the origin.
Moreover, the earlier contrast involves an auxiliary parameter
$\epsilon>0$, introduced for technical purposes in the proofs to keep
the logarithm argument and the denominator bounded away from zero.
The resulting estimator depends on this parameter, and its choice
can substantially affect the estimates, as illustrated in that study.
The present paper addresses these limitations by changing the
construction of the estimating function from the outset.

Our first step is to take maturity differences before forming time
increments.
For $t<T_{j-1}$, define
\begin{equation}
    J_t^j
    :=
    I_t^{T_j}-I_t^{T_{j-1}}
    =
    \int_{T_{j-1}}^{T_j}V_t^u du.
\end{equation}
The maturity-independent drift in the equation for $I_t^T$ cancels,
and It\^o's formula gives
\begin{equation}
    d\log J_t^j
    =
    \sigma_t^j(\theta)dW_t
    -
    \frac12(\sigma_t^j(\theta))^2dt,
    \qquad
    \sigma_t^j(\theta)
    :=
    \frac{
        \int_{T_{j-1}}^{T_j}k_\theta(u-t)V_t^u du
    }{
        \int_{T_{j-1}}^{T_j}V_t^u du
    }.
\end{equation}
As the maturity mesh shrinks, we approximate the average of the
product by the product of the corresponding averages.
The forward variance factor then cancels with the denominator,
giving
\begin{equation}
    \sigma_t^j(\theta)\approx\bar k_\theta^j(t),
    \qquad
    \bar k_\theta^j(t)
    :=
    \frac1{\delta_j}
    \int_{T_{j-1}}^{T_j}k_\theta(u-t)du.
\end{equation}
This transformation removes the infinite-dimensional random state
from the leading diffusion coefficient.

Consequently, the logarithmic time increments
\begin{equation}
    X_{ij}
    :=
    \log J_{t_i}^j-\log J_{t_{i-1}}^j,
    \qquad
    j\in\Pi_i:=\{j\mid t_i<T_{j-1}\},
\end{equation}
admit the Gaussian approximation
\begin{equation}
    X_{ij}
    \approx
    \int_{t_{i-1}}^{t_i}\bar k_\theta^j(s)dW_s
    -
    \frac12q_{ij}(\theta),
    \qquad
    q_{ij}(\theta)
    :=
    \int_{t_{i-1}}^{t_i}\bar k_\theta^j(s)^2ds.
\end{equation}
The Gaussian proxy vectors are independent across disjoint time
intervals, although their maturity coordinates remain dependent.
Away from the maturity diagonal, their leading covariance after
normalization by $\sqrt{\Delta_n}$ is
\begin{equation}
    K_i(\theta)K_i(\theta)^\top,
    \qquad
    K_i(\theta)
    :=
    \bigl(\bar k_\theta^j(t_{i-1})\bigr)_{j\in\Pi_i}.
\end{equation}
Its rank-one structure makes the leading joint Gaussian likelihood
singular.
Because the components of $K_i(\theta)$ are positive, the marginal
variances determine the entire leading covariance matrix.
This observation motivates a contrast based on the marginal
Gaussian likelihoods.

After dropping parameter-independent terms, this construction gives
the weighted marginal composite likelihood contrast
\begin{equation}
    U_{n,d}(\theta)
    =
    \frac1n
    \sum_{i=1}^n
    \sum_{j\in\Pi_i}
    \delta_j
    \left[
        \log\frac{q_{ij}(\theta)}{\Delta_n}
        +
        \frac{X_{ij}^2}{q_{ij}(\theta)}
        +
        \frac{q_{ij}(\theta)}4
    \right].
\end{equation}
We define $\hat\theta_{n,d}$ as a minimizer over the parameter space.
The construction belongs to the composite likelihood framework
\cite{Lindsay1988,VarinReidFirth2011}.
The use of marginal likelihoods does not impose independence across
maturities: their dependence is retained in the limiting sandwich
covariance.
The proposed contrast contains no auxiliary parameter $\epsilon$.
Its evaluation requires neither reconstruction of the full forward
variance curve nor inversion of a maturity covariance matrix.

Our main results establish consistency and $\mathscr F_1$-stable
asymptotic normality for general positive kernel families, including
singular kernels, in the joint time--maturity limit.
Consistency requires no relative rate restriction between the two
meshes and allows the true parameter to lie on the boundary of the
parameter space.
For an interior true parameter, we establish $\mathscr F_1$-stable
convergence to a Gaussian limit under additional regularity and
nondegeneracy assumptions and explicit mesh rate conditions.
This stable limit is preserved under equivalent changes of probability
measure on $\mathscr F_1$.
We also give sufficient mesh rate conditions for the pure power-law
kernel.
\section{Notation and assumption}
Throughout the paper, let
$(\Omega,\mathscr F,(\mathscr F_t)_{0\leq t\leq1},P)$
be a filtered probability space satisfying the usual conditions,
and let $W=(W_t)_{0\leq t\leq1}$ be a one-dimensional
$(\mathscr F_t)$-Brownian motion.

Let $q\geq1$, and let $\Theta\subset\mathbb R^q$ be compact and
convex with nonempty interior. The true parameter is denoted by
$\theta_0\in\Theta$.
Let
\begin{equation}
    k:\Theta\times(0,\infty)\longrightarrow(0,\infty)
\end{equation}
be a kernel. We write $k_\theta(x):=k(\theta,x)$.

In each application, $\theta$ denotes the vector of parameters
being estimated. Any fixed model parameters are suppressed
from the notation.

For vectors and matrices, $|\cdot|$ denotes a fixed norm. For $p \geq 1$, we write $|\cdot|_p$ for the $L^p$-norm.

\subsection{Observation scheme}
For each $n\geq1$, observations are taken at
\begin{equation}
    t_i\coloneqq i/n,\qquad i=0,\ldots,n,
    \qquad \Delta_n \coloneqq 1/n.
\end{equation}
For each $d\geq1$, let the maturity grid be
\begin{equation}
    0=T_0<T_1<\cdots<T_d\leq1.
\end{equation}
Write
\begin{equation}
    \delta_j\coloneqq T_j-T_{j-1},
    \qquad
    \Delta_d \coloneqq \max_{1\leq j\leq d}\delta_j,
\end{equation}
and define
\begin{equation}
    \Pi_i
    \coloneqq \{j \mid t_i<T_{j-1}\}.
\end{equation}

Throughout the asymptotic analysis, we assume
\begin{equation}
    n\to\infty,\qquad \Delta_d\to0,\qquad T_d\to1.
\end{equation}
For consistency, no further restriction is imposed on the
relative rates of the time and maturity meshes.
Additional rate conditions for asymptotic normality are
stated in the corresponding theorem.

\begin{rem}
Normalizing the observation period to $[0,1]$ does not require maturities to lie in the same interval. We impose the additional restriction $T_d\leq1$ for simplicity. This choice excludes contracts expiring after the observation period and leads to decreasing maturity coverage towards its end. Maturities beyond the observation period could instead be included by allowing a fixed finite upper bound $T>1$, with corresponding adjustments to the assumptions and limiting expressions introduced below. We retain the present restriction to keep the formulation and proofs concise.
\end{rem}
\subsection{Assumptions on the kernel}
We distinguish the assumptions used for consistency from the
additional assumptions used for asymptotic normality.
The former allow boundary values of the true parameter and
require no differentiability with respect to $\theta$.
The quantitative bounds are imposed only on
$\Theta\times(0,1]$, since the observation horizon is $[0,1]$.

\subsubsection{Assumptions for consistency}

\begin{Khyp}
\label{ass:K0}
The kernel $k$ is continuous on $\Theta\times(0,\infty)$.
For every $\theta\in\Theta$, the function
$x\mapsto k_\theta(x)$ is continuously differentiable, and
$\partial_x k$ is continuous on $\Theta\times(0,\infty)$.
Continuity in $\theta$ is understood in the relative topology
of $\Theta$.
\end{Khyp}

\begin{Khyp}
\label{ass:K1}
There exist constants $c,C>0$ and $\alpha\in(0,1/2]$
such that
\begin{equation}
    k_\theta(x)\geq c,
    \qquad
    |\partial_x\log k_\theta(x)|
    \leq \frac{1/2-\alpha}{x}+C,
    \qquad
    (\theta,x)\in\Theta\times(0,1].
    \label{eq:K1}
\end{equation}
\end{Khyp}

\begin{Khyp}[Identifiability]
\label{ass:K2}
For every $\theta\in\Theta$,
\begin{equation}
    k_\theta(x)=k_{\theta_0}(x)
    \quad\text{for a.e. }x\in(0,1)
    \quad\Longrightarrow\quad
    \theta=\theta_0.
    \label{eq:K2}
\end{equation}
\end{Khyp}

\subsubsection{Additional assumptions for asymptotic normality}
For asymptotic normality, we additionally require
$\theta_0\in\operatorname{Int}(\Theta)$ and impose the following
assumptions on the parameterization under consideration.
These assumptions are not required for consistency.

\begin{Khyp}
\label{ass:K3}
There exists an open set $\mathcal O\subset\mathbb R^q$
containing $\Theta$ such that $k$ admits a positive continuous
extension to $\mathcal O\times(0,\infty)$, denoted by the same
symbol, with continuous partial derivatives
\begin{equation}
    \partial_{\theta_a}k,\qquad
    \partial_{\theta_a\theta_b}k,
    \qquad a,b\in\{1,\ldots,q\},
\end{equation}
on $\mathcal O\times(0,\infty)$.
\end{Khyp}

\begin{Khyp}
\label{ass:K4}
There exist nonincreasing functions
$L_1,L_2:(0,1]\longrightarrow[1,\infty)$ such that
\begin{equation}
    \sup_{0<x\leq1}x^\varepsilon L_\ell(x)<\infty
    \qquad
    \text{for every }\varepsilon>0
    \text{ and }\ell\in\{1,2\},
    \label{eq:L-subpolynomial}
\end{equation}
and there exists a constant $C>0$ such that
\begin{equation}
    |\nabla_\theta\log k_\theta(x)|
    \leq C L_1(x),
    \qquad
    |\nabla_\theta^2\log k_\theta(x)|
    \leq C L_2(x),
    \qquad
    (\theta,x)\in\Theta\times(0,1].
    \label{eq:K4}
\end{equation}
\end{Khyp}

\begin{Khyp}[Local nondegeneracy]
\label{ass:K5}
For every $y\in\mathbb R^q$,
\begin{equation}
    y^\top\nabla_\theta\log k_{\theta_0}(x)=0
    \quad\text{for a.e. }x\in(0,1)
    \quad\Longrightarrow\quad
    y=0.
\end{equation}
\end{Khyp}

\begin{rem}
Compared with \cite{fukasawa2026bergomi}, the present assumptions accommodate
kernels with power-law singularities at the origin,
in addition to regular kernels.
The identifiability and local nondegeneracy requirements
are also weakened.

In particular, the shifted power-law family
\begin{equation}
    k_\theta(x)=\eta(x+\zeta)^{H-1/2},
    \qquad \theta=(\eta,H,\zeta),
\end{equation}
satisfies the consistency assumptions on compact parameter
sets contained in
$(0,\infty)\times(0,1/2)\times[0,\infty)$.
Thus, consistency allows the shift to be estimated jointly
with the other parameters, even when its true value is zero.
When the shift is estimated, however,
Assumption~\ref{ass:K4} fails if the parameter set includes
$\zeta=0$.
For each fixed $\zeta \geq0$, both sets of kernel assumptions
hold for $\theta=(\eta,H)$; no uniformity as $\zeta \downarrow0$
is asserted.
\end{rem}

\subsection{The forward variance model}

\begin{Vhyp}
\label{ass:V0}
The initial forward variance curve
$V_0^\cdot$
is deterministic. There exist constants $\beta \in (0,1]$,
$c,C>0$
such that
\begin{equation}
    \inf_{u\in[0,1]}V_0^u\geq c
    \label{eq:V0-positive}
\end{equation}
and
\begin{equation}
    |V_0^u-V_0^v|
    \leq C|u-v|^\beta,
    \qquad u,v\in[0,1].
    \label{eq:V0-holder}
\end{equation}
\end{Vhyp}

We define the forward variance curve by
\begin{equation}
    V_t^u
    \coloneqq
    V_0^u
    \exp\left(
        \int_0^t k_{\theta_0}(u-s)dW_s
        -\frac12\int_0^t k_{\theta_0}(u-s)^2ds
    \right),
    \qquad 0\leq t\leq u\leq 1.
\end{equation}
Lemma~\ref{OIdfgnoIAMdofigmmrDSK} shows that $(V_t^u)$ admits a modification that is jointly continuous and strictly positive on $\{(t,u)\mid 0\le t\le u\le1\}$ almost surely. We henceforth use this modification, retaining the notation.

\section{Statement of results}
\subsection{Semimartingale representations of the maturity increments}

We first establish the semimartingale representations used in
the subsequent asymptotic analysis.
Define the cumulative forward variance by
\begin{equation}
    I_t^T \coloneqq \int_t^T V_t^udu, \qquad t <T.
    \label{eq:cumulative-forward-variance}
\end{equation}
For $j=2,\ldots,d$ and $0\leq t<T_{j-1}$, define
\begin{equation}
    J_t^j
    \coloneqq I_t^{T_j}-I_t^{T_{j-1}}
    =\int_{T_{j-1}}^{T_j}V_t^u\,du,
    \quad
    \sigma^j(t)
    \coloneqq
    \frac{
        \int_{T_{j-1}}^{T_j}
        k_{\theta_0}(u-t)V_t^u\,du
    }{J_t^j}.
    \label{eq:maturity-increments}
\end{equation}
By Assumption~\ref{ass:K0} and the joint continuity and
strict positivity of the chosen modification of $V$,
almost surely, for every $j=2,\ldots,d$,
the process $\sigma^j$ is well defined and continuous
on $[0,T_{j-1})$, and hence locally bounded.

\begin{pro}
\label{prop:maturity-increment-representations}
Suppose that Assumptions~\ref{ass:K0}, \ref{ass:K1},
and~\ref{ass:V0} hold.
For each fixed maturity grid, the processes $J^j$ are
continuous and strictly positive on their respective
time intervals.
The following identities hold almost surely,
simultaneously for all $j=2,\ldots,d$
and all $0\leq t<T_{j-1}$:
\begin{align}
    J_t^j
    &=
    J_0^j
    +
    \int_0^t
    \left(
        \int_{T_{j-1}}^{T_j}
        k_{\theta_0}(u-s)V_s^udu
    \right)dW_s,
    \label{eq:M-semimartingale}
    \\
    \log J_t^j
    &=
    \log J_0^j
    -\frac12\int_0^t\sigma^j(s)^2ds
    +\int_0^t \sigma^j(s)dW_s.
    \label{eq:log-M-semimartingale}
\end{align}
\end{pro}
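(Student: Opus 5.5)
Fix the maturity grid and an index $j\in\{2,\ldots,d\}$, and work on $[0,T_{j-1})$. Continuity and strict positivity of $J^j$ follow from Lemma~\ref{OIdfgnoIAMdofigmmrDSK}. On the compact set $\{(t,u)\mid 0\le t\le \tau,\ T_{j-1}\le u\le T_j\}$, with $\tau<T_{j-1}$, the chosen modification of $V$ is jointly continuous and strictly positive. Hence $t\mapsto\int_{T_{j-1}}^{T_j}V_t^u\,du$ is continuous and bounded below by $\delta_j\min V>0$ on $[0,\tau]$. Letting $\tau\uparrow T_{j-1}$ gives the first claim.

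For \eqref{eq:M-semimartingale}, the plan is a stochastic Fubini argument. For each fixed $u\in[T_{j-1},T_j]$ and $t<u$, the process $t\mapsto V_t^u$ is the stochastic exponential of $\int_0^t k_{\theta_0}(u-s)\,dW_s$. Itô's formula therefore gives
\[
V_t^u=V_0^u+\int_0^t k_{\theta_0}(u-s)V_s^u\,dW_s .
\]
This is legitimate because $s\mapsto k_{\theta_0}(u-s)$ is continuous and bounded on $[0,t]$ when $t<T_{j-1}\le u$. Indeed, $u-s\ge T_{j-1}-t>0$ keeps us away from the singularity, and Assumption~\ref{ass:K0} applies. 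I then integrate in $u$ over $[T_{j-1},T_j]$ and interchange $du$ with $dW_s$ using the stochastic Fubini theorem. Its hypothesis is
\[
\int_{T_{j-1}}^{T_j}\Bigl(E\int_0^t k_{\theta_0}(u-s)^2(V_s^u)^2\,ds\Bigr)^{1/2}du<\infty .
\]
This follows because $k_{\theta_0}(u-s)\le \sup_{x\in[T_{j-1}-t,1]}k_{\theta_0}(x)<\infty$ by continuity, and because
\[
E[(V_s^u)^2]=(V_0^u)^2\exp\Bigl(\int_0^s k_{\theta_0}(u-r)^2\,dr\Bigr)
\]
is bounded uniformly on this region. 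A measurable version of the integrand in $(s,u,\omega)$ exists by joint continuity. The resulting identity holds for each $t$ almost surely. Both sides are continuous in $t$: the left by the first step, the right as a stochastic integral. Hence it holds simultaneously for all $t<T_{j-1}$, and then for all $j$ by a finite union.

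For \eqref{eq:log-M-semimartingale}, I apply Itô's formula to $\log J^j$ on $[0,\tau]$. This is valid since $J^j$ is a continuous positive semimartingale there, with $d\langle J^j\rangle_s=(\int_{T_{j-1}}^{T_j}k_{\theta_0}(u-s)V_s^u\,du)^2\,ds$. Dividing by $J^j_s$ produces exactly $\sigma^j(s)\,dW_s-\tfrac12\sigma^j(s)^2\,ds$. The process $\sigma^j$ is continuous, hence locally bounded and progressively measurable, so both integrals are well defined. Letting $\tau\uparrow T_{j-1}$ yields the statement on $[0,T_{j-1})$.

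The main obstacle is the stochastic Fubini step, specifically verifying the integrability and measurability conditions uniformly in $u$. Assumption~\ref{ass:K1} is invoked for this purpose: it gives the growth bound that controls $k_{\theta_0}$ near $0$, which is relevant if one wants to let $t$ approach $T_{j-1}$. Restricting to $t\le\tau<T_{j-1}$ sidesteps the singularity entirely, so I expect this step to be routine.
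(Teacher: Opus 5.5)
Your proposal is correct and follows essentially the same route as the paper: It\^o's formula for each fixed maturity $u$, the stochastic Fubini theorem to integrate over $[T_{j-1},T_j]$, and then It\^o's formula for $\log J^j$. The only difference is that you localize to $[0,\tau]$ with $\tau<T_{j-1}$ and verify the Fubini integrability using only continuity of $k_{\theta_0}$ away from the origin, whereas the paper verifies it directly on the closed interval $[0,T_{j-1}]$. It does this via the bound $\int_0^{T_{j-1}}k_{\theta_0}(u-s)^2ds\le CT_{j-1}^{2\alpha}$ from Lemma~\ref{lemA1} and the moment bounds of Lemma~\ref{OIdfgnoIAMdofigmmrDSK}, so it needs no limiting argument $\tau\uparrow T_{j-1}$.
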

\begin{proof}
Fix $j$ and extend $J^j$ to
$[0,T_{j-1}]$ by the same integral definition.
The chosen modification of $V$ makes this extension
continuous and strictly positive almost surely.

The moment bound in Lemma~\ref{OIdfgnoIAMdofigmmrDSK}
and the kernel estimate in
Lemma~\ref{lemA1} give
\begin{align}
    E\left[
        \int_{T_{j-1}}^{T_j}\int_0^{T_{j-1}}
        k_{\theta_0}(u-s)^2(V_s^u)^2\,ds\,du
    \right]
    &\le
    C\int_{T_{j-1}}^{T_j}
    \int_0^{T_{j-1}}
    k_{\theta_0}(u-s)^2dsdu
    \\
    &\le C\delta_jT_{j-1}^{2\alpha}<\infty.
\end{align}
The integrand is jointly progressively measurable,
and this bound verifies the integrability condition
of the stochastic Fubini theorem
\cite[Theorem~2.2]{Veraar01082012}.

For each fixed $u\geq T_{j-1}$, It\^o's formula
applied to the defining exponential gives
\begin{equation}
    V_t^u-V_0^u
    =
    \int_0^t k_{\theta_0}(u-s)V_s^u\,dW_s,
    \qquad 0\leq t\leq T_{j-1},
\end{equation}
as an equality of continuous processes.
Integrating over $u$ and applying stochastic Fubini
yields \eqref{eq:M-semimartingale} as an equality of
continuous processes on $[0,T_{j-1}]$.

Thus $J^j$ is a continuous, strictly positive semimartingale.
It\^o's formula applied to $\log J^j$, together with
the definition of $\sigma^j$, yields
\eqref{eq:log-M-semimartingale} almost surely,
simultaneously for all $0\leq t \leq T_{j-1}$.
A finite intersection over $j$ completes the proof.
\end{proof}

\subsection{Consistency and asymptotic normality}

We estimate $\theta_0$ using a weighted marginal Gaussian
contrast based on the logarithmic increments of $J^j$.
For $j=2,\ldots,d$ and $0\leq t<T_{j-1}$, define
\begin{equation}
    \bar k_\theta^j(t)
    \coloneqq
    \frac{1}{\delta_j}
    \int_{T_{j-1}}^{T_j}k_\theta(u-t)\,du.
    \label{eq:averaged-kernel}
\end{equation}
For $i=1,\ldots,n$ and $j\in\Pi_i$, set
\begin{equation}
    X_{ij}
    \coloneqq
    \log J_{t_i}^j-\log J_{t_{i-1}}^j,\qquad
    q_{ij}(\theta)
    \coloneqq
    \int_{t_{i-1}}^{t_i}
    \bar k_\theta^j(s)^2\,ds.
\end{equation}
The contrast function is
\begin{equation}
    U_{n,d}(\theta)
    \coloneqq
    \frac1n
    \sum_{i=1}^n
    \sum_{j\in\Pi_i}
    \delta_j
    \left[
        \log\left(\frac{q_{ij}(\theta)}{\Delta_n}\right)
        +\frac{X_{ij}^2}{q_{ij}(\theta)}
        +\frac14q_{ij}(\theta)
    \right],
    \qquad \theta\in\Theta.
    \label{eq:contrast}
\end{equation}

For use in the approximation arguments, we also define
\begin{equation}
    \bar X_{ij}
    \coloneqq
    \int_{t_{i-1}}^{t_i}
    \bar k_{\theta_0}^j(s)\,dW_s
    -\frac12q_{ij}(\theta_0).
    \label{eq:gaussian-log-increment}
\end{equation}
Each $\bar X_{ij}$ has distribution
$\mathcal N(-q_{ij}(\theta_0)/2,q_{ij}(\theta_0))$.
Up to terms independent of $\theta$, the contrast
\eqref{eq:contrast} is twice the weighted negative
marginal Gaussian log-likelihood associated with this
approximation.

\begin{thm}[Uniform convergence of the contrast]
\label{thm:uniform-contrast}
Suppose that Assumptions~\ref{ass:K0}, \ref{ass:K1},
and~\ref{ass:V0} hold.
As $n,d\to\infty$,
the contrast $U_{n,d}$ converges uniformly in probability
on $\Theta$ to the deterministic function $U$ defined by
\begin{equation}
    U(\theta)
    \coloneqq
    \int_0^1
    (1-x)
    \left[
        \log\bigl(k_\theta(x)^2\bigr)
        +\frac{k_{\theta_0}(x)^2}{k_\theta(x)^2}
    \right]dx.
    \label{eq:limiting-contrast}
\end{equation}
\end{thm}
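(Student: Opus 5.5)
We split $U_{n,d}$ into a deterministic part and a random part. Write
\begin{equation}
U_{n,d}(\theta)=A_{n,d}(\theta)+B_{n,d}(\theta),
\qquad
A_{n,d}(\theta)=\frac1n\sum_{i}\sum_{j\in\Pi_i}\delta_j\Bigl[\log\frac{q_{ij}(\theta)}{\Delta_n}+\frac{q_{ij}(\theta)}4\Bigr],
\qquad
B_{n,d}(\theta)=\frac1n\sum_{i}\sum_{j\in\Pi_i}\delta_j\frac{X_{ij}^2}{q_{ij}(\theta)}.
\end{equation}
We show that $A_{n,d}(\theta)\to\int_0^1(1-x)\log k_\theta(x)^2dx$ uniformly in $\theta$. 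We show that $B_{n,d}(\theta)\to\int_0^1(1-x)k_{\theta_0}(x)^2/k_\theta(x)^2dx$ uniformly in probability.

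\textbf{Deterministic part.} Set $x=u-s$. Then $q_{ij}(\theta)/\Delta_n$ is a time-average of $\bar k^j_\theta(s)^2$ over $[t_{i-1},t_i]$. Assumption~\ref{ass:K1} gives $k_\theta\ge c$ and, after integration, $k_\theta(x)\le Cx^{\alpha-1/2}$ uniformly in $\theta$. These bounds make $\log(q_{ij}/\Delta_n)$ bounded below and at most $C(1+|\log(T_{j-1}-t_i)|)$, so it is integrable near the diagonal. The term $\sum\delta_j q_{ij}/n$ is $O(\Delta_n)$ because $\int\bar k^2\le C\int x^{2\alpha-1}$. Away from the diagonal, say $T_{j-1}-t_i\ge\epsilon$, Assumption~\ref{ass:K0} gives uniform continuity on $\Theta\times[\epsilon,1]$. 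The double sum is then a Riemann sum for $\iint_{0<s<u<1}\log k_\theta(u-s)^2\,ds\,du=\int_0^1(1-x)\log k_\theta(x)^2dx$, and the error vanishes as $n\to\infty$, $\Delta_d\to0$, $T_d\to1$. The strip with $T_{j-1}-t_i<\epsilon$ contributes $O(\epsilon|\log\epsilon|)$ uniformly, and we let $\epsilon\downarrow0$. Everything here is uniform in $\theta$ by compactness and Assumption~\ref{ass:K0}.

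\textbf{Random part: approximation.} We replace $X_{ij}$ by the Gaussian proxy $\bar X_{ij}$. By Proposition~\ref{prop:maturity-increment-representations},
\begin{equation}
X_{ij}-\bar X_{ij}=\int_{t_{i-1}}^{t_i}\bigl(\sigma^j(s)-\bar k^j_{\theta_0}(s)\bigr)dW_s-\frac12\int_{t_{i-1}}^{t_i}\bigl(\sigma^j(s)^2-\bar k^j_{\theta_0}(s)^2\bigr)ds.
\end{equation}
The difference $\sigma^j-\bar k^j_{\theta_0}$ is a covariance-type term: the $V_t^u$-weighted average of $k$ minus its plain average over $[T_{j-1},T_j]$. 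We bound it using the log-derivative bound in Assumption~\ref{ass:K1} together with the continuity in $u$ of $V_t^u/V_t^{T_{j-1}}$ (the moment and H\"older bounds of Lemma~\ref{OIdfgnoIAMdofigmmrDSK} and Assumption~\ref{ass:V0}). When $u-t\gg\delta_j$, the variation of $k(u-t)$ over the cell is relative, of order $\delta_j/(u-t)+\delta_j$, so the difference is $k$ times a small factor. Near the diagonal we only use $|\sigma^j|,\bar k^j\le C\bar k^j$-type bounds.

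Since $1/q_{ij}(\theta)\le C/\Delta_n$ uniformly in $\theta$, we get
\begin{equation}
E\sup_\theta|B_{n,d}(\theta)-\bar B_{n,d}(\theta)|\le C\frac1n\sum\delta_j\,\Delta_n^{-1}\,E\bigl[|X_{ij}-\bar X_{ij}|(|X_{ij}|+|\bar X_{ij}|)\bigr].
\end{equation}
Here $\bar B_{n,d}$ is $B_{n,d}$ with $\bar X_{ij}$ in place of $X_{ij}$. This bound tends to $0$ after the same $\epsilon$-splitting, because the near-diagonal cells have total weight $O(\epsilon)$ times integrable factors.

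\textbf{Random part: Gaussian proxy.} Write $\bar X_{ij}^2=q_{ij}(\theta_0)+\xi_{ij}$, where $\xi_{ij}$ is centered and the $\xi_{ij}$ are independent across $i$. The mean term $\sum\delta_j q_{ij}(\theta_0)/(nq_{ij}(\theta))$ converges uniformly to the stated integral by the Riemann argument above; the integrand $k_{\theta_0}^2/k_\theta^2\le Cx^{2\alpha-1}$ is integrable. For the fluctuation, the variance is
\begin{equation}
\frac1{n^2}\sum_i E\Bigl(\sum_j\delta_j\xi_{ij}/q_{ij}\Bigr)^2\le\frac1{n^2}\sum_i\Bigl(\sum_j\delta_j\bigl\|\xi_{ij}/q_{ij}\bigr\|_2\Bigr)^2=O(1/n),
\end{equation}
using Minkowski and $\|\xi_{ij}/q_{ij}\|_2\le C$. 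This gives pointwise convergence. For uniformity, Assumption~\ref{ass:K0} makes $\theta\mapsto 1/q_{ij}(\theta)$ equicontinuous in the normalized sense, so on the off-diagonal region the random sum is Lipschitz-like in $\theta$ with a tight modulus. A finite-net argument on compact $\Theta$, with the diagonal strip handled by sup bounds, then gives uniform convergence.

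\textbf{Main obstacle.} The hard step is the approximation $\sigma^j\approx\bar k^j_{\theta_0}$ near the maturity diagonal, where $k$ is singular. There we would rely only on crude integrable bounds and on the smallness of the strip's total weight, not on any rate.
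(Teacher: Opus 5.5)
Your argument is correct and has the same architecture as the paper's proof. You replace $X_{ij}$ by the Gaussian proxy $\bar X_{ij}$, then prove a uniform law of large numbers for the proxy contrast by truncating at distance $r$ from the maturity diagonal. Independence across $i$ gives an $O(1/n)$ variance, a finite net plus uniform continuity of $k$ on $\Theta\times[r,1]$ gives uniformity, a Riemann sum identifies the mean, and the integrable majorants $x^{2\alpha-1}$ and $|\log x|$ control the strip. Separating the deterministic terms $\log q_{ij}+q_{ij}/4$ is only cosmetic.

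The genuine difference is the replacement step. The paper proves the global bound $E\sup_{\theta}|U_{n,d}(\theta)-\bar U_{n,d}(\theta)|\le C\Delta_d^{\alpha\wedge\beta}$ from the identity $\sigma^j(t)-\bar k^j_{\theta_0}(t)=(J_t^j)^{-1}\int_{T_{j-1}}^{T_j}(V_t^u-V_t^{T_j})(k_{\theta_0}(u-t)-\bar k^j_{\theta_0}(t))\,du$. H\"older continuity of $V$ in $u$ yields a factor $\delta_j^{\alpha\wedge\beta}$. The kernel term yields $\delta_j/(T_j-t)$ in the far regime, and also in the near regime via the trivial bound $2\le 4\delta_j/(T_j-t)$. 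Since $T_j-t_i>\delta_j$ for $j\in\Pi_i$, the near-diagonal cells are not an obstacle at all. Your qualitative $\epsilon$-split is lighter and suffices for consistency. Off the diagonal you do not even need the continuity of $V$ in $u$ that you invoke: $\sigma^j(t)$ and $\bar k^j_{\theta_0}(t)$ are both probability averages of $k_{\theta_0}(\cdot-t)$ over $[T_{j-1},T_j]$, so their difference is pathwise bounded by the oscillation of $k_{\theta_0}(\cdot-t)$ on that cell. What the paper's quantitative version buys is reuse. The same relative bound $\delta_j^{1+\alpha\wedge\beta}/(T_j-t_i)$ drives the $\sqrt n$-scale score comparison in Lemma~\ref{lem:score-comparison}, which a rate-free argument cannot supply.

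Three local estimates in your sketch need correcting, although none changes the conclusion.

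First, $\log(q_{ij}/\Delta_n)\le C(1+|\log(T_{j-1}-t_i)|)$ evaluates the singular majorant at the near end of the cell. Then $\sum_{j}\delta_j|\log(T_{j-1}-t_i)|$ is not controlled by an integral, since a cell starting just above $t_i$ makes it arbitrarily large. Averaging the kernel over the cell gives $q_{ij}(\theta)/\Delta_n\le Ck_\theta(T_j-t_{i-1})^2\le C(T_j-t_{i-1})^{2\alpha-1}$, which uses the far endpoint. Comparison with $\int_{T_{j-1}}^{T_j}|\log(u-t_i)|\,du$ then works.

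Second, $\|\xi_{ij}/q_{ij}(\theta)\|_2\le C$ is false for $\theta\neq\theta_0$ near the diagonal, where $q_{ij}(\theta_0)/q_{ij}(\theta)$ can be of order $(T_j-t_{i-1})^{2\alpha-1}$. What holds, and suffices for the $O(1/n)$ variance, is $\sum_{j\in\Pi_i}\delta_j\|\xi_{ij}\|_2/q_{ij}(\theta)\le C\sum_{j\in\Pi_i}\delta_j q_{ij}(\theta_0)/\Delta_n\le C$.

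Third, $\xi_{ij}=\bar X_{ij}^2-q_{ij}(\theta_0)$ is not centered, because $E[\bar X_{ij}^2]=q_{ij}(\theta_0)+q_{ij}(\theta_0)^2/4$. The extra mean contributes at most $C\Delta_n^{2\alpha}$ uniformly in $\theta$, using $q_{ij}(\theta)\ge c\Delta_n$ and $q_{ij}(\theta_0)\le C\Delta_n^{2\alpha}$.
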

No additional restriction on the relative rates of the
time and maturity meshes is required.
The proof of Theorem~\ref{thm:uniform-contrast} is given
in Appendix~\ref{A2}.

Let $\hat\theta_{n,d}$ be any measurable minimizer of
$U_{n,d}$ over $\Theta$.

\begin{cor}[Consistency]
\label{cor:consistency}
In addition to the assumptions of
Theorem~\ref{thm:uniform-contrast}, suppose that
Assumption~\ref{ass:K2} holds.
Then $\hat\theta_{n,d}$ converges in
probability to $\theta_0$ as $n,d\to\infty$.
\end{cor}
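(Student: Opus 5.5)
The plan is the standard argmin argument for minimum contrast estimators: combine the uniform convergence of Theorem~\ref{thm:uniform-contrast} with a well-separated minimum of the limit $U$ at $\theta_0$. Since $\Theta$ is compact, it suffices to show three things: $U$ is continuous on $\Theta$, $\theta_0$ is its unique minimizer, and then apply the usual inequality. Fix $\varepsilon>0$ and set $\eta:=\inf\{U(\theta)-U(\theta_0)\mid \theta\in\Theta,\ |\theta-\theta_0|\ge\varepsilon\}$. If $\eta>0$, then on the event $\{|\hat\theta_{n,d}-\theta_0|\ge\varepsilon\}$ we have
\begin{equation}
    \eta\le U(\hat\theta_{n,d})-U(\theta_0)
    \le 2\sup_{\theta\in\Theta}|U_{n,d}(\theta)-U(\theta)|
    + U_{n,d}(\hat\theta_{n,d})-U_{n,d}(\theta_0),
\end{equation}
and the last difference is nonpositive because $\hat\theta_{n,d}$ minimizes $U_{n,d}$. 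The probability of this event therefore tends to zero by Theorem~\ref{thm:uniform-contrast}.

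\emph{Continuity of $U$.} By Assumption~\ref{ass:K1}, integrating $|\partial_x\log k_\theta|$ from $x$ to $1$ gives
\begin{equation}
    |\log k_\theta(x)-\log k_\theta(1)|\le (1/2-\alpha)|\log x|+C.
\end{equation}
Continuity of $k$ on the compact set $\Theta\times\{1\}$ (Assumption~\ref{ass:K0}) then yields two-sided bounds, uniform in $\theta$:
\begin{equation}
    c\le k_\theta(x)\le C' x^{\alpha-1/2}.
\end{equation}
Hence $\log(k_\theta(x)^2)$ is dominated by $C(1+|\log x|)$, and $k_{\theta_0}^2/k_\theta^2\le C x^{2\alpha-1}/c^2$; both bounds are integrable on $(0,1)$. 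Pointwise continuity in $\theta$ comes from Assumption~\ref{ass:K0}, so dominated convergence gives continuity of $U$. By compactness, the infimum defining $\eta$ is then attained whenever the set is nonempty.

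\emph{Unique minimizer.} Write $r_\theta(x):=k_{\theta_0}(x)^2/k_\theta(x)^2>0$. Then
\begin{equation}
    U(\theta)-U(\theta_0)=\int_0^1(1-x)\bigl[r_\theta(x)-1-\log r_\theta(x)\bigr]dx,
\end{equation}
which is legitimate since each piece is integrable by the bounds above. The function $r\mapsto r-1-\log r$ is nonnegative and vanishes only at $r=1$, and the weight $1-x$ is positive on $(0,1)$. So $U(\theta)=U(\theta_0)$ forces $k_\theta=k_{\theta_0}$ a.e.\ on $(0,1)$, and Assumption~\ref{ass:K2} gives $\theta=\theta_0$. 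Therefore $\eta>0$, since a continuous function attains its positive minimum on the compact set $\{\theta\in\Theta\mid|\theta-\theta_0|\ge\varepsilon\}$.

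The only mildly delicate step is the integrability and domination near $x=0$ for singular kernels. This is exactly what the bound $\alpha>0$ in Assumption~\ref{ass:K1} provides, since $x^{2\alpha-1}$ is integrable. Measurability of the minimizer is assumed in the statement, so nothing else is needed. Boundary true values pose no issue, because the argument uses no derivatives.
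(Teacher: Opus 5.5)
Your proposal is correct and follows essentially the same route as the paper. Both arguments obtain continuity of $U$ from the Assumption~\ref{ass:K1} bounds and dominated convergence, identify $\theta_0$ as the unique minimizer via $r-1-\log r\ge0$ and Assumption~\ref{ass:K2}, and conclude from $U(\hat\theta_{n,d})-U(\theta_0)\le2\sup_{\theta\in\Theta}|U_{n,d}(\theta)-U(\theta)|$ together with Theorem~\ref{thm:uniform-contrast}; you simply make explicit the well-separation constant $\eta>0$ that the paper's last sentence leaves implicit.
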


\begin{proof}
For each $n,d$, the contrast $U_{n,d}$ is continuous on
the compact set $\Theta$, so a measurable minimizer exists.
The kernel bounds in Assumption~\ref{ass:K1} and dominated
convergence also imply that $U$ is continuous on $\Theta$.

For every $\theta\in\Theta$,
\begin{align}
    U(\theta)-U(\theta_0)
    &=
    \int_0^1(1-x)
    \left[
        \log\left(
            \frac{k_\theta(x)^2}{k_{\theta_0}(x)^2}
        \right)
        +\frac{k_{\theta_0}(x)^2}{k_\theta(x)^2}
        -1
    \right]dx
    \geq0.
    \label{eq:limiting-contrast-identification}
\end{align}
Since $\log z+z^{-1}-1\geq0$ for $z>0$, with equality
if and only if $z=1$, equality in
\eqref{eq:limiting-contrast-identification} implies
$k_\theta=k_{\theta_0}$ almost everywhere on $(0,1)$.
Assumption~\ref{ass:K2} therefore makes $\theta_0$
the unique minimizer of $U$.

By the minimizing property of $\hat\theta_{n,d}$,
\begin{equation}
    0
    \leq
    U(\hat\theta_{n,d})-U(\theta_0)
    \leq
    2\sup_{\theta\in\Theta}
    |U_{n,d}(\theta)-U(\theta)|.
\end{equation}
The right-hand side converges to zero in probability by
Theorem~\ref{thm:uniform-contrast}.
By continuity of $U$, compactness of $\Theta$, and uniqueness
of its minimizer, the preceding bound implies that
$\hat\theta_{n,d}$ converges in probability to $\theta_0$.
\end{proof}

\begin{thm}[Stable asymptotic normality]
\label{thm:stable-asymptotic-normality}
Suppose that the assumptions of
Corollary~\ref{cor:consistency} hold.
Assume additionally that
$\theta_0\in\operatorname{Int}(\Theta)$ and that
Assumptions~\ref{ass:K3}--\ref{ass:K5} hold.
Suppose that, for some
$\varepsilon\in(0,\alpha\wedge\beta)$,
\begin{equation}
    \sqrt n\,
    \Delta_d^{1+\alpha\wedge\beta-\varepsilon}
    \longrightarrow0.
    \label{eq:maximal-mesh-condition}
\end{equation}
Define
\begin{align}
    B
    &\coloneqq
    4\int_0^1
    (1-x)
    \bigl(\nabla_\theta\log k_{\theta_0}(x)\bigr)
    \bigl(\nabla_\theta\log k_{\theta_0}(x)\bigr)^\top
    dx,
    \label{eq:limiting-hessian}
    \\
    D
    &\coloneqq
    8\int_0^1
    \left(
        \int_0^r
        \nabla_\theta\log k_{\theta_0}(x)dx
    \right)
    \left(
        \int_0^r
        \nabla_\theta\log k_{\theta_0}(x)dx
    \right)^\top
    dr,
    \label{eq:limiting-score-covariance}
    \\
    \Gamma
    &\coloneqq B^{-1}DB^{-1}.
    \label{eq:asymptotic-covariance}
\end{align}
Then $B$ and $D$ are positive definite, and $\sqrt n(\hat\theta_{n,d}-\theta_0)$
converges $\mathscr F_1$-stably in law to $Z\sim\mathcal N_q(0,\Gamma)$, as $n,d\to\infty$,
where $Z$ is defined on an extension of the original
probability space and is independent of $\mathscr F_1$.
\end{thm}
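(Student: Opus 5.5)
The proof follows the standard M-estimation route: a Taylor expansion of the score around $\theta_0$. By Corollary~\ref{cor:consistency} and $\theta_0\in\operatorname{Int}(\Theta)$, with probability tending to one $\hat\theta_{n,d}$ is an interior minimizer, so $\nabla_\theta U_{n,d}(\hat\theta_{n,d})=0$. Assumption~\ref{ass:K3} permits differentiation under the integral defining $q_{ij}$, and the mean value theorem then gives
\begin{equation}
    0=\sqrt n\,\nabla_\theta U_{n,d}(\theta_0)
    +\left(\int_0^1\nabla_\theta^2U_{n,d}(\theta_0+s(\hat\theta_{n,d}-\theta_0))ds\right)\sqrt n(\hat\theta_{n,d}-\theta_0).
\end{equation}
It therefore suffices to prove three things. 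First, $\nabla^2_\theta U_{n,d}\to 2B$ uniformly in probability on shrinking neighbourhoods of $\theta_0$. Second, $\sqrt n\,\nabla_\theta U_{n,d}(\theta_0)\to\mathcal N(0,4D)$ $\mathscr F_1$-stably. Third, $B$ and $D$ are positive definite. Slutsky's lemma for stable convergence then yields $\Gamma=(2B)^{-1}4D(2B)^{-1}=B^{-1}DB^{-1}$.

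\textbf{Positive definiteness and the Hessian.} Write $g=\nabla_\theta\log k_{\theta_0}$. Then $y^\top By=4\int_0^1(1-x)(y^\top g)^2dx$, which is positive for $y\neq0$ by Assumption~\ref{ass:K5}. Likewise $y^\top Dy=8\int_0^1(\int_0^r y^\top g)^2dr$; if this vanished, then $\int_0^r y^\top g=0$ for every $r$, so $y^\top g=0$ a.e. and again $y=0$. For the Hessian, note that $\nabla_\theta\log q_{ij}=\nabla q_{ij}/q_{ij}$. Each summand of $\nabla^2U_{n,d}$ is a combination of $\nabla^2\log q_{ij}$, $(1-X_{ij}^2/q_{ij})$ and $X_{ij}^2/q_{ij}$, plus a negligible $\nabla^2 q_{ij}/4$ term of order $\Delta_n$. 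Replacing $X_{ij}^2$ by its conditional expectation, which is approximately $q_{ij}(\theta_0)$, I expect the argument of Theorem~\ref{thm:uniform-contrast} to carry over once Assumption~\ref{ass:K4} is used to control the sub-polynomial growth of $\nabla\log k$ near the diagonal. At $\theta=\theta_0$ this yields the Riemann sum $\frac1n\sum_i\sum_j\delta_j\,2(\nabla\log\bar k^j)(\nabla\log\bar k^j)^\top$, which converges to $4\int_0^1(1-x)gg^\top dx\cdot\frac12\cdot 2=2B$. The double integral over $\{t<u\}$ collapses to the weight $(1-x)$ in the variable $x=u-t$. Continuity in $\theta$ together with consistency handles the mean-value points.

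\textbf{The score.} We have
\begin{equation}
    \nabla_\theta U_{n,d}(\theta_0)=\frac1n\sum_i\sum_{j\in\Pi_i}\delta_j\frac{\nabla q_{ij}}{q_{ij}}\Bigl(1-\frac{X_{ij}^2}{q_{ij}}+\frac{q_{ij}}4\Bigr).
\end{equation}
I would split $X_{ij}=\bar X_{ij}+R_{ij}$, where $R_{ij}=\int(\sigma^j-\bar k^j_{\theta_0})dW-\frac12\int(\sigma_j^2-(\bar k^j)^2)$ comes from Proposition~\ref{prop:maturity-increment-representations}. For the Gaussian part, $1-\bar X_{ij}^2/q_{ij}+q_{ij}/4$ has mean zero exactly, which is why the $q/4$ term appears. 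Its leading term is $-(\xi_{ij}^2-1)+O(\sqrt{\Delta_n})\xi_{ij}$, with $\xi_{ij}$ approximately $\Delta_n^{-1/2}\int\bar k^j dW/\sqrt{q_{ij}}$. Since $\bar k^j>0$, the $\xi_{ij}$ are nearly perfectly correlated across $j$ away from the diagonal, so $\xi_{ij}\approx\zeta_i:=\Delta_n^{-1/2}(W_{t_i}-W_{t_{i-1}})$. The score is then approximately a martingale-difference sum $-\frac1{\sqrt n}\sum_i(\zeta_i^2-1)\,2\sum_{j\in\Pi_i}\delta_j\nabla\log\bar k^j(t_{i-1})$, where the inner sum tends to $2\int_0^{1-t}g(x)dx$. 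The conditional variance is therefore $\frac1n\sum_i 2\cdot4(\int_0^{1-t_i}g)^2\to 8\int_0^1(\int_0^r g)^2dr=D$, after scaling by the factor $4$ from $(2)^2$. Orthogonality to $W$ holds because third moments vanish, and orthogonality to martingales orthogonal to $W$ is trivial. Jacod's stable CLT for triangular arrays then gives $\mathscr F_1$-stable convergence; one checks this normalization against the factor $4D$ claimed above. The near-diagonal blocks, where $t_i$ is close to $T_{j-1}$ and $\bar k^j$ varies within $[t_{i-1},t_i]$, contribute total weight $O(\sqrt n\cdot\Delta_n)$ times integrable singular factors, which vanish thanks to Assumptions~\ref{ass:K1} and~\ref{ass:K4}.

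\textbf{Main obstacle: the remainder.} The hardest step is showing that the contribution of $R_{ij}$ is $o_P(n^{-1/2})$. The error $\sigma^j-\bar k^j_{\theta_0}$ is a covariance-type term: the within-bin average of $k(u-t)$ against the fluctuation of $V_t^u/J_t^j$ in $u$. I would bound it by $\Delta_d^{\beta}$-type Hölder regularity of $u\mapsto V_t^u$, which is inherited from Assumption~\ref{ass:V0} and from the kernel through Lemma~\ref{OIdfgnoIAMdofigmmrDSK}, multiplied by the oscillation of $k$ over a bin, $\delta_j\,|\partial_x k|\lesssim \delta_j k/x$ by Assumption~\ref{ass:K1}. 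The cross term $2\bar X_{ij}R_{ij}$ enters linearly with weight $\sqrt n$. The martingale part of $R_{ij}$ is first order, but its product with $\bar X_{ij}$ has a conditional mean of order $\Delta_n\,\bar k(\sigma-\bar k)$, which after summation is of order $\sqrt n\,\Delta_d^{1+\alpha\wedge\beta-\varepsilon}$. This is precisely condition \eqref{eq:maximal-mesh-condition}, with the $\varepsilon$ loss absorbing the functions $L_\ell$ and the diagonal singularity. The fluctuation parts are handled by Burkholder inequalities. I would first attempt this bound bin by bin, using moment estimates for $V_t^u/V_t^{T_{j-1}}-1$.
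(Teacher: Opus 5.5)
Your architecture matches the paper's proof. It uses the integral Taylor expansion on $\{\hat\theta_{n,d}\in\operatorname{Int}(\Theta)\}$, a Hessian law of large numbers along the segment, a Jacod stable CLT for the score of the Gaussian proxy $\bar X_{ij}$, and a transfer from $X_{ij}$ to $\bar X_{ij}$ at rate $\Delta_d^{1+\alpha\wedge\beta-\varepsilon}$, which is exactly where \eqref{eq:maximal-mesh-condition} enters. Positive definiteness comes from Assumption~\ref{ass:K5}.

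\textbf{Normalization error.} The Hessian converges to $B$, not $2B$, and $\sqrt n\,\nabla_\theta U_{n,d}(\theta_0)$ converges to $\mathcal N_q(0,D)$, not $\mathcal N_q(0,4D)$. Indeed, $E[\nabla_\theta^2\bar U_{n,d}(\theta_0)]=\frac1n\sum_i\sum_{j\in\Pi_i}\delta_j(1+\frac12q_{ij})\nabla_\theta\log q_{ij}\nabla_\theta\log q_{ij}^\top$. Since $\nabla_\theta\log q_{ij}\approx2\nabla_\theta\log\bar k^j_{\theta_0}$ enters squared, your Riemann sum needs coefficient $4$, not $2$. It then tends to $4\int_0^1(1-x)gg^\top dx=B$, with your $g=\nabla_\theta\log k_{\theta_0}$. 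Your chain $4\int_0^1(1-x)gg^\top dx\cdot\frac12\cdot2$ also equals $B$, and your own conditional-variance computation in the score paragraph yields $D$. The two slips are factors $2$ and $2^2$, so they cancel in $B^{-1}DB^{-1}$ and the conclusion survives. Still, both intermediate claims as written are false.

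\textbf{Remainder.} You do not need to split $2\bar X_{ij}R_{ij}$ into a conditional mean plus a Burkholder-controlled fluctuation. Lemma~\ref{lem:score-comparison} is a plain first-moment bound. It combines $|(X_{ij}^2-\bar X_{ij}^2)/q_{ij}(\theta_0)|_1\le C\delta_j^{1+\alpha\wedge\beta}/(T_j-t_i)$ from Lemma~\ref{lemA5} with $|\nabla_\theta\log q_{ij}(\theta_0)|\le C(T_j-t_{i-1})^{-\varepsilon/2}$ from Lemma~\ref{lemA2}. It then spends $\delta_j^{\varepsilon}\le(T_j-t_i)^{\varepsilon}$ so that $\sum_{j\in\Pi_i}\delta_j(T_j-t_i)^{-1+\varepsilon/2}\le2/\varepsilon$; this is where the $\varepsilon$ loss goes. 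Your finer split gains nothing, because the conditional mean has no reason to cancel across $i$.

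The coefficient estimate comes from the doubly centred identity $\sigma^j(t)-\bar k^j_{\theta_0}(t)=(J_t^j)^{-1}\int_{T_{j-1}}^{T_j}(V_t^u-V_t^{T_j})(k_{\theta_0}(u-t)-\bar k^j_{\theta_0}(t))\,du$, together with negative moments of $J_t^j/\delta_j$. Your oscillation bound $\delta_j|\partial_xk|\lesssim\delta_jk/x$ fails for the bin adjacent to the diagonal, where $T_{j-1}-t<\delta_j$. There the paper uses the trivial bound $2\le4\delta_j/(T_j-t)$ instead.

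\textbf{CLT.} The paper does not collapse all maturities onto $\zeta_i$ and then count near-diagonal errors. It applies Jacod's theorem to the exact proxy sum, whose predictable covariation is the deterministic matrix $\mathcal D_{n,d}(t,\theta_0)$ with weights $2(q_i^{j\ell})^2/(q_{ij}q_{i\ell})+q_i^{j\ell}$. It proves $\mathcal D_{n,d}(t,\theta_0)\to D(t,\theta_0)$ by truncating at $T_{j-1}-t_i\ge r$, with the band controlled by $\sum_{j\in\Pi_i\setminus\Pi_i^r}\delta_j(T_j-t_{i-1})^{\alpha-1}\le C(r+\Delta_d)^\alpha$.

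Also, the linear term you discard is $\sqrt{q_{ij}}\,\xi_{ij}$. Near the diagonal it can be as large as $\Delta_n^\alpha$, not $O(\sqrt{\Delta_n})$, and it is exactly the part not orthogonal to $W$. The paper bounds its total covariation with the Brownian increments by $C\Delta_n^\alpha$.
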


The proof of Theorem~\ref{thm:stable-asymptotic-normality}
is given in Appendix~\ref{A3}.

\begin{rem}
\label{rem:power-law-mesh-condition}
Suppose that $T_j=j/d$ and that the true kernel is
$k_{\theta_0}(x)=\eta_0x^{H_0-1/2}$, where
$\eta_0>0$ and $H_0\in(0,1/2)$.
If $d\asymp n^\gamma$, the mesh condition
\eqref{eq:maximal-mesh-condition} in the asymptotic
normality theorem may be replaced by
\begin{equation}
    \gamma>
    \frac{1}{2(1+H_0\wedge\beta)}.
    \label{eq:power-law-grid-exponent}
\end{equation}
Under this condition and the remaining assumptions of
the theorem, its stable asymptotic normality conclusion
holds.
The sharper condition follows by applying the
true-kernel refinement in
Remark~\ref{rem:true-power-law-estimates}
in the proof of Lemma~\ref{lem:score-comparison}.
\end{rem}

\begin{rem}
\label{rem:equivalent-measures}
In the financial interpretation of the model, $P$ is
an equivalent martingale measure under which the forward
variance dynamics are specified.
Let $P^*$ be any fixed probability measure equivalent to
$P$ on $\mathscr F_1$.

Convergence in probability is preserved under this change
of measure. Hence the uniform convergence in
Theorem~\ref{thm:uniform-contrast} and the consistency in
Corollary~\ref{cor:consistency} also hold under $P^*$.

The same argument as in the proof of Proposition~1
of \cite{MyklandZhang2009} shows that the conclusion of
Theorem~\ref{thm:stable-asymptotic-normality}
also holds under $P^*$.
The limiting vector remains independent of $\mathscr F_1$
and, since $\Gamma$ is deterministic, has the same
distribution $\mathcal N_q(0,\Gamma)$.
\end{rem}

\begin{rem}
\label{rem:power-law-asymptotic-covariance}
Consider the two-parameter family
\begin{equation}
    k_\theta(x)=\eta x^{H-1/2},
    \qquad
    \theta=(\eta,H),
    \qquad
    \eta>0,\quad H\in(0,1/2).
\end{equation}
At the true parameter $\theta_0=(\eta_0,H_0)$,
\begin{equation}
    \nabla_\theta\log k_{\theta_0}(x)
    =
    \begin{pmatrix}
        \eta_0^{-1}\\
        \log x
    \end{pmatrix}.
\end{equation}
Substitution into
\eqref{eq:limiting-hessian} and
\eqref{eq:limiting-score-covariance} gives
\begin{equation}
    B
    =
    \begin{pmatrix}
        2/\eta_0^2 & -3/\eta_0\\
        -3/\eta_0 & 7
    \end{pmatrix},\qquad
    D
    =
    \begin{pmatrix}
        8/(3\eta_0^2) & -32/(9\eta_0)\\
        -32/(9\eta_0) & 136/27
    \end{pmatrix}.
\end{equation}
Therefore,
\begin{equation}
    \Gamma
    =
    B^{-1}DB^{-1}
    =
    \begin{pmatrix}
        16\eta_0^2/15 & 8\eta_0/45\\
        8\eta_0/45 & 8/135
    \end{pmatrix}.
    \label{eq:power-law-asymptotic-covariance}
\end{equation}
\end{rem}
\appendix
\section{Auxiliary estimates for the kernel and the forward variance curve}
\label{A1}
Throughout this appendix, assumptions \ref{ass:K0}–\ref{ass:K1} are in force. We collect the kernel comparison, moment, and approximation estimates used in the proofs of consistency and asymptotic normality. The exponents $\alpha$ and $\beta$ are those appearing in \ref{ass:K1} and \ref{ass:V0}, respectively. Additional assumptions, including V0 and \ref{ass:K3}-\ref{ass:K4}, are stated explicitly in the lemmas where they are needed. Neither identifiability \ref{ass:K2} nor local nondegeneracy \ref{ass:K5} is required for these auxiliary estimates.

Throughout the proofs, constants appearing in inequality estimates
are redefined as needed, with $c>0$ and $C >0$ used for lower
and upper bounds, respectively. Unless otherwise stated, these
constants are independent of the parameter, the observation grids,
and the time and maturity variables.

\begin{lem}
\label{lemA1}
    There exist constants $c,C >0$ such that the following assertions hold.
    \begin{enumerate}
        \item For all $\theta \in \Theta$,
        \begin{equation}c \leq k_\theta(x) \leq  Cx^{\alpha-1/2}, \qquad |\partial_x k_\theta(x)| \leq C x^{\alpha-3/2},\qquad x \in (0,1].\end{equation}
        \item
    For all $\theta\in\Theta$ ,
    \begin{equation}
        \int_s^t k_\theta(u-r)^2dr
        \le C|t-s|^{2\alpha},\qquad 0\le s\le t\le u\le1.
    \end{equation}
    Moreover,
    \begin{equation}
        \int_0^t
        |k_\theta(u-r)-k_\theta(v-r)|^2dr
        \le C|u-v|^{2\alpha},\qquad 0\le t\le u\wedge v, \quad u,v \leq 1.
    \end{equation}
        \item For all $\theta \in \Theta$,\ $i$, and  $j \in \Pi_i$,
        \begin{equation}ck_\theta(T_j - t_{i-1})^2 \leq \frac{q_{ij}(\theta)}{\Delta_n} \leq Ck_\theta(T_j-t_{i-1})^2.\end{equation}
        In particular,
        \begin{equation}
            c \Delta_n \leq q_{ij}(\theta) \leq C\Delta_n^{2\alpha}, \qquad \frac{q_{ij}(\theta_0)}{q_{ij}(\theta)} \leq C(T_j - t_{i-1})^{2\alpha -1}.\end{equation}
        Moreover, uniformly in $i$,
            \begin{equation}\sup_{\theta \in \Theta} \sum_{j \in \Pi_i} \delta_j \frac{q_{ij}(\theta)}{\Delta_n} \leq C.
            \end{equation}
    \end{enumerate}
\end{lem}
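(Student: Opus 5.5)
The plan is to obtain everything from Assumption~\ref{ass:K1} by integrating the logarithmic derivative bound, and then to average over the maturity cell and the time interval.

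\emph{Part (i).} For $x\in(0,1]$, integrating $|\partial_x\log k_\theta|\le (1/2-\alpha)/y+C$ from $x$ to $1$ gives
\begin{equation}
    \log k_\theta(x)\le \log k_\theta(1)+(1/2-\alpha)\log(1/x)+C .
\end{equation}
Continuity of $k$ on the compact set $\Theta\times\{1\}$ (Assumption~\ref{ass:K0}) bounds $k_\theta(1)$ uniformly in $\theta$. Hence $k_\theta(x)\le Cx^{\alpha-1/2}$, and the lower bound $c$ is part of \ref{ass:K1}. The derivative bound follows from $|\partial_xk_\theta|=k_\theta|\partial_x\log k_\theta|\le Cx^{\alpha-1/2}(x^{-1}+1)\le Cx^{\alpha-3/2}$.

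\emph{Part (ii).} The first estimate is
\begin{equation}
    \int_s^t k_\theta(u-r)^2\,dr\le C\int_{u-t}^{u-s}y^{2\alpha-1}\,dy\le C\,(t-s)^{2\alpha},
\end{equation}
where the last step uses subadditivity of $y\mapsto y^{2\alpha}$ for $2\alpha\le1$.

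For the second estimate, assume $u<v$ and set $h=v-u$. Put $y=u-r\in[u-t,u]$; since $t\le u$, we have $y\ge0$. Split the integral according to $y\le h$ or $y>h$.
\begin{itemize}[label={}]
\item On $y\le h$, bound $|k(y)-k(y+h)|^2\le 2k(y)^2+2k(y+h)^2\le Cy^{2\alpha-1}$. This contributes $Ch^{2\alpha}$.
\item On $y>h$, use the mean value theorem with the derivative bound from (i): $|k(y)-k(y+h)|\le Ch\,y^{\alpha-3/2}$. This contributes $Ch^2\int_h^\infty y^{2\alpha-3}\,dy\le Ch^{2\alpha}$.
\end{itemize}
If $\alpha=1/2$, the derivative bound is $Cx^{-1}$, and the same computation still gives $Ch$. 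Here $y+h\le v\le1$, so all bounds stay inside $(0,1]$.

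\emph{Part (iii).} For $j\in\Pi_i$ and $s\in[t_{i-1},t_i]$, every $u\in[T_{j-1},T_j]$ satisfies $u-s\ge T_{j-1}-t_i>0$. The main obstacle is comparing $k_\theta(u-s)$ with $k_\theta(T_j-t_{i-1})$ uniformly, even when the cell sits near the diagonal, so that $u-s$ can be tiny compared with $\delta_j$ or $\Delta_n$.

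The tool is again the log-derivative bound. For $0<x\le y\le1$ it gives
\begin{equation}
    k_\theta(y)/k_\theta(x)\in\bigl[C^{-1}(x/y)^{1/2-\alpha},\,C(y/x)^{1/2-\alpha}\bigr].
\end{equation}
This controls the ratio only by a power of $y/x$, so a pointwise comparison fails near the diagonal. I will therefore compare integrals instead.

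\emph{Upper bound.} First, $\bar k^j_\theta(s)\le C\delta_j^{-1}\int_{T_{j-1}-s}^{T_j-s}y^{\alpha-1/2}\,dy$, and this must be compared with $(T_j-t_{i-1})^{\alpha-1/2}$. A cleaner route is to bound $k_\theta(y)\le C k_\theta(z)(z/y)^{1/2-\alpha}$ for $y\le z:=T_j-t_{i-1}$; note $z\ge y$ always holds. The average of $(z/y)^{1/2-\alpha}$ over the relevant set in the $(u,s)$ rectangle should then be shown bounded.

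I expect this averaging to be the delicate point. It needs $z\lesssim$ the extent of the rectangle in $y=u-s$. That holds because $z=(T_j-s)+(s-t_{i-1})$ is at most the sum of the two side lengths beyond the minimum of $y$, and $y^{-(1/2-\alpha)}$ is integrable. So $\int\!\!\int y^{\alpha-1/2}\,du\,ds\le C\delta_j\Delta_n z^{\alpha-1/2}$, obtained by integrating $y^{\alpha-1/2}$ over a region whose extent is comparable to $z$.

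\emph{Lower bound.} By Cauchy--Schwarz, $q_{ij}\ge\Delta_n^{-1}\bigl(\int\bar k\,ds\bigr)^2$. On the half of the rectangle where $y\ge z/4$, say, $k_\theta(y)\ge c\,k_\theta(z)$; this half carries a fixed proportion of the mass when $z$ is comparable to $\delta_j+\Delta_n$. In the other case $z\approx y$ throughout, and the comparison is direct.

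\emph{Consequences.} With the two-sided comparison established, (i) gives $c\le k_\theta(z)^2\le Cz^{2\alpha-1}$ with $z\ge\Delta_n$, hence $c\Delta_n\le q_{ij}\le C\Delta_n^{2\alpha}$. The ratio bound follows from
\begin{equation}
    k_{\theta_0}(z)^2/k_\theta(z)^2\le Cz^{2\alpha-1}/c.
\end{equation}
Finally,
\begin{equation}
    \sum_{j\in\Pi_i}\delta_j k_\theta(T_j-t_{i-1})^2\le C\sum_{j}\delta_j(T_j-t_{i-1})^{2\alpha-1}\le C\int_0^1 x^{2\alpha-1}\,dx<\infty,
\end{equation}
comparing the sum with the integral using monotonicity of $x^{2\alpha-1}$, since $T_j-t_{i-1}\ge x$ on $[T_{j-1}-t_{i-1},T_j-t_{i-1}]$.
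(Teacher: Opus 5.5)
\textbf{Verdict.} Your parts (i), (ii) and the consequences drawn in (iii) are fine and essentially follow the paper. For the first bound in (ii), you use the pointwise estimate $k_\theta\le Cx^{\alpha-1/2}$ and subadditivity of $y\mapsto y^{2\alpha}$ instead of the paper's averaged comparison, which is equally valid. Your lower bound in (iii) also works: with $a:=T_{j-1}-t_i$, the law of $y=u-s$ on the rectangle is symmetric about $(a+z)/2\ge z/2$, so $\{y\ge z/4\}$ always carries at least half the area.

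\textbf{The gap: upper bound in (iii).} The problem is the upper half of $q_{ij}(\theta)/\Delta_n\le Ck_\theta(T_j-t_{i-1})^2$. You propose to show $\iint y^{\alpha-1/2}\,du\,ds\le C\delta_j\Delta_n z^{\alpha-1/2}$, i.e.\ that the rectangle average of $(z/y)^{1/2-\alpha}$ is bounded. That only controls $\int_{t_{i-1}}^{t_i}\bar k_\theta^j(s)\,ds$, whereas $q_{ij}$ integrates $\bar k_\theta^j(s)^2$. A first-moment bound gives no upper bound on the second moment; Cauchy--Schwarz goes the other way, which is exactly why it serves your lower bound. The fix is to use $\bar k_\theta^j(s)^2\le\delta_j^{-1}\int_{T_{j-1}}^{T_j}k_\theta(u-s)^2\,du$ and then bound the rectangle average of $(z/y)^{1-2\alpha}$. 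This still works because $2\alpha-1>-1$.

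\textbf{The justification also fails off the diagonal.} Your reason (``$z\lesssim$ the extent of the rectangle in $y$'') is false away from the diagonal. If $a\gg\delta_j+\Delta_n$, then $z=a+\delta_j+\Delta_n$ is much larger than the side lengths, and the bound must come instead from $y\ge a\ge z/2$ pointwise. So a case split is needed, which you carried out only for the lower bound:
\begin{itemize}
\item If $a\ge z/2$, use $y^{2\alpha-1}\le (z/2)^{2\alpha-1}$ directly.
\item If $a<z/2$, then $z<4\max(\delta_j,\Delta_n)$. Each line $u-s=y$ meets the rectangle in an $s$-interval of length at most $\min(\delta_j,\Delta_n)$, so $\iint y^{2\alpha-1}\,du\,ds\le \min(\delta_j,\Delta_n)\,z^{2\alpha}/(2\alpha)\le C\delta_j\Delta_n z^{2\alpha-1}$.
\end{itemize}

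\textbf{How the paper avoids this.} It uses one rescaling that removes the case distinction and handles squares directly. Write the cell as $r+\delta z$ with $z\in(0,1]$. The inequality $z\le (r+\delta z)/(r+\delta)\le 1$ and the log-derivative bound give $cz^{1/2-\alpha}\le k_\theta(r+\delta z)/k_\theta(r+\delta)\le Cz^{\alpha-1/2}$, uniformly in the distance $r$ from the diagonal. The right side is square integrable, so $ck_\theta(r+\delta)^2\le\int_0^1k_\theta(r+\delta z)^2\,dz\le Ck_\theta(r+\delta)^2$. The paper applies this twice. First, in maturity (with Cauchy--Schwarz for the upper bound), it compares $\bar k_\theta^j(t)$ with $k_\theta(T_j-t)$. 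Then, in time, it compares $\Delta_n^{-1}\int_{t_{i-1}}^{t_i}k_\theta(T_j-s)^2\,ds$ with $k_\theta(T_j-t_{i-1})^2$. Your single two-dimensional averaging can be made to work with the corrections above, but the paper's two one-dimensional steps are cleaner.
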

\begin{proof}
    (i) For any $0 <x \leq y \leq 1$, Assumption \ref{ass:K1} yields
    \begin{equation}|\log k_\theta(x) - \log k_\theta(y)| \leq \int_x^y |\partial_x \log k_\theta(z)|dz \leq \left(\frac12 - \alpha\right) \log \left(\frac yx\right) + C(y-x).\end{equation}
    Consequently, 
    \begin{equation}
    \label{ASOSIGrndfogimOAINOWEIFNOSFI}
        c \left(\frac xy\right)^{1/2-\alpha} \leq \frac{k_\theta(x)}{k_\theta(y)} \leq C \left(\frac{y}{x}\right)^{1/2-\alpha}.
    \end{equation} 
    By \ref{ass:K0} and the compactness of $\Theta$, $\sup_{\theta \in \Theta} k_\theta(1)<\infty$. Taking $y=1$ and combining the resulting upper bound with the lower bound in \ref{ass:K1} proves the first assertion.
    Moreover, $|\partial_x k_\theta(x)| = k_\theta(x)|\partial_x \log k_\theta(x)| \leq Cx^{\alpha - 3/2}$. 

    (ii)
Let $r\ge0$ and $\delta>0$ satisfy $y\coloneqq r +\delta\le1$.
Since $z\le(r+\delta z)/y\le1$ for $z\in(0,1]$,
\eqref{ASOSIGrndfogimOAINOWEIFNOSFI} gives
\begin{equation}
\label{OSIDNFOSIRgnoiNAOSINSORIFGNSODI}
    c z^{1/2-\alpha}
    \le
    \frac{k_\theta(r+\delta z)}{k_\theta(y)}
    \le
    C z^{\alpha-1/2},\qquad z \in (0,1].
\end{equation}
Squaring and integrating, using $\alpha>0$, yields
\begin{equation}
\label{eq:kernel-average}
    c k_\theta(y)^2
    \le
    \int_0^1 k_\theta(r+\delta z)^2dz
    \le
    C k_\theta(y)^2.
\end{equation}

For $0 \leq s<t \leq u \leq 1$, applying \eqref{eq:kernel-average} with
$r=u-t$ and $\delta=t-s$, and then using (i), gives
\begin{equation}
       \int_s^t k_\theta(u-r)^2dr
    \le C(t-s)k_\theta(u-s)^2
    \le C(t-s)(u-s)^{2\alpha-1}
    \le C(t-s)^{2\alpha}, 
\end{equation}
since $u-s\ge t-s$ and $\alpha\le 1/2$.
The case $s=t$ is immediate.

By symmetry
assume $v\le u$ and set $h=u-v$.
The cases $h=0$ and $h=1$ are immediate.
For $0<h<1$ and $0<x\le1-h$, (i) and the
mean value theorem imply
\begin{equation}
    |k_\theta(x+h)-k_\theta(x)|
    \le
    \begin{cases}
        Cx^{\alpha-1/2}, & x<h,\\
        Chx^{\alpha-3/2}, & x\ge h.
    \end{cases}   
\end{equation}
Since $[v-t,v]\subset[0,1-h]$, it follows that
\begin{align}
    \int_0^t
    |k_\theta(u-r)-k_\theta(v-r)|^2\,dr
    &=
    \int_{v-t}^{v}
    |k_\theta(x+h)-k_\theta(x)|^2\,dx \\
    &\le
    C\int_0^h x^{2\alpha-1}\,dx
    +Ch^2\int_h^1 x^{2\alpha-3}\,dx \\
    &\le Ch^{2\alpha}.
\end{align}

(iii)
For $0\le t<T_{j-1}$, applying
\eqref{OSIDNFOSIRgnoiNAOSINSORIFGNSODI}
with $r=T_{j-1}-t$ and $\delta=\delta_j$ yields
\begin{equation}
      \bar k_\theta^j(t)
    \ge
    c k_\theta(T_j-t)
    \int_0^1 z^{1/2-\alpha}\,dz
    \ge c k_\theta(T_j-t).  
\end{equation}
The Cauchy--Schwarz inequality and
\eqref{eq:kernel-average} give the corresponding upper bound.
Thus,
\begin{equation}
\label{WOIEfndorginoIAMOSINfg}
    c k_\theta(T_j-t)
    \le \bar k_\theta^j(t)
    \le C k_\theta(T_j-t).
\end{equation}
Squaring this estimate and integrating over
$(t_{i-1},t_i)$ yields
\begin{equation}
    c\int_{t_{i-1}}^{t_i}k_\theta(T_j-s)^2ds
    \le q_{ij}(\theta)
    \le
    C\int_{t_{i-1}}^{t_i}k_\theta(T_j-s)^2ds.
\end{equation}
Applying \eqref{eq:kernel-average} with
$r=T_j-t_i$, and $\delta=\Delta_n$ gives
\begin{equation}
\label{SDOIGnedorigoIANOISNFOGI}
    c k_\theta(T_j-t_{i-1})^2
    \le
    \frac1{\Delta_n}
    \int_{t_{i-1}}^{t_i}k_\theta(T_j-s)^2ds
    \le
    C k_\theta(T_j-t_{i-1})^2.
\end{equation}
Combining the last two displays proves the comparison
bound for $q_{ij}(\theta)/\Delta_n$.

Since $T_j-t_{i-1}\ge\Delta_n$ and $\alpha\le1/2$,
the preceding comparison and (i) give
    $q_{ij}(\theta)
    \le C\Delta_n(T_j-t_{i-1})^{2\alpha-1}
    \le C\Delta_n^{2\alpha}$.
The lower bound in Assumption~\ref{ass:K1} gives
$q_{ij}(\theta)\ge c\Delta_n$.
Consequently,
\begin{equation}
    \frac{q_{ij}(\theta_0)}{q_{ij}(\theta)}
    \le
    \frac{C\Delta_n(T_j-t_{i-1})^{2\alpha-1}}
         {c\Delta_n}
    \le C(T_j-t_{i-1})^{2\alpha-1}.
\end{equation}

Finally,
\begin{equation}
\label{OISNDogifdfgoiANOISD}
    \sum_{j\in\Pi_i}
    \delta_j\frac{q_{ij}(\theta)}{\Delta_n}
    \le
    C\sum_{j\in\Pi_i}
    \delta_j(T_j-t_{i-1})^{2\alpha-1}.
\end{equation}
Since $2\alpha-1\le0$,
\begin{equation}
    \delta_j(T_j-t_{i-1})^{2\alpha-1}
    \le
    \int_{T_{j-1}}^{T_j}
    (u-t_{i-1})^{2\alpha-1}du.    
\end{equation}
The maturity intervals have disjoint interiors and,
for $j\in\Pi_i$, are contained in $(t_{i-1},1]$.
Consequently,
\begin{equation}
      \sum_{j\in\Pi_i}
    \delta_j(T_j-t_{i-1})^{2\alpha-1}
    \le
    \int_{t_{i-1}}^1
    (u-t_{i-1})^{2\alpha-1}du
    \le C.  
\end{equation}
Combining this with \eqref{OISNDogifdfgoiANOISD}
completes the proof.
\end{proof}

\begin{lem}
\label{lemA2}
    Assume additionally that Assumptions \ref{ass:K3}–\ref{ass:K4} hold. For each $i$ and $j\in\Pi_i$, the function $q_{ij}$, defined using the extension in \ref{ass:K3}, is twice continuously differentiable on $\mathcal O$. Moreover, for every $\varepsilon>0$, there exists a constant $C>0$ such that
\begin{equation}
|\nabla_\theta\log q_{ij}(\theta)|
+|\nabla_\theta^2\log q_{ij}(\theta)|
\le C(T_j-t_{i-1})^{-\varepsilon}
\end{equation}
for all $\theta\in\Theta$, $i$, and $j\in\Pi_i$.
\end{lem}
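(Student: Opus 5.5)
We need $C^2$ regularity of $q_{ij}$ on $\mathcal O$ and bounds on the log-derivatives. Write
\begin{equation}
q_{ij}(\theta)=\int_{t_{i-1}}^{t_i}\bar k_\theta^j(s)^2ds,\qquad \bar k_\theta^j(s)=\frac1{\delta_j}\int_{T_{j-1}}^{T_j}k_\theta(u-s)du .
\end{equation}
For $j\in\Pi_i$ and $s\le t_i<T_{j-1}$ we have $u-s\ge T_{j-1}-t_i>0$. So only arguments in a compact subinterval $[T_{j-1}-t_i,1]$ of $(0,\infty)$ appear, and the singularity at the origin is never reached for fixed $(i,j)$.

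\textbf{Regularity.} For any compact neighbourhood $\mathcal K\subset\mathcal O$ of a point, Assumption~\ref{ass:K3} gives continuity of $k,\partial_{\theta_a}k,\partial_{\theta_a\theta_b}k$ on the compact set $\mathcal K\times[T_{j-1}-t_i,1]$, hence they are bounded there. Differentiation under the integral sign is therefore justified twice, first in $\bar k^j_\theta(s)$ and then in $q_{ij}$, with continuity of the derivatives by dominated convergence. Since $q_{ij}>0$ (positivity of the extension), $\log q_{ij}$ is $C^2$ on $\mathcal O$.

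\textbf{Gradient bound.} For $\theta\in\Theta$, use $\nabla_\theta k=k\nabla_\theta\log k$ and Assumption~\ref{ass:K4}:
\begin{equation}
|\nabla_\theta\bar k^j_\theta(s)|\le \frac C{\delta_j}\int_{T_{j-1}}^{T_j}k_\theta(u-s)L_1(u-s)du\le C L_1(T_{j-1}-s)\,\bar k_\theta^j(s),
\end{equation}
using that $L_1$ is nonincreasing. The factor $L_1(T_{j-1}-s)$ must be controlled by $(T_j-t_{i-1})^{-\varepsilon}$. This is the main obstacle, because $T_{j-1}-s$ may be much smaller than $T_j-t_{i-1}$: at the first admissible index it can be of order $\Delta_n$ or less. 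I would avoid evaluating $L_1$ at the left endpoint. By \eqref{eq:L-subpolynomial}, $L_1(x)\le C_\varepsilon x^{-\varepsilon'}$ for any small $\varepsilon'$, so
\begin{equation}
|\nabla_\theta\bar k^j_\theta(s)|\le \frac{C}{\delta_j}\int_{T_{j-1}}^{T_j}k_\theta(u-s)(u-s)^{-\varepsilon'}du.
\end{equation}
The kernel $x\mapsto k_\theta(x)x^{-\varepsilon'}$ still satisfies the ratio bound \eqref{ASOSIGrndfogimOAINOWEIFNOSFI}, with exponent $1/2-\alpha+\varepsilon'$. This exponent is $<1/2$ if $\varepsilon'<\alpha$, so it remains square integrable, and the averaging comparison \eqref{WOIEfndorginoIAMOSINfg} applies to it. Therefore
\begin{equation}
|\nabla_\theta\bar k^j_\theta(s)|\le C k_\theta(T_j-s)(T_j-s)^{-\varepsilon'}.
\end{equation}
Combining with the lower bound $\bar k_\theta^j(s)\ge ck_\theta(T_j-s)$ and the comparison $T_j-s\asymp T_j-t_{i-1}$ gives
\begin{equation}
|\nabla_\theta\bar k^j_\theta(s)|\le C(T_j-t_{i-1})^{-\varepsilon'}\bar k_\theta^j(s).
\end{equation}
The comparison $T_j-s\asymp T_j-t_{i-1}$ holds because $T_j-s\ge T_j-t_i\ge\delta_j$ and $t_i-s\le\Delta_n$. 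When $\Delta_n$ is not small relative to $T_j-t_i$, I would instead apply the averaging argument of \eqref{SDOIGnedorigoIANOISNFOGI} in $s$, i.e. integrate the bound over $s$ rather than compare pointwise. Then
\begin{equation}
|\nabla_\theta q_{ij}|=2\left|\int\bar k\nabla\bar k\,ds\right|\le C(T_j-t_{i-1})^{-\varepsilon'}q_{ij},
\end{equation}
and $|\nabla\log q_{ij}|$ is bounded as claimed.

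\textbf{Hessian bound.} Similarly, $\nabla^2_\theta k=k(\nabla^2\log k+\nabla\log k\,\nabla\log k^\top)$ is bounded by $Ck(L_2+L_1^2)$, and $L_2+L_1^2$ is again subpolynomial. The same averaging argument gives $|\nabla^2\bar k|\le C(T_j-t_{i-1})^{-\varepsilon'}\bar k$. Then
\begin{equation}
\nabla^2\log q=\frac{\nabla^2q}{q}-\frac{\nabla q\nabla q^\top}{q^2},
\end{equation}
with $\nabla^2 q=2\int(\nabla\bar k\nabla\bar k^\top+\bar k\nabla^2\bar k)\,ds$. This yields a bound $C(T_j-t_{i-1})^{-2\varepsilon'}$. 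Choosing $\varepsilon'=\varepsilon/2$ (and smaller than $\alpha$), and noting $T_j-t_{i-1}\le1$ so that smaller exponents are dominated, completes the proof. All constants are uniform because they come only from Assumptions~\ref{ass:K1} and~\ref{ass:K4} on $\Theta\times(0,1]$.
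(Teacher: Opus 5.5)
Your proposal is correct and follows essentially the same route as the paper. The subpolynomial growth of $L_1,L_2$ turns the derivative bounds into $Ck_\theta(x)x^{-\varepsilon'}$. Averaging over maturities then gives $|\nabla_\theta\bar k_\theta^j(s)|\le C\bar k_\theta^j(s)(T_j-s)^{-\varepsilon'}$, and similarly for the Hessian. Averaging over $[t_{i-1},t_i]$ turns this into $|\nabla_\theta q_{ij}|\le Cq_{ij}(T_j-t_{i-1})^{-\varepsilon'}$, after which the log-derivative identities finish the proof.

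The only deviation is your detour through the pointwise comparison $T_j-s\asymp T_j-t_{i-1}$. As you noticed, this fails when $T_j-t_i\ll\Delta_n$. The integrated argument you fall back on is exactly what the paper uses from the start, and it covers every case, so you can drop the case split.
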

\begin{proof}
    We first show that $q_{ij}$ is twice continuously differentiable on $\mathcal O$. Fix $i$ and $j\in\Pi_i$. The maturity lags appearing in the defining integrals lie in the compact interval $[T_{j-1}-t_i,T_j-t_{i-1}]\subset(0,1]$.
By Assumption \ref{ass:K3}, $k$ and its parameter derivatives up to order two are continuous and bounded on the product of this interval with any compact neighborhood contained in $\mathcal O$. Differentiation under both integrals is therefore justified twice, and the resulting derivatives are continuous in $\theta$. Hence $q_{ij}$ is twice continuously differentiable on $\mathcal O$.

Specifically,
\begin{equation}
    \nabla_\theta\bar k_\theta^j(t)
=
\frac{1}{\delta_j}
\int_{T_{j-1}}^{T_j}
\nabla_\theta k_\theta(u-t)du,
\qquad
\nabla_\theta^2\bar k_\theta^j(t)
=
\frac{1}{\delta_j}
\int_{T_{j-1}}^{T_j}
\nabla_\theta^2k_\theta(u-t)du,
\end{equation}
and consequently,
\begin{align}
\label{WSDFOISnfsdgoiNAOisfgndfgi}
    \nabla_\theta q_{ij}(\theta)
&=
2\int_{t_{i-1}}^{t_i}
\bar k_\theta^j(s)\nabla_\theta\bar k_\theta^j(s)ds,\\
\label{SOdifndfogimoIANSOidno}
\nabla_\theta^2q_{ij}(\theta)
&=
2\int_{t_{i-1}}^{t_i}
\left[
\nabla_\theta\bar k_\theta^j(s)
\nabla_\theta\bar k_\theta^j(s)^\top
+
\bar k_\theta^j(s)\nabla_\theta^2\bar k_\theta^j(s)
\right]ds.
\end{align}

Next, fix $0<\varepsilon<2\alpha$. Since
\begin{align}
\nabla_\theta k_\theta(x)
&=
k_\theta(x)\nabla_\theta\log k_\theta(x),\\
\nabla_\theta^2 k_\theta(x)
&=
k_\theta(x)\left[
\nabla_\theta^2\log k_\theta(x)
+
\nabla_\theta\log k_\theta(x)
\nabla_\theta\log k_\theta(x)^\top
\right],
\end{align}
Assumption \ref{ass:K4} implies that
\begin{equation}
    |\nabla_\theta k_\theta(x)|
+
|\nabla_\theta^2 k_\theta(x)|
\le
C k_\theta(x)x^{-\varepsilon/2},
\qquad
(\theta,x)\in\Theta\times(0,1].
\end{equation}
Fix $t<T_{j-1}$. The preceding derivative bounds imply that
\begin{equation}
|\nabla_\theta\bar k_\theta^j(t)|
+
|\nabla_\theta^2\bar k_\theta^j(t)|
\le
\frac{C}{\delta_j}
\int_{T_{j-1}}^{T_j}
k_\theta(u-t)(u-t)^{-\varepsilon/2}\,du.
\end{equation}
To estimate the integral, applying \eqref{OSIDNFOSIRgnoiNAOSINSORIFGNSODI} with $r=T_{j-1}-t$ and $\delta=\delta_j$, and noting that
\begin{equation}
T_{j-1}-t+\delta_jz
\ge (T_j-t)z,
\qquad 0<z\le1,
\end{equation}
we obtain
\begin{align}
\frac{1}{\delta_j}
\int_{T_{j-1}}^{T_j}
k_\theta(u-t)(u-t)^{-\varepsilon/2}\,du
&=
\int_0^1
k_\theta(T_{j-1}-t+\delta_jz)
(T_{j-1}-t+\delta_jz)^{-\varepsilon/2}\,dz
\\
&\leq
C k_\theta(T_j-t)(T_j-t)^{-\varepsilon/2}
\int_0^1 z^{\alpha-1/2-\varepsilon/2}\,dz
\end{align}
The last integral is finite because $\varepsilon<2\alpha$. Combining this estimate with the lower comparison bound in \eqref{WOIEfndorginoIAMOSINfg} yields
\begin{equation}
\label{SODifndofgimoAINOSIDnfoi}
|\nabla_\theta\bar k_\theta^j(t)|
+
|\nabla_\theta^2\bar k_\theta^j(t)|
\le
C\bar k_\theta^j(t)(T_j-t)^{-\varepsilon/2}.
\end{equation}

Substituting \eqref{SODifndofgimoAINOSIDnfoi} into
\eqref{WSDFOISnfsdgoiNAOisfgndfgi} and \eqref{SOdifndfogimoIANSOidno} gives
\begin{equation}
\label{SODIfdfgmOSIFNgodfigmZO}
|\nabla_\theta q_{ij}(\theta)|
\le
C
\int_{t_{i-1}}^{t_i}
\bar k_\theta^j(s)^2(T_j-s)^{-\varepsilon/2}ds, \quad
|\nabla_\theta^2 q_{ij}(\theta)|\le
C
\int_{t_{i-1}}^{t_i}
\bar k_\theta^j(s)^2(T_j-s)^{-\varepsilon}ds.
\end{equation}
We now estimate the weighted time integral. By the upper comparison
bound in \eqref{WOIEfndorginoIAMOSINfg},
\begin{equation}
\int_{t_{i-1}}^{t_i}
\bar k_\theta^j(s)^2(T_j-s)^{-\varepsilon}ds
\le
C\int_{t_{i-1}}^{t_i}
k_\theta(T_j-s)^2(T_j-s)^{-\varepsilon}ds.
\end{equation}
Applying \eqref{OSIDNFOSIRgnoiNAOSINSORIFGNSODI} with
$r=T_j-t_i$ and $\delta=\Delta_n$,  we obtain
\begin{align}
\int_{t_{i-1}}^{t_i}
k_\theta(T_j-s)^2(T_j-s)^{-\varepsilon}ds&=
\Delta_n\int_0^1
k_\theta(T_j-t_i+\Delta_n z)^2
(T_j-t_i+\Delta_n z)^{-\varepsilon}dz\\
&\le
C\Delta_n k_\theta(T_j-t_{i-1})^2
(T_j-t_{i-1})^{-\varepsilon}
\int_0^1 z^{2\alpha-1-\varepsilon}dz.
\end{align}
Here we also used
$T_j-t_i+\Delta_n z\ge (T_j-t_{i-1})z$.
The last integral is finite since $\varepsilon<2\alpha$.
Combining this estimate with the lower bound
from Lemma \ref{lemA1} yields
\begin{equation}
\label{SOdifgndfogimnzodicSB}
\int_{t_{i-1}}^{t_i}
\bar k_\theta^j(s)^2(T_j-s)^{-\varepsilon}ds
\le
C q_{ij}(\theta)
(T_j-t_{i-1})^{-\varepsilon}.
\end{equation}
The same argument proves \eqref{SOdifgndfogimnzodicSB}
with $\varepsilon/2$ in place of $\varepsilon$.
Applying these two bounds to
\eqref{SODIfdfgmOSIFNgodfigmZO} yields
\begin{equation}
\label{ERgoinAOInsdinZOImN}
|\nabla_\theta q_{ij}(\theta)|
\le
C q_{ij}(\theta)
(T_j-t_{i-1})^{-\varepsilon/2}, \quad
|\nabla_\theta^2 q_{ij}(\theta)|
\le
C q_{ij}(\theta)
(T_j-t_{i-1})^{-\varepsilon}.    
\end{equation}

Since $q_{ij}>0$, we have
\begin{align}
\nabla_\theta\log q_{ij}(\theta)
&=
\frac{\nabla_\theta q_{ij}(\theta)}{q_{ij}(\theta)},\\
\nabla_\theta^2\log q_{ij}(\theta)
&=
\frac{\nabla_\theta^2q_{ij}(\theta)}{q_{ij}(\theta)}
-
\frac{
\nabla_\theta q_{ij}(\theta)
\nabla_\theta q_{ij}(\theta)^\top
}{
q_{ij}(\theta)^2
}.
\end{align}
Substituting \eqref{ERgoinAOInsdinZOImN} into these
identities gives
\begin{equation}
    |\nabla_\theta\log q_{ij}(\theta)|+
|\nabla_\theta^2\log q_{ij}(\theta)|\leq 
C(T_j-t_{i-1})^{-\varepsilon},\qquad 0 <\varepsilon < 2\alpha.
\end{equation}

Finally, if $\varepsilon\ge2\alpha$, apply the estimate already proved
with exponent $\alpha$, which belongs to $(0,2\alpha)$, and use
$(T_j-t_{i-1})^{-\alpha}
\le (T_j-t_{i-1})^{-\varepsilon}$. This completes the proof.
\end{proof}

\begin{lem}
\label{OIdfgnoIAMdofigmmrDSK}
    Assume additionally that Assumption \ref{ass:V0} holds. For every $p \geq 1$, there exists a constant $C >0$ such that
    \begin{equation}
        \sup_{0\le t\le u\le1}
E\left[(V_t^u)^p+(V_t^u)^{-p}\right]
\le C
    \end{equation}
    and
    \begin{equation}
        |V_t^u-V_s^v|_p
\le C\left(
|t-s|^\alpha+|u-v|^{\alpha\wedge\beta}
\right),
\qquad
0\le t\le u\le1,\quad 0\le s\le v\le1.
    \end{equation}
    Moreover, the field $(V_t^u)_{0\leq t \leq u \leq 1}$ admits a modification that is jointly continuous and strictly positive  almost surely.
\end{lem}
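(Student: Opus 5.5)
We work directly with the Gaussian exponent. Write
\begin{equation}
    V_t^u=V_0^u\exp(Y_t^u),\qquad
    Y_t^u=M_t^u-\tfrac12\langle M^u\rangle_t,\qquad
    M_t^u=\int_0^t k_{\theta_0}(u-s)\,dW_s .
\end{equation}
By Lemma~\ref{lemA1}(ii), applied with $s=0$, the variance satisfies $\langle M^u\rangle_t=\int_0^t k_{\theta_0}(u-s)^2ds\le C t^{2\alpha}\le C$, uniformly in $0\le t\le u\le1$. Since $M_t^u$ is centred Gaussian, we have $E[e^{\pm pM_t^u}]=e^{p^2\langle M^u\rangle_t/2}$. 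Together with the bounds $c\le V_0^u\le C$ from Assumption~\ref{ass:V0} and continuity of $V_0$ on $[0,1]$, this gives the uniform bound on $E[(V_t^u)^{\pm p}]$ at once.

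For the increment bound we use $|e^a-e^b|\le (e^a+e^b)|a-b|$, H\"older's inequality and the moment bounds just obtained. This gives
\begin{equation}
    |V_t^u-V_s^v|_p\le C|V_0^u-V_0^v|+C|Y_t^u-Y_s^v|_{2p}.
\end{equation}
The first term is at most $C|u-v|^\beta$ by Assumption~\ref{ass:V0}. Since $Y$ is Gaussian plus a deterministic part, and $L^{2p}$ norms of Gaussians are controlled by their $L^2$ norms, it suffices to bound two quantities: $E|M_t^u-M_s^v|^2$ and the difference $|\langle M^u\rangle_t-\langle M^v\rangle_s|$.

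We split through the intermediate point $(s\wedge t,\,u)$, say with $s\le t$.
\begin{itemize}
\item The time part is $E|M_t^u-M_s^u|^2=\int_s^t k_{\theta_0}(u-r)^2dr\le C|t-s|^{2\alpha}$ by Lemma~\ref{lemA1}(ii).
\item The maturity part $E|M_s^u-M_s^v|^2$ is bounded by $C|u-v|^{2\alpha}$ via the second estimate of Lemma~\ref{lemA1}(ii). This requires $s\le u\wedge v$, which holds since $s\le v$ and $s\le t\le u$.
\item The deterministic brackets are handled the same way. Writing $a^2-b^2=(a-b)(a+b)$ and applying Cauchy--Schwarz, the bracket difference is bounded by the square roots of the previous two quantities times $\sup\langle M\rangle^{1/2}\le C$.
\end{itemize}
Combining these yields $C(|t-s|^\alpha+|u-v|^{\alpha})$. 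Together with the $\beta$ term, and since $|u-v|\le1$, this is bounded by the claimed $C(|t-s|^\alpha+|u-v|^{\alpha\wedge\beta})$. The case $t\le s$ is symmetric.

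For the modification, the main point is that the domain $\{0\le t\le u\le1\}$ is two-dimensional. Kolmogorov's criterion therefore needs moments of order $p$ with $p(\alpha\wedge\beta)>2$, which is available because the increment bound holds for every $p$. By Kolmogorov--Chentsov on this triangle (a closed convex set, so the standard dyadic argument applies), we obtain a jointly continuous modification. In fact it is H\"older of any order below $\alpha\wedge\beta$.

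Strict positivity is cleanest to obtain by applying Kolmogorov to $Y$ rather than to $V$. The same $L^2$ bounds, upgraded to $L^p$ by Gaussianity, give a continuous modification $\tilde Y$, and $V_0^u e^{\tilde Y_t^u}$ is then a continuous modification of $V$ that is automatically strictly positive, since $V_0^u\ge c$. The only delicate point is ensuring that the mixed increments are taken over admissible pairs within the triangle, which the split above handles.
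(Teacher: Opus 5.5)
Your proposal is correct and follows essentially the same route as the paper: exact lognormal moments from Lemma~\ref{lemA1}(ii), Gaussian $L^p$ control of the exponent split through the intermediate point $(s,u)$ (with Cauchy--Schwarz for the deterministic bracket term), a H\"older transfer from log-increments to increments of $V$, and Kolmogorov--Chentsov applied to the exponent rather than to $V$, then exponentiated to get a continuous, strictly positive modification. The only cosmetic difference is that you factor out $V_0^u$ and work with $Y=\log V-\log V_0$, whereas the paper keeps $\log V_0^u$ inside the logarithm and handles its $\beta$-H\"older increment there.
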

\begin{proof}
For every $p\geq1$, the definition of $V_t^u$ gives
\begin{align}
E[(V_t^u)^p]
&=(V_0^u)^p
\exp\left\{
\frac{p^2-p}{2}\int_0^t k_{\theta_0}(u-r)^2\,dr
\right\}, \\
E[(V_t^u)^{-p}]
&=(V_0^u)^{-p}
\exp\left\{
\frac{p^2+p}{2}\int_0^t k_{\theta_0}(u-r)^2\,dr
\right\}.
\end{align}
Assumption \ref{ass:V0} and Lemma~\ref{lemA1} therefore imply
\begin{equation}
\sup_{0\le t\le u\le1}
E\!\left[(V_t^u)^p+(V_t^u)^{-p}\right]\le C.
\label{eq:fv-moments}
\end{equation}

For $0\le s\le t\le u\le1$, we have
\begin{equation}
\log V_t^u-\log V_s^u
=
\int_s^t k_{\theta_0}(u-r)dW_r
-\frac12\int_s^t k_{\theta_0}(u-r)^2dr.
\end{equation}
The Gaussian moment formula and the kernel estimates in Lemma~\ref{lemA1} yield
\begin{align}
|\log V_t^u-\log V_s^u|_p
&\le
C\left(\int_s^t k_{\theta_0}(u-r)^2dr\right)^{1/2}
+\frac12\int_s^t k_{\theta_0}(u-r)^2dr \notag\\
&\le C\bigl(|t-s|^\alpha+|t-s|^{2\alpha}\bigr)
\le C|t-s|^\alpha,
\label{eq:fv-log-time}
\end{align}
where the last inequality uses $|t-s|\le1$ and $\alpha>0$.

Next, let $t\le u\wedge v$. Then
\begin{align}
\log V_t^u-\log V_t^v
&=
\log V_0^u-\log V_0^v
+\int_0^t
\bigl(k_{\theta_0}(u-r)-k_{\theta_0}(v-r)\bigr)dW_r \\
&\quad
-\frac12\int_0^t
\bigl(k_{\theta_0}(u-r)^2-k_{\theta_0}(v-r)^2\bigr)dr.
\end{align}
The lower bound and H\"{o}lder continuity in \ref{ass:V0} give
$|\log V_0^u-\log V_0^v|
\le C|u-v|^\beta$.
By the Gaussian moment formula and Lemma~\ref{lemA1},
\begin{align}
    \left|
\int_0^t
\bigl(k_{\theta_0}(u-r)-k_{\theta_0}(v-r)\bigr)dW_r
\right|_p &\le C\left(
\int_0^t
|k_{\theta_0}(u-r)-k_{\theta_0}(v-r)|^2dr
\right)^{1/2}\\
&\le C|u-v|^\alpha.
\end{align}
For the drift term, Cauchy–Schwarz and the same kernel estimates imply
\begin{equation}
    \left|
\int_0^t
\bigl(k_{\theta_0}(u-r)^2-k_{\theta_0}(v-r)^2\bigr)dr
\right|
\le C|u-v|^\alpha.
\end{equation}    
Combining these bounds gives
\begin{equation}
|\log V_t^u-\log V_t^v|_p
\le C\bigl(|u-v|^\beta+|u-v|^\alpha\bigr)
\le C|u-v|^{\alpha\wedge\beta},
\label{eq:fv-log-maturity}
\end{equation}
since $|u-v|\le1$.

Now take arbitrary points satisfying $0\le t\le u\le1$ and $0\le s\le v\le1$. By symmetry, we may assume $s\le t$, in which case $s\le u\wedge v$. The triangle inequality, \eqref{eq:fv-log-time}, and \eqref{eq:fv-log-maturity} give
\begin{align}
|\log V_t^u-\log V_s^v|_p
&\le
|\log V_t^u-\log V_s^u|_p
+|\log V_s^u-\log V_s^v|_p \\
&\le
C\left(|t-s|^\alpha+|u-v|^{\alpha\wedge\beta}\right).
\label{eq:fv-log-joint}
\end{align}
Moreover, using H\"{o}lder's inequality, together with \eqref{eq:fv-moments} and \eqref{eq:fv-log-joint}, we obtain
\begin{align}
|V_t^u-V_s^v|_{p}
&\le
|\log V_t^u-\log V_s^v|_{2p}
\left(|V_t^u|_{2p}+|V_s^v|_{2p}\right)\\
&\le
C\left(|t-s|^\alpha+|u-v|^{\alpha\wedge\beta}\right).
\end{align}

Finally, choose $p$ such that $p(\alpha\wedge\beta)>2$. 
\eqref{eq:fv-log-joint} and the Kolmogorov–Chentsov theorem yield a jointly continuous modification of $(t,u)\mapsto\log V_t^u$ on the closed triangle $\{(t,u) \mid 0\le t\le u\le1\}$. Exponentiating this modification gives a modification of $(V_t^u)_{0 \leq t \leq u \leq 1}$ which, on an event of probability one, is jointly continuous and strictly positive throughout the triangle.
\end{proof}

\begin{lem}
\label{SODIgndfogimOAIENWOrig}
    Assume additionally that Assumption \ref{ass:V0} holds. For any $p \geq 1$, there exists a constant $C$ such that 
    \begin{equation}
        \left|\frac{\sigma^j(t) - \bar k_{\theta_0}^j(t)}{\bar k_{\theta_0}^j(t)}\right|_p \leq C \frac{\delta_j^{1+\alpha \wedge \beta}}{T_j-t}, \qquad t <T_{j-1}.
    \end{equation}
    In particular,
    \begin{equation}
        |\sigma^j(t) - \bar k_{\theta_0}^j (t)|_p \leq C \delta_j^{1+\alpha \wedge \beta}(T_j-t)^{\alpha-3/2}, \qquad t<T_{j-1}.
    \end{equation}
\end{lem}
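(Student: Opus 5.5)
Write $w_t(u)\coloneqq V_t^u/J_t^j$, a random probability density on $[T_{j-1},T_j]$, so that $\sigma^j(t)=\int k_{\theta_0}(u-t)w_t(u)\,du$. The plan is to rewrite the error as a covariance. Indeed, since $\int w_t(u)\,du=1$,
\begin{equation}
    \sigma^j(t)-\bar k^j_{\theta_0}(t)
    =
    \int_{T_{j-1}}^{T_j}
    \bigl(k_{\theta_0}(u-t)-\bar k^j_{\theta_0}(t)\bigr)
    \bigl(w_t(u)-\delta_j^{-1}\bigr)\,du ,
\end{equation}
and $w_t(u)-\delta_j^{-1}=(V_t^u-\bar V_t)/J_t^j$, where $\bar V_t\coloneqq J_t^j/\delta_j$ is the maturity average of $V_t^\cdot$. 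This identity is exact, so everything reduces to bounding the two factors separately.

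For the deterministic factor, use Lemma~\ref{lemA1}(i). For $u,v\in[T_{j-1},T_j]$ the mean value theorem gives $|k_{\theta_0}(u-t)-k_{\theta_0}(v-t)|\le\delta_j\sup|\partial_xk_{\theta_0}|$ over lags in $[T_{j-1}-t,T_j-t]$. I would control this supremum by $|\partial_x\log k|\le C/x$ together with the comparison \eqref{OSIDNFOSIRgnoiNAOSINSORIFGNSODI}. The aim is
\begin{equation}
    |k_{\theta_0}(u-t)-\bar k^j_{\theta_0}(t)|
    \le C\,\frac{\delta_j}{T_j-t}\,\bar k^j_{\theta_0}(t)
\end{equation}
in an averaged sense, obtained by integrating against $|V_t^u-\bar V_t|$ rather than taking a pointwise supremum. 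The difficulty is that $T_{j-1}-t$ may be much smaller than $T_j-t$ (at worst $\Delta_n$ versus $\delta_j$), where $\partial_x k$ blows up. The fallback in that regime is the crude bound $|k(u-t)-\bar k|\le k(u-t)+\bar k$, which is integrable with $\int k(u-t)\,du=\delta_j\bar k$. We then only need $\delta_j/(T_j-t)\gtrsim 1$, which holds automatically when $T_j-t\le 2\delta_j$. So I split into two cases: $T_j-t\le2\delta_j$ (crude bound) and $T_j-t>2\delta_j$, where all lags are comparable to $T_j-t$ and the mean value bound $C\delta_j(T_j-t)^{-1}\bar k$ holds pointwise.

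For the random factor, the Hölder estimate in Lemma~\ref{OIdfgnoIAMdofigmmrDSK} gives $|V_t^u-V_t^v|_p\le C\delta_j^{\alpha\wedge\beta}$, hence $|V_t^u-\bar V_t|_{p}\le C\delta_j^{\alpha\wedge\beta}$ uniformly in $u$. The factor $1/J_t^j$ has $|(J_t^j)^{-1}|_p\le \delta_j^{-1}\sup_u|(V_t^u)^{-1}|_p\le C\delta_j^{-1}$ by Jensen applied to $x\mapsto x^{-p}$ and the negative moments. Divide the identity by $\bar k^j_{\theta_0}(t)$ and apply Minkowski's integral inequality in $u$, then Hölder for the product of $|V_t^u-\bar V_t|$ and $(J_t^j)^{-1}$. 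In the case $T_j-t>2\delta_j$ this gives
\begin{equation}
    C\,\frac{\delta_j}{T_j-t}\cdot\delta_j\cdot\delta_j^{\alpha\wedge\beta}\cdot\delta_j^{-1}.
\end{equation}
In the other case it gives $C\cdot 2\delta_j\bar k\cdot\delta_j^{\alpha\wedge\beta}\delta_j^{-1}/\bar k\le C\delta_j^{1+\alpha\wedge\beta}/(T_j-t)$, because $\delta_j/(T_j-t)\ge1/2$. Either way the first claim follows.

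The second claim follows by multiplying by $\bar k^j_{\theta_0}(t)\le Ck_{\theta_0}(T_j-t)\le C(T_j-t)^{\alpha-1/2}$, using \eqref{WOIEfndorginoIAMOSINfg} and Lemma~\ref{lemA1}(i). The main obstacle I expect is the near-diagonal regime: checking that the crude bound really suffices there, and that in the far regime the ratio $k(x)/k(y)$ for comparable lags stays bounded so the mean value bound can be expressed relative to $\bar k$.
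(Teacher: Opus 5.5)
Your proposal is correct and follows essentially the paper's proof: the same centered (covariance-type) identity for $\sigma^j(t)-\bar k^j_{\theta_0}(t)$, Minkowski's integral inequality combined with Jensen's bound on $\delta_j/J_t^j$ and H\"older, and the same near/far split ($T_{j-1}-t<\delta_j$ versus $T_{j-1}-t\ge\delta_j$), with the crude bound near the diagonal and the mean value theorem away from it. The differences are cosmetic: you center $V_t^u$ at its maturity average $\bar V_t$ rather than at $V_t^{T_j}$, and in the far case the boundedness of $k_{\theta_0}(x)/k_{\theta_0}(T_j-t)$ for comparable lags comes directly from \eqref{ASOSIGrndfogimOAINOWEIFNOSFI}, not from \eqref{OSIDNFOSIRgnoiNAOSINSORIFGNSODI}, whose factor $z^{\alpha-1/2}$ is unbounded as $z\to0$.
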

\begin{proof}
Fix $p\ge1$ and $t<T_{j-1}$. By the definitions of $\sigma^j(t)$ and
$\bar k_{\theta_0}^j(t)$,
\begin{equation}
\label{eq:sigma-centered-identity}
    \sigma^j(t)-\bar k_{\theta_0}^j(t)
    =
    \frac{1}{J_t^j}
    \int_{T_{j-1}}^{T_j}
    (V_t^u-V_t^{T_j})
    \bigl(k_{\theta_0}(u-t)-\bar k_{\theta_0}^j(t)\bigr)\,du.
\end{equation}
Applying Minkowski's integral inequality to
\eqref{eq:sigma-centered-identity} gives
\begin{equation}
\label{eq:sigma-minkowski}
    \left|
    \frac{\sigma^j(t)-\bar k_{\theta_0}^j(t)}
         {\bar k_{\theta_0}^j(t)}
    \right|_p
    \le
    \frac{1}{\delta_j\bar k_{\theta_0}^j(t)}
    \int_{T_{j-1}}^{T_j}
    \bigl|k_{\theta_0}(u-t)-\bar k_{\theta_0}^j(t)\bigr|
    \left|
        \frac{\delta_j}{J_t^j}(V_t^u-V_t^{T_j})
    \right|_pdu.
\end{equation}

Jensen's inequality and the negative moment bound in
Lemma~\ref{OIdfgnoIAMdofigmmrDSK} yield
\begin{equation}
\label{eq:sigma-inverse-moment}
    E\left[\left(\frac{\delta_j}{J_t^j}\right)^{2p}\right]
    =
    E\left[
        \left(
            \frac{1}{\delta_j}
            \int_{T_{j-1}}^{T_j}V_t^udu
        \right)^{-2p}
    \right] \le
    \frac{1}{\delta_j}
    \int_{T_{j-1}}^{T_j}E[(V_t^u)^{-2p}]du
    \le C.    
\end{equation}
Thus, by H\"older's inequality,
\eqref{eq:sigma-inverse-moment}, and the increment bound in
Lemma~\ref{OIdfgnoIAMdofigmmrDSK}, for every $u\in[T_{j-1},T_j]$,
\begin{equation}
    \left|
        \frac{\delta_j}{J_t^j}(V_t^u-V_t^{T_j})
    \right|_p\le
    \left|\frac{\delta_j}{J_t^j}\right|_{2p}
    |V_t^u-V_t^{T_j}|_{2p} \le C|T_j-u|^{\alpha\wedge\beta}
    \le C\delta_j^{\alpha\wedge\beta}.
\end{equation}
Substituting this estimate into \eqref{eq:sigma-minkowski},
we obtain
\begin{equation}
\label{eq:sigma-error-reduction}
    \left|
    \frac{\sigma^j(t)-\bar k_{\theta_0}^j(t)}
         {\bar k_{\theta_0}^j(t)}
    \right|_p
    \le
    \frac{C\delta_j^{\alpha\wedge\beta}}
         {\delta_j\bar k_{\theta_0}^j(t)}
    \int_{T_{j-1}}^{T_j}
    \bigl|k_{\theta_0}(u-t)-\bar k_{\theta_0}^j(t)\bigr|du.
\end{equation}

We next estimate the deterministic integral in
\eqref{eq:sigma-error-reduction}. If
$T_{j-1}-t<\delta_j$, positivity and the definition of
$\bar k_{\theta_0}^j(t)$ imply
\begin{align}
\label{eq:sigma-kernel-near}
    &\frac{1}{\delta_j\bar k_{\theta_0}^j(t)}
    \int_{T_{j-1}}^{T_j}
    \bigl|k_{\theta_0}(u-t)-\bar k_{\theta_0}^j(t)\bigr|\,du
    \notag\\
    &\qquad\le
    \frac{1}{\delta_j\bar k_{\theta_0}^j(t)}
    \int_{T_{j-1}}^{T_j}
    \bigl(k_{\theta_0}(u-t)+\bar k_{\theta_0}^j(t)\bigr)\,du
    =2
    \le\frac{4\delta_j}{T_j-t},
\end{align}
where the last inequality follows from
$T_j-t=(T_{j-1}-t)+\delta_j<2\delta_j$.

Suppose instead that $T_{j-1}-t\ge\delta_j$.
Applying \eqref{ASOSIGrndfogimOAINOWEIFNOSFI} with
$y=T_j-t$ gives
\begin{equation}
\label{eq:sigma-kernel-comparison}
    k_{\theta_0}(x)
    \le
    C\left(\frac{T_j-t}{x}\right)^{1/2-\alpha}
    k_{\theta_0}(T_j-t)
    \le Ck_{\theta_0}(T_j-t),
    \qquad
    x\in[T_{j-1}-t,T_j-t],
\end{equation}
since $T_j-t\le2(T_{j-1}-t)\le2x$.
Combining \eqref{eq:sigma-kernel-comparison} with
Assumption~\ref{ass:K1}, we obtain
\begin{equation}
    |\partial_x k_{\theta_0}(x)|
    =
    k_{\theta_0}(x)
    |\partial_x\log k_{\theta_0}(x)| 
    \le
    C\frac{k_{\theta_0}(T_j-t)}{T_{j-1}-t}, 
\end{equation}
where we used $x\ge T_{j-1}-t$ and $T_{j-1}-t\le1$.
The mean value theorem therefore yields
\begin{equation}
\label{eq:sigma-kernel-increment}
    |k_{\theta_0}(u-t)-k_{\theta_0}(v-t)|\le
    \frac{C|u-v|}{T_{j-1}-t}k_{\theta_0}(T_j-t)\le
    \frac{C\delta_j}{T_j-t}k_{\theta_0}(T_j-t),
    \quad u,v\in[T_{j-1},T_j],    
\end{equation}
again using $T_j-t\le2(T_{j-1}-t)$.

Averaging \eqref{eq:sigma-kernel-increment} with respect
to $v$ gives
\begin{equation}
    |k_{\theta_0}(u-t)-\bar k_{\theta_0}^j(t)|\le
    \frac{1}{\delta_j}
    \int_{T_{j-1}}^{T_j}
    |k_{\theta_0}(u-t)-k_{\theta_0}(v-t)|dv\le
    C\frac{\delta_j}{T_j-t}k_{\theta_0}(T_j-t).
\end{equation}
Integrating this estimate with respect to $u$ and using the
lower bound in \eqref{WOIEfndorginoIAMOSINfg}, we conclude that
\begin{equation}
    \label{eq:sigma-kernel-far}
    \frac{1}{\delta_j\bar k_{\theta_0}^j(t)}
    \int_{T_{j-1}}^{T_j}
    |k_{\theta_0}(u-t)-\bar k_{\theta_0}^j(t)|\,du
  \le
    C\frac{\delta_j}{T_j-t}
    \frac{k_{\theta_0}(T_j-t)}{\bar k_{\theta_0}^j(t)}
    \le C\frac{\delta_j}{T_j-t}.
\end{equation}

Combining \eqref{eq:sigma-kernel-near} and
\eqref{eq:sigma-kernel-far} with
\eqref{eq:sigma-error-reduction} proves
\begin{equation}
\label{eq:sigma-relative-error}
    \left|
    \frac{\sigma^j(t)-\bar k_{\theta_0}^j(t)}
         {\bar k_{\theta_0}^j(t)}
    \right|_p
    \le
    C\frac{\delta_j^{1+\alpha\wedge\beta}}{T_j-t}.
\end{equation}

Finally, the upper bound in
\eqref{WOIEfndorginoIAMOSINfg}, together with
$k_{\theta_0}(x)\le Cx^{\alpha-1/2}$, gives
\begin{equation}
    \bar k_{\theta_0}^j(t)
    \le Ck_{\theta_0}(T_j-t)
    \le C(T_j-t)^{\alpha-1/2}.
\end{equation}
Multiplying \eqref{eq:sigma-relative-error} by this
deterministic bound yields
\begin{equation}
    |\sigma^j(t)-\bar k_{\theta_0}^j(t)|_p
    \le
    C\delta_j^{1+\alpha\wedge\beta}
    (T_j-t)^{\alpha-3/2},
\end{equation}
which completes the proof.
\end{proof}

\begin{lem}
\label{lemA5}
    Assume additionally that Assumption \ref{ass:V0} holds. For every $p\ge1$, there exists a constant $C>0$ such that
\begin{equation}
    |X_{ij}|_p+|\bar X_{ij}|_p
\le C\sqrt{q_{ij}(\theta_0)}
\end{equation}
and
\begin{equation}
   |X_{ij}-\bar X_{ij}|_p
\le
C\sqrt{q_{ij}(\theta_0)}
\frac{\delta_j^{1+\alpha\wedge\beta}}{T_j-t_i} 
\end{equation}
for all $i$ and $j\in\Pi_i$. Consequently,
\begin{equation}
   \left|
\frac{X_{ij}^2-\bar X_{ij}^2}{q_{ij}(\theta_0)}
\right|_p
\le
C\frac{\delta_j^{1+\alpha\wedge\beta}}{T_j-t_i} 
\end{equation}
for the same indices.
\end{lem}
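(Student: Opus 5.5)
We will work from the representation \eqref{eq:log-M-semimartingale} of Proposition~\ref{prop:maturity-increment-representations}, which gives
\begin{equation}
    X_{ij}
    =
    \int_{t_{i-1}}^{t_i}\sigma^j(s)\,dW_s
    -\frac12\int_{t_{i-1}}^{t_i}\sigma^j(s)^2\,ds.
\end{equation}
Subtracting the definition \eqref{eq:gaussian-log-increment} of $\bar X_{ij}$ yields
\begin{equation}
    X_{ij}-\bar X_{ij}
    =
    \int_{t_{i-1}}^{t_i}\bigl(\sigma^j(s)-\bar k_{\theta_0}^j(s)\bigr)dW_s
    -\frac12\int_{t_{i-1}}^{t_i}\bigl(\sigma^j(s)-\bar k_{\theta_0}^j(s)\bigr)\bigl(\sigma^j(s)+\bar k_{\theta_0}^j(s)\bigr)ds.
\end{equation}

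The moment bound for $\bar X_{ij}$ is immediate from its Gaussian law. Its variance is $q_{ij}(\theta_0)$ and its mean is $-q_{ij}(\theta_0)/2$. By Lemma~\ref{lemA1}(iii), $q_{ij}(\theta_0)\le C\Delta_n^{2\alpha}\le C$, so the mean is bounded by $C\sqrt{q_{ij}(\theta_0)}$. The bound for $X_{ij}$ will then follow from the triangle inequality once the difference bound is proved, because $\delta_j^{1+\alpha\wedge\beta}/(T_j-t_i)\le C$. To see this, note that $j\in\Pi_i$ gives $T_j-t_i>\delta_j$.

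\emph{Stochastic term.} Apply the Burkholder--Davis--Gundy inequality, and then Minkowski's integral inequality in the form $|\int f^2ds|_{p/2}\le\int|f|_p^2ds$ for $p\ge2$. Combined with the relative bound of Lemma~\ref{SODIgndfogimOAIENWOrig}, this gives
\begin{equation}
    \Bigl|\int_{t_{i-1}}^{t_i}(\sigma^j-\bar k_{\theta_0}^j)dW_s\Bigr|_p^2
    \le
    C\int_{t_{i-1}}^{t_i}\bar k_{\theta_0}^j(s)^2\frac{\delta_j^{2(1+\alpha\wedge\beta)}}{(T_j-s)^2}ds
    \le
    C\frac{\delta_j^{2(1+\alpha\wedge\beta)}}{(T_j-t_i)^2}\,q_{ij}(\theta_0).
\end{equation}
The last step uses $T_j-s\ge T_j-t_i$ for $s\le t_i$.

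\emph{Drift term.} Write $\sigma^j+\bar k_{\theta_0}^j=2\bar k_{\theta_0}^j+(\sigma^j-\bar k_{\theta_0}^j)$. Hölder's inequality and Lemma~\ref{SODIgndfogimOAIENWOrig} then give $|\sigma^j(s)+\bar k_{\theta_0}^j(s)|_{2p}\le C\bar k_{\theta_0}^j(s)$. Minkowski's inequality and Hölder's inequality together bound the drift in $L^p$ by
\begin{equation}
    C\frac{\delta_j^{1+\alpha\wedge\beta}}{T_j-t_i}\,q_{ij}(\theta_0).
\end{equation}
This is at most the required bound, since $q_{ij}(\theta_0)\le C\sqrt{q_{ij}(\theta_0)}$ by the boundedness of $q_{ij}(\theta_0)$.

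For the consequence, factor $X_{ij}^2-\bar X_{ij}^2=(X_{ij}-\bar X_{ij})(X_{ij}+\bar X_{ij})$. Apply Hölder's inequality with exponents $2p,2p$, use the two bounds just proved, and divide by $q_{ij}(\theta_0)$.

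The only delicate point is justifying the use of BDG and Fubini-type moment bounds uniformly up to $t_i<T_{j-1}$, where $\sigma^j$ is merely locally bounded. The $L^p$ estimate of Lemma~\ref{SODIgndfogimOAIENWOrig} holds uniformly on $[t_{i-1},t_i]$ and is integrable there, since $T_j-s\ge\delta_j>0$. This should make the integrand square-integrable in $L^p$, so the stochastic integral is a true martingale on the interval and BDG applies directly. I expect no substantive obstacle beyond this bookkeeping.
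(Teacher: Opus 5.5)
Your proposal is correct and follows essentially the paper's route: Lemma~\ref{SODIgndfogimOAIENWOrig} combined with BDG and Minkowski for the martingale part, H\"older for the drift, and H\"older with exponents $2p,2p$ for the squared difference. The only difference is that you obtain $|X_{ij}|_p\le C\sqrt{q_{ij}(\theta_0)}$ from the difference bound by the triangle inequality, while the paper bounds it directly via $|\sigma^j(s)|_{p'}\le C\bar k_{\theta_0}^j(s)$. The case $1\le p<2$ follows from $|\cdot|_p\le|\cdot|_2$. BDG holds for continuous local martingales, so the true-martingale check you flag is unnecessary.
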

\begin{proof}
Fix $i$, $j\in\Pi_i$, and $p\ge2$, and write
$q=q_{ij}(\theta_0)$ throughout this proof.
By Lemma~\ref{SODIgndfogimOAIENWOrig}, for any $p' \geq 1$ and
$s\in[t_{i-1},t_i]$,
\begin{equation}
\label{eq:X-coefficient-error}
|\sigma^j(s)-\bar k_{\theta_0}^j(s)|_{p'}
\le
C\bar k_{\theta_0}^j(s)
\frac{\delta_j^{1+\alpha\wedge\beta}}{T_j-t_i}.
\end{equation}
Since $T_j-t_i>\delta_j$ and $\delta_j\le1$, the triangle
inequality yields
\begin{equation}
\label{eq:X-coefficient-moment}
|\sigma^j(s)|_{p'}\le C\bar k_{\theta_0}^j(s).
\end{equation}

The Burkholder--Davis--Gundy and Minkowski inequalities,
together with \eqref{eq:X-coefficient-moment}, give
\begin{equation}
\left|
\int_{t_{i-1}}^{t_i}\sigma^j(s)dW_s
\right|_p
\le
C\left|
\int_{t_{i-1}}^{t_i}\sigma^j(s)^2ds
\right|_{p/2}^{1/2}\le
C\left(
\int_{t_{i-1}}^{t_i}|\sigma^j(s)|_p^2ds
\right)^{1/2}
\le C\sqrt q.    
\end{equation}
Similarly,
\begin{equation}
\left|
\int_{t_{i-1}}^{t_i}\sigma^j(s)^2ds
\right|_p
\le
\int_{t_{i-1}}^{t_i}|\sigma^j(s)|_{2p}^2ds
\le Cq.
\end{equation}
Thus $|X_{ij}|_p\le C(\sqrt q+q)$.
Since $\bar X_{ij}\sim N(-q/2,q)$, we also have
$|\bar X_{ij}|_p\le C\sqrt q+q/2$.
Lemma~\ref{lemA1} gives $q\le C\Delta_n^{2\alpha}\le C$,
and hence $q\le C\sqrt q$. Consequently,
\begin{equation}
\label{eq:X-moments}
|X_{ij}|_p+|\bar X_{ij}|_p\le C\sqrt q.
\end{equation}

Next, write
\begin{equation}
    \label{eq:X-difference}
X_{ij}-\bar X_{ij}
=\int_{t_{i-1}}^{t_i}
\bigl(\sigma^j(s)-\bar k_{\theta_0}^j(s)\bigr)dW_s
-\frac12\int_{t_{i-1}}^{t_i}
\bigl(\sigma^j(s)^2-\bar k_{\theta_0}^j(s)^2\bigr)ds.
\end{equation}
Applying the Burkholder--Davis--Gundy and Minkowski
inequalities as above and using
\eqref{eq:X-coefficient-error}, we obtain
\begin{align}
\left|
\int_{t_{i-1}}^{t_i}
\bigl(\sigma^j(s)-\bar k_{\theta_0}^j(s)\bigr)dW_s
\right|_p&\le
C\left(
\int_{t_{i-1}}^{t_i}
|\sigma^j(s)-\bar k_{\theta_0}^j(s)|_p^2ds
\right)^{1/2}\\
&\le
C\sqrt q\,
\frac{\delta_j^{1+\alpha\wedge\beta}}{T_j-t_i}.
\end{align}
For the drift term, H\"older's inequality and
\eqref{eq:X-coefficient-error}--\eqref{eq:X-coefficient-moment}
yield
\begin{equation}
|\sigma^j(s)^2-\bar k_{\theta_0}^j(s)^2|_p
\le
|\sigma^j(s)-\bar k_{\theta_0}^j(s)|_{2p}
|\sigma^j(s)+\bar k_{\theta_0}^j(s)|_{2p}
\le
C\bar k_{\theta_0}^j(s)^2
\frac{\delta_j^{1+\alpha\wedge\beta}}{T_j-t_i}.    
\end{equation}
Therefore, Minkowski's inequality gives
\begin{equation}
    \left|
\int_{t_{i-1}}^{t_i}
\bigl(\sigma^j(s)^2-\bar k_{\theta_0}^j(s)^2\bigr)ds
\right|_p
\le
\int_{t_{i-1}}^{t_i}
|\sigma^j(s)^2-\bar k_{\theta_0}^j(s)^2|_pds
\le
Cq\,
\frac{\delta_j^{1+\alpha\wedge\beta}}{T_j-t_i}.
\end{equation}
Combining these estimates with \eqref{eq:X-difference}
and $q\le C\sqrt q$ proves
\begin{equation}
\label{eq:X-difference-moment}
|X_{ij}-\bar X_{ij}|_p
\le
C\sqrt q\,
\frac{\delta_j^{1+\alpha\wedge\beta}}{T_j-t_i}.
\end{equation}
The bounds \eqref{eq:X-moments} and
\eqref{eq:X-difference-moment} extend to $1\le p<2$
by the inequality $|\cdot|_p\le|\cdot|_2$.

Finally, for any $p\ge1$, H\"older's inequality and
\eqref{eq:X-moments}--\eqref{eq:X-difference-moment},
applied with exponent $2p$, imply
\begin{equation}
\left|
\frac{X_{ij}^2-\bar X_{ij}^2}{q}
\right|_p
\le
\frac1q
|X_{ij}-\bar X_{ij}|_{2p}
\bigl(|X_{ij}|_{2p}+|\bar X_{ij}|_{2p}\bigr)
\le
C\frac{\delta_j^{1+\alpha\wedge\beta}}{T_j-t_i}.    
\end{equation}
This completes the proof.
\end{proof}

\begin{rem}
\label{rem:true-power-law-estimates}
The exponent $\alpha$ in Assumption~\ref{ass:K1}
provides a bound that is uniform over the parameter set.
For estimates involving only the true kernel and the
forward variance curve generated by it, this exponent
can be sharpened in the power-law case.
Specifically, suppose that
\begin{equation}
    k_{\theta_0}(x)=\eta_0x^{H_0-1/2},
    \qquad \eta_0>0,\quad H_0\in(0,1/2).
\end{equation}
Applying the proof of Lemma~\ref{lemA5} with $H_0$
in place of $\alpha$, we obtain, for every $p\ge1$,
\begin{equation}
    \left|
        \frac{X_{ij}^2-\bar X_{ij}^2}
             {q_{ij}(\theta_0)}
    \right|_p
    \le
    C\frac{\delta_j^{1+H_0\wedge\beta}}{T_j-t_i},
    \qquad j\in\Pi_i.
\end{equation}

The constants may depend on $p$ and $\theta_0$, but remain
independent of the observation grids.
This refinement concerns the fixed true parameter and
does not assert that Assumption~\ref{ass:K1} holds with
$\alpha=H_0$ uniformly over $\Theta$.
The exponent $\beta$ remains the one specified in
Assumption~\ref{ass:V0}.
\end{rem}
\section{Proof of consistency}
\label{A2}
Throughout this appendix, we assume
\ref{ass:K0}--\ref{ass:K2} and \ref{ass:V0}.
Let $\bar U_{n,d}$ denote the criterion obtained from
$U_{n,d}$ by replacing $X_{ij}$ with its Gaussian
approximation $\bar X_{ij}$:
\begin{equation}
\label{barU}
    \bar U_{n,d}(\theta)
    \coloneqq
    \frac{1}{n}
    \sum_{i=1}^{n}\sum_{j\in\Pi_i}\delta_j
    \left[
        \log\frac{q_{ij}(\theta)}{\Delta_n}
        +\frac{\bar X_{ij}^{2}}{q_{ij}(\theta)}
        +\frac{q_{ij}(\theta)}{4}
    \right].
\end{equation}

Throughout the proofs, $c >0$ and $C>0$ denote generic constants
in lower
and upper bounds, respectively. Their values may change from line to line.
Unless otherwise stated, these constants are independent of the parameter,
the observation grids, and the time and maturity variables.

\begin{lem}
    \label{lem:criterion-gaussian-approximation}
There exists a constant $C>0$ such that
\begin{equation}
    E\left[
        \sup_{\theta\in\Theta}
        \left|U_{n,d}(\theta)-\bar U_{n,d}(\theta)\right|
    \right]
    \le C\Delta_d^{\,\alpha\wedge\beta}
\end{equation}
for all $n,d$.
\end{lem}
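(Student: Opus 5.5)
The difference $U_{n,d}-\bar U_{n,d}$ involves only the middle terms, because the logarithmic and $q_{ij}(\theta)/4$ terms do not depend on the observations:
\begin{equation}
    U_{n,d}(\theta)-\bar U_{n,d}(\theta)
    =
    \frac1n\sum_{i=1}^n\sum_{j\in\Pi_i}\delta_j
    \frac{q_{ij}(\theta_0)}{q_{ij}(\theta)}
    \cdot
    \frac{X_{ij}^2-\bar X_{ij}^2}{q_{ij}(\theta_0)}.
\end{equation}
The plan is to pull the supremum inside the double sum, using the deterministic bound on the ratio $q_{ij}(\theta_0)/q_{ij}(\theta)$, and then take expectations term by term.

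\textbf{Step 1.} By Lemma~\ref{lemA1}(iii), uniformly in $\theta\in\Theta$, $q_{ij}(\theta_0)/q_{ij}(\theta)\le C(T_j-t_{i-1})^{2\alpha-1}$. Consequently
\begin{equation}
    E\Bigl[\sup_{\theta}|U_{n,d}-\bar U_{n,d}|\Bigr]
    \le
    \frac Cn\sum_{i}\sum_{j\in\Pi_i}\delta_j(T_j-t_{i-1})^{2\alpha-1}
    \left|\frac{X_{ij}^2-\bar X_{ij}^2}{q_{ij}(\theta_0)}\right|_1 .
\end{equation}

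\textbf{Step 2.} Apply Lemma~\ref{lemA5} with $p=1$. This bounds the last factor by $C\delta_j^{1+\alpha\wedge\beta}/(T_j-t_i)$. For $j\in\Pi_i$ we have $T_j-t_i\ge\delta_j$, since $t_i<T_{j-1}$. This gives two bounds on the factor: $C\delta_j^{\alpha\wedge\beta}$, and $C\Delta_d^{\alpha\wedge\beta}\,\delta_j/(T_j-t_i)$. The first is enough:
\begin{equation}
    \delta_j\frac{\delta_j^{1+\alpha\wedge\beta}}{T_j-t_i}\le \delta_j\,\Delta_d^{\alpha\wedge\beta}.
\end{equation}
The inner sum is therefore at most $C\Delta_d^{\alpha\wedge\beta}\sum_{j\in\Pi_i}\delta_j(T_j-t_{i-1})^{2\alpha-1}$.

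\textbf{Step 3.} The final step of the proof of Lemma~\ref{lemA1}(iii) shows $\sum_{j\in\Pi_i}\delta_j(T_j-t_{i-1})^{2\alpha-1}\le C$ uniformly in $i$. Averaging over $i$ with the factor $1/n$ then gives the stated bound $C\Delta_d^{\alpha\wedge\beta}$, uniformly in $n$ and $d$.

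The main obstacle is that the weight $(T_j-t_{i-1})^{2\alpha-1}$ is singular near the diagonal, while the approximation error $\delta_j^{1+\alpha\wedge\beta}/(T_j-t_i)$ is also largest there. The resolution is to use $T_j-t_i\ge\delta_j$ to absorb the approximation error completely. What remains is the weight alone, which is integrable in maturity because $2\alpha-1>-1$. No coupling between $n$ and $d$ is needed.
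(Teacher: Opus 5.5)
Your proof is correct and follows essentially the same route as the paper. The paper bounds $1/q_{ij}(\theta)\le C/\Delta_n$ uniformly in $\theta$, then uses Lemma~\ref{lemA5} with $T_j-t_i>\delta_j$ to get $E|X_{ij}^2-\bar X_{ij}^2|\le Cq_{ij}(\theta_0)\Delta_d^{\alpha\wedge\beta}$, and finishes with $\sup_i\sum_{j\in\Pi_i}\delta_j q_{ij}(\theta_0)/\Delta_n\le C$ from Lemma~\ref{lemA1}, whereas your ratio bound $q_{ij}(\theta_0)/q_{ij}(\theta)\le C(T_j-t_{i-1})^{2\alpha-1}$ and the sum $\sum_{j\in\Pi_i}\delta_j(T_j-t_{i-1})^{2\alpha-1}\le C$ combine the same two ingredients in a different order.
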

\begin{proof}
By definition,
\begin{equation}
U_{n,d}(\theta)-\bar U_{n,d}(\theta)
=
\frac{1}{n}\sum_{i=1}^n\sum_{j\in\Pi_i}
\delta_j
\frac{X_{ij}^2-\bar X_{ij}^2}{q_{ij}(\theta)}.
\end{equation}
Since Lemma~\ref{lemA1} gives
$\inf_{\theta\in\Theta}q_{ij}(\theta)\ge c\Delta_n$,
the triangle inequality yields
\begin{equation}
\label{eq:contrast-replacement-sup}
\sup_{\theta\in\Theta}
|U_{n,d}(\theta)-\bar U_{n,d}(\theta)|
\le
\frac{C}{n}\sum_{i=1}^n\sum_{j\in\Pi_i}
\frac{\delta_j}{\Delta_n}
|X_{ij}^2-\bar X_{ij}^2|.
\end{equation}

For $j\in\Pi_i$, we have $t_i<T_{j-1}$ and hence
$T_j-t_i>\delta_j$. Applying Lemma~\ref{lemA5} and using
$\delta_j\le\Delta_d$, we obtain
\begin{equation}
\label{eq:contrast-replacement-increment}
E|X_{ij}^2-\bar X_{ij}^2|
\le
Cq_{ij}(\theta_0)
\frac{\delta_j^{1+(\alpha\wedge\beta)}}{T_j-t_i}\le
Cq_{ij}(\theta_0)\Delta_d^{\alpha\wedge\beta}.
\end{equation}
Moreover, Lemma~\ref{lemA1} gives, uniformly in $i$,
$\sum_{j\in\Pi_i}
\delta_jq_{ij}(\theta_0)/\Delta_n
\le C$.
Taking expectations in
\eqref{eq:contrast-replacement-sup} and applying
\eqref{eq:contrast-replacement-increment} together with
this uniform bound, we conclude that
\begin{equation}
E\left[
\sup_{\theta\in\Theta}
|U_{n,d}(\theta)-\bar U_{n,d}(\theta)|
\right]\le
\frac{C\Delta_d^{\alpha\wedge\beta}}{n}
\sum_{i=1}^n\sum_{j\in\Pi_i}
\delta_j\frac{q_{ij}(\theta_0)}{\Delta_n}\le
C\Delta_d^{\alpha\wedge\beta}.
\end{equation}
This completes the proof.
\end{proof}

\begin{lem}
    As $n,d\to\infty$, $\bar U_{n,d}$ converges to $U$
    uniformly on $\Theta$ in probability.
\end{lem}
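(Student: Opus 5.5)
We decompose $\bar U_{n,d}(\theta)$ into a deterministic part and a centered stochastic part. Since $E[\bar X_{ij}^2]=q_{ij}(\theta_0)+q_{ij}(\theta_0)^2/4$, we write
\begin{equation}
    \bar U_{n,d}(\theta)=A_{n,d}(\theta)+M_{n,d}(\theta),
\end{equation}
where
\begin{align}
    A_{n,d}(\theta)
    &\coloneqq
    \frac1n\sum_{i=1}^n\sum_{j\in\Pi_i}\delta_j
    \left[\log\frac{q_{ij}(\theta)}{\Delta_n}
    +\frac{q_{ij}(\theta_0)+q_{ij}(\theta_0)^2/4}{q_{ij}(\theta)}
    +\frac{q_{ij}(\theta)}4\right],\\
    M_{n,d}(\theta)
    &\coloneqq
    \frac1n\sum_{i=1}^n\sum_{j\in\Pi_i}\delta_j
    \frac{\bar X_{ij}^2-E[\bar X_{ij}^2]}{q_{ij}(\theta)}.
\end{align}
The plan is to show $\sup_\theta|A_{n,d}-U|\to0$ deterministically and $\sup_\theta|M_{n,d}|\to0$ in probability.

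\emph{Deterministic part.} The terms $q_{ij}(\theta)/4$ and $q_{ij}(\theta_0)^2/(4q_{ij}(\theta))$ are negligible. By Lemma~\ref{lemA1}, $q_{ij}(\theta)\le C\Delta_n^{2\alpha}$, and the ratio bound $q_{ij}(\theta_0)/q_{ij}(\theta)\le C(T_j-t_{i-1})^{2\alpha-1}$ combined with the summability bound of Lemma~\ref{lemA1}(iii) shows both contribute $O(\Delta_n^{2\alpha})$ uniformly in $\theta$. For the main terms, we introduce the step-type function $g_{n,d}(\theta;t,u)\coloneqq q_{ij}(\theta)/\Delta_n$ for $t\in(t_{i-1},t_i]$ and $u\in(T_{j-1},T_j]$ with $j\in\Pi_i$, and set it to zero otherwise. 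The main part of $A_{n,d}$ is then
\begin{equation}
    \int\!\!\int \mathbf 1\{\cdots\}\Bigl[\log g_{n,d}(\theta)+g_{n,d}(\theta_0)/g_{n,d}(\theta)\Bigr]\,du\,dt .
\end{equation}
For fixed $0<t<u<1$, continuity of $k$ gives $g_{n,d}(\theta;t,u)\to k_\theta(u-t)^2$ uniformly on compact $\theta$-sets. This uses the comparison $\bar k^j_\theta\asymp k_\theta(T_j-\cdot)$ and uniform continuity of $k$ on $\Theta\times[\epsilon,1]$. The region indicator converges to $\mathbf 1\{t<u\le 1\}$ because $T_d\to1$ and $\Delta_d\to0$.

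For domination, we use $c\le g\le C(u-t)^{2\alpha-1}$ up to grid shifts, which follows from Lemma~\ref{lemA1}(iii) together with $T_j-t_{i-1}\ge u-t$. Then $|\log g|\le C(1+|\log(u-t)|)$ and $g(\theta_0)/g(\theta)\le C(u-t)^{2\alpha-1}$, and both bounds are integrable. Dominated convergence with the sup over $\theta$ inside, justified by uniform pointwise convergence plus an integrable envelope, yields convergence to
\begin{equation}
    \int_0^1\!\!\int_t^1\bigl[\log k_\theta(u-t)^2+k_{\theta_0}^2/k_\theta^2\bigr]\,du\,dt .
\end{equation}
The substitution $x=u-t$ turns this into $U(\theta)$.

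\emph{Stochastic part.} We write $M_{n,d}(\theta)=\frac1n\sum_i\sum_{j}\delta_j w_{ij}(\theta)\xi_{ij}$, with $\xi_{ij}\coloneqq(\bar X_{ij}^2-E\bar X_{ij}^2)/q_{ij}(\theta_0)$ and $w_{ij}(\theta)\coloneqq q_{ij}(\theta_0)/q_{ij}(\theta)$. The vectors $(\bar X_{ij})_j$ are independent across $i$, and $|\xi_{ij}|_2\le C$. By Cauchy--Schwarz in $j$, pointwise in $\theta$,
\begin{equation}
    E|M_{n,d}(\theta)|^2\le \frac{C}{n^2}\sum_i\Bigl(\sum_j\delta_j w_{ij}\Bigr)^2\le C/n ,
\end{equation}
using $w_{ij}\le C(T_j-t_{i-1})^{2\alpha-1}$ and its summability.

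\emph{Main obstacle: uniformity in $\theta$ for the stochastic part.} Assumption~\ref{ass:K0} gives no $\theta$-differentiability, so a Sobolev or derivative-based argument is unavailable. We plan to use a finite-net argument with a modulus of continuity instead. We bound
\begin{equation}
    \sup_{|\theta-\theta'|\le h}|M_{n,d}(\theta)-M_{n,d}(\theta')|
    \le \frac1n\sum_{i,j}\delta_j\,\frac{|\bar X_{ij}^2|+E\bar X_{ij}^2}{q_{ij}(\theta_0)}\,\sup_{|\theta-\theta'|\le h}|w_{ij}(\theta)-w_{ij}(\theta')| .
\end{equation}
We then need $\sup_{i,j}$ of the relative oscillation of $q_{ij}(\cdot)$ to be small on the region $T_j-t_{i-1}\ge\epsilon$. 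This follows from uniform continuity of $k$ on $\Theta\times[\epsilon,1]$. The near-diagonal region $T_j-t_{i-1}<\epsilon$ is handled crudely through the envelope $(T_j-t_{i-1})^{2\alpha-1}$, whose weighted sum is $O(\epsilon^{2\alpha})$ in $L^1$. Combining pointwise convergence on a finite $h$-net with this modulus bound, then letting $h,\epsilon\to0$, gives $\sup_\theta|M_{n,d}|\to0$ in probability, which completes the proof.
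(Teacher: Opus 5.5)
Your proof is correct. Its stochastic half is essentially the paper's argument: the paper also gets the pointwise bound $E|\cdot|^2\le C/n$ from independence of the rows $(\bar X_{ij})_j$ across $i$ and the summability bound of Lemma~\ref{lemA1}, and it obtains uniformity in $\theta$ from a finite net and a modulus of continuity based on uniform continuity of $k$ on $\Theta\times[r,1]$.

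The real difference is how you handle the diagonal singularity and the deterministic limit. The paper truncates the whole criterion at the outset, into a random boundary term $\bar U_{n,d}-\bar U^r_{n,d}$, a truncated term $\bar U^r_{n,d}-U^r$, and a deterministic boundary term $U^r-U$. On the truncated region it identifies $E[\bar U^r_{n,d}]$ by an explicit Riemann-sum estimate: within-cell variation is bounded by $\omega^r(\Delta_n+\Delta_d)$ and the uncovered area by $(1-T_d)+\Delta_n+\Delta_d$, and $r\downarrow0$ comes last. You instead split off the mean $A_{n,d}$ first and treat it on the full triangle by step functions and dominated convergence. The envelope is $C(1+|\log(u-t)|+(u-t)^{2\alpha-1})$, from Lemma~\ref{lemA1}(iii) and $T_j-t_{i-1}\ge u-t$. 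Truncation then appears only inside the modulus step for $M_{n,d}$.

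Your route is shorter and replaces the uncovered-area bookkeeping with a.e.\ convergence of the region indicator. The paper's route gives explicit error terms on the truncated region. It also sets up the truncation/Riemann-sum template that the paper reuses for $\mathcal B_{n,d}$ and $\mathcal D_{n,d}$ in Lemma~\ref{lem:deterministic-matrix-limits}.

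Two small points to tighten. First, on the region $T_j-t_{i-1}\ge\epsilon$ the lags entering $q_{ij}$ satisfy only $T_{j-1}-t_i\ge\epsilon-\Delta_n-\Delta_d$. You therefore need uniform continuity of $k$ on $\Theta\times[\epsilon/2,1]$ for $n,d$ large. Second, the oscillation bound on that region carries an $\epsilon$-dependent constant. The limits must therefore be taken in the order $n,d\to\infty$, then $h\downarrow0$, then $\epsilon\downarrow0$.
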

\begin{proof}
For the purposes of this proof, fix $r\in(0,1)$ and define
\begin{equation}
\Pi_i^r
\coloneqq
\{j\in\Pi_i:T_{j-1}-t_i\ge r\},
\end{equation}
together with the truncated criteria
\begin{align}
\bar U_{n,d}^r(\theta)
&\coloneqq
\frac1n\sum_{i=1}^n\sum_{j\in\Pi_i^r}\delta_j
\left[
\log\frac{q_{ij}(\theta)}{\Delta_n}
+\frac{\bar X_{ij}^{2}}{q_{ij}(\theta)}
+\frac{q_{ij}(\theta)}4
\right],\\
U^r(\theta)
&\coloneqq
\int_r^1(1-x)
\left[
\log k_\theta(x)^2
+\frac{k_{\theta_0}(x)^2}{k_\theta(x)^2}
\right]dx.
\end{align}

In addition to the generic constants $C$ introduced above,
we use $C_r>0$ to denote generic constants in upper bounds
whose values may change from line to line.
Their only additional dependence is on $r$; in particular,
they are independent of $\theta$, the observation grids,
and the time and maturity variables.

By the triangle inequality,
\begin{align}
\sup_{\theta\in\Theta}|\bar U_{n,d}(\theta)-U(\theta)|
&\le
\sup_{\theta\in\Theta}
|\bar U_{n,d}(\theta)-\bar U_{n,d}^r(\theta)|+
\sup_{\theta\in\Theta}
|\bar U_{n,d}^r(\theta)-U^r(\theta)|\\
&\quad+
\sup_{\theta\in\Theta}|U^r(\theta)-U(\theta)|.
\label{eq:ulln-decomposition}
\end{align}
We first show that the middle term converges to zero in probability
for each fixed $r$.

The Gaussian moment formula and
Lemma~\ref{lemA1} yield, uniformly in
$\theta\in\Theta$ and $i$,
\begin{equation}
\left|
\sum_{j\in\Pi_i^r}
\frac{\delta_j}{q_{ij}(\theta)}
\bigl(\bar X_{ij}^2-E[\bar X_{ij}^2]\bigr)
\right|_2
\le
C\sum_{j\in\Pi_i}
\delta_j\frac{q_{ij}(\theta_0)}{\Delta_n}
\le C.
\end{equation}
The vectors $(\bar X_{ij})_{j\in\Pi_i^r}$ are independent across
the time index $i$. Thus, for every $\theta\in\Theta$,
\begin{equation}
\label{eq:ulln-centered-pointwise}
E\left|
\bar U_{n,d}^r(\theta)-E[\bar U_{n,d}^r(\theta)]
\right|^2
=\frac{1}{n^2}\sum_{i=1}^n E\left|\sum_{j\in\Pi_i^r}
\frac{\delta_j}{q_{ij}(\theta)}
\bigl(\bar X_{ij}^2-E[\bar X_{ij}^2]\bigr)\right|^2 \le \frac{C}{n}.
\end{equation}

To obtain uniform convergence, we use the uniform continuity of $k$
on $\Theta\times[r,1]$. For every $\varepsilon_k>0$, there exists
$\rho>0$ such that
$\sup_{x\in[r,1]}|k_\theta(x)-k_\vartheta(x)|\le\varepsilon_k$
whenever $\theta,\vartheta\in\Theta$ and
$|\theta-\vartheta|<\rho$.
On the truncated region, the kernels are uniformly bounded and
$q_{ij}(\theta)\ge c\Delta_n$. It follows that
$|q_{ij}(\theta)-q_{ij}(\vartheta)|
\le C_r\varepsilon_k\Delta_n$, which also controls the differences
of the logarithms and reciprocals appearing in the criterion.
Hence, using the Gaussian moment formula and Lemma~\ref{lemA1},
\begin{equation}
\label{eq:ulln-modulus}
E\left[
\sup_{\substack{\theta,\vartheta\in\Theta\\
|\theta-\vartheta|<\rho}}
|\bar U_{n,d}^r(\theta)-\bar U_{n,d}^r(\vartheta)|
\right]\le
C_r\varepsilon_k
E\left[
1+\frac1n\sum_{i=1}^n\sum_{j\in\Pi_i^r}
\delta_j\frac{\bar X_{ij}^2}{\Delta_n}
\right]
\le C_r\varepsilon_k.
\end{equation}
By compactness, $\Theta$ admits a finite $\rho$-net
$\theta^{(1)},\ldots,\theta^{(N)}$, where $N$ depends only on
$r$ and $\varepsilon_k$.
Approximating each parameter by a point of this net and applying
Markov's inequalities to
\eqref{eq:ulln-centered-pointwise} and \eqref{eq:ulln-modulus},
respectively, gives, for every $\varepsilon>0$,
\begin{equation}
P\left(
\sup_{\theta\in\Theta}
|\bar U_{n,d}^r(\theta)-E[\bar U_{n,d}^r(\theta)]|>\varepsilon
\right)
\le
\frac{CN}{n\varepsilon^2}+\frac{C_r\varepsilon_k}{\varepsilon}.
\end{equation}
For each fixed $r\in(0,1)$, letting $n,d\to\infty$ first
and then $\varepsilon_k\downarrow0$ yields
\begin{equation}
\label{eq:ulln-centered-uniform}
\sup_{\theta\in\Theta}
|\bar U_{n,d}^r(\theta)-E[\bar U_{n,d}^r(\theta)]|
\xrightarrow{p}0.
\end{equation}

We next identify the limit of the expectations. For $h>0$, define
\begin{equation}
\omega^r(h)
\coloneqq
\sup_{\substack{\theta\in\Theta,\ x,y\in[r,1]\\|x-y|\le h}}
|k_\theta(x)-k_\theta(y)|.
\end{equation}
Assumption~\ref{ass:K0} and compactness imply that
$\omega^r(h)\to0$ as $h\downarrow0$.

For $j\in\Pi_i^r$, all maturity lags in the defining integrals
belong to $[r,1]$ and differ from $T_j-t_i$ by at most
$\Delta_n+\Delta_d$. Thus
$|\bar k_\theta^j(s)-k_\theta(T_j-t_i)|
\le\omega^r(\Delta_n+\Delta_d)$ for
$s\in[t_{i-1},t_i]$.
Since the kernels are uniformly bounded on the truncated region,
squaring and averaging over the time interval gives
\begin{equation}
\label{eq:ulln-q-approximation}
\sup_{\substack{\theta\in\Theta,\ 1\le i\le n\\j\in\Pi_i^r}}
\left|
\frac{q_{ij}(\theta)}{\Delta_n}
-k_\theta(T_j-t_i)^2
\right|
\le
C_r\omega^r(\Delta_n+\Delta_d).
\end{equation}
Both quantities in \eqref{eq:ulln-q-approximation} are uniformly
bounded above and bounded away from zero. Consequently, the errors
in the corresponding logarithms and ratios are also bounded by
$C_r\omega^r(\Delta_n+\Delta_d)$. Moreover,
\begin{equation}
\sup_{\substack{\theta\in\Theta,\ 1\le i\le n\\j\in\Pi_i^r}}
\left[
\frac{q_{ij}(\theta_0)^2}{q_{ij}(\theta)}
+q_{ij}(\theta)
\right]
\le C_r\Delta_n.
\end{equation}
The Gaussian moment formula and
$\sum_i\sum_{j\in\Pi_i^r}\delta_j/n\le1$ yield
\begin{align}
\label{eq:ulln-mean-riemann}
&\sup_{\theta\in\Theta}
\left|
E[\bar U_{n,d}^r(\theta)]
-
\frac1n\sum_{i=1}^n\sum_{j\in\Pi_i^r}\delta_j
\left[
\log k_\theta(T_j-t_i)^2
+\frac{k_{\theta_0}(T_j-t_i)^2}{k_\theta(T_j-t_i)^2}
\right]
\right|\\
&\qquad\le
C_r\left[\omega^r(\Delta_n+\Delta_d)+\Delta_n\right].
\end{align}

The sum in \eqref{eq:ulln-mean-riemann} is a Riemann sum over
the rectangles $(t_{i-1},t_i]\times(T_{j-1},T_j]$ with
$j\in\Pi_i^r$, whose areas are $\Delta_n\delta_j$.
These rectangles lie in
$\{(t,u)\in[0,1]^2\mid u-t\ge r\}$.
Within this region, their union leaves uncovered only points in
$\{t=0\}$, $\{u>T_d\}$, or
$\{r\le u-t<r+\Delta_n+\Delta_d\}$.
Indeed, if $t>0$, $u\le T_d$, and
$u-t\ge r+\Delta_n+\Delta_d$, the rectangle containing $(t,u)$
satisfies
$T_{j-1}-t_i\ge u-t-\delta_j-\Delta_n\ge r$.
The uncovered area is therefore at most
$(1-T_d)+\Delta_n+\Delta_d$.

The integrand is uniformly bounded on the truncated region, and
its variation within each rectangle is at most
$C_r\omega^r(\Delta_n+\Delta_d)$.
Consequently, the Riemann sum converges uniformly in $\theta$ to $U^r$.
Combining the rectangle approximation and the uncovered-area bound
with \eqref{eq:ulln-mean-riemann} yields
\begin{equation}
\sup_{\theta\in\Theta}
|E[\bar U_{n,d}^r(\theta)]-U^r(\theta)|
\le
C_r\left[
\omega^r(\Delta_n+\Delta_d)
+\Delta_n+\Delta_d+(1-T_d)
\right].
\end{equation}
Together with \eqref{eq:ulln-centered-uniform}, this proves
\begin{equation}
\label{eq:ulln-truncated}
\sup_{\theta\in\Theta}
|\bar U_{n,d}^r(\theta)-U^r(\theta)|
\xrightarrow{p}0
\end{equation}
for each fixed $r$.

We now control the first term in \eqref{eq:ulln-decomposition}.
The Gaussian moment formula and
Lemma~\ref{lemA1} give
\begin{equation}
E\left[
\sup_{\theta\in\Theta}
\left|
\log\frac{q_{ij}(\theta)}{\Delta_n}
+\frac{\bar X_{ij}^2}{q_{ij}(\theta)}
+\frac{q_{ij}(\theta)}4
\right|
\right]
\le
C\left[
1+|\log(T_j-t_{i-1})|
+(T_j-t_{i-1})^{2\alpha-1}
\right].
\end{equation}
For $j\in\Pi_i\setminus\Pi_i^r$, we have
$0<T_{j-1}-t_i<r$ and $T_j-t_i<r+\Delta_d$.
Furthermore, $T_j-t_{i-1}\ge u-t_i>0$ for
$u\in[T_{j-1},T_j]$.
Since $1+|\log x|$ and $x^{2\alpha-1}$ are nonincreasing on
$(0,1]$, comparison with integrals over the disjoint maturity
intervals gives
\begin{align}
\label{eq:ulln-boundary-random}
E\left[
\sup_{\theta\in\Theta}
|\bar U_{n,d}(\theta)-\bar U_{n,d}^r(\theta)|
\right]&\le
C\int_0^{\min\{r+\Delta_d,1\}}
\left[1+|\log x|+x^{2\alpha-1}\right]dx\\
&\le
C\left[
(r+\Delta_d)^{2\alpha}
+(r+\Delta_d)\{1+|\log(r+\Delta_d)|\}
\right].
\end{align}

Finally, Lemma~\ref{lemA1} gives
$|\log k_\theta(x)^2|\le C(1+|\log x|)$ and
$k_{\theta_0}(x)^2/k_\theta(x)^2\le Cx^{2\alpha-1}$,
uniformly in $\theta$. Hence
\begin{align}
\label{eq:ulln-boundary-deterministic}
\sup_{\theta\in\Theta}|U(\theta)-U^r(\theta)|
&\le
C\int_0^r
\left[1+|\log x|+x^{2\alpha-1}\right]dx\\
&\le
C\left[r^{2\alpha}+r(1+|\log r|)\right].
\end{align}
Since $\Delta_d\to0$, \eqref{eq:ulln-boundary-random} gives
\begin{equation}
\limsup_{n,d\to\infty}
E\left[
\sup_{\theta\in\Theta}
|\bar U_{n,d}(\theta)-\bar U_{n,d}^r(\theta)|
\right]
\le
C\left[r^{2\alpha}+r(1+|\log r|)\right].
\end{equation}
This upper bound and the deterministic bound in
\eqref{eq:ulln-boundary-deterministic} both tend to zero as
$r\downarrow0$.

For each fixed $r$, \eqref{eq:ulln-truncated} shows that the
middle term in \eqref{eq:ulln-decomposition} converges to zero
in probability as $n,d\to\infty$.
Combining this convergence with the boundary estimates and
applying Markov's inequality, we obtain the asserted uniform
convergence in probability by first letting $n,d\to\infty$
for fixed $r$ and then letting $r\downarrow0$.
\end{proof}

\begin{proof}[Proof of Theorem~\ref{thm:uniform-contrast}]
By the triangle inequality,
\begin{equation*}
    \sup_{\theta\in\Theta}|U_{n,d}(\theta)-U(\theta)|
    \le
    \sup_{\theta\in\Theta}
    |U_{n,d}(\theta)-\bar U_{n,d}(\theta)|
    +
    \sup_{\theta\in\Theta}
    |\bar U_{n,d}(\theta)-U(\theta)|.
\end{equation*}
Both terms on the right converge to zero in probability
by the two preceding lemmas.
\end{proof}
\section{Proof of asymptotic normality}
\label{A3}
Throughout this appendix, we assume
Assumptions~\ref{ass:K0}--\ref{ass:K5} and
Assumption~\ref{ass:V0}, with
$\theta_0\in\operatorname{Int}(\Theta)$.

As in the preceding appendix, the letters $c$ and $C$
denote positive constants that are independent of
$\theta\in\Theta$, $n$,$d$, and the observation grids.
Their values may change from one occurrence to the next.
We continue to use the Gaussian proxy contrast
$\bar U_{n,d}$ defined in~\eqref{barU}.
The matrices $B$, $D$, and $\Gamma$ are defined
in~\eqref{eq:limiting-hessian},
\eqref{eq:limiting-score-covariance}, and
\eqref{eq:asymptotic-covariance}, respectively.
To separate the diagonal boundary from the interior,
for each $r\in(0,1)$, define
\begin{equation}
    \Pi_i^r
    \coloneqq
    \{j\in\Pi_i \mid T_{j-1}-t_i\ge r\}.
\end{equation}
Define the truncated contrasts by
\begin{equation}
      \bar U_{n,d}^r(\theta)
    \coloneqq
    \frac1n\sum_{i=1}^n\sum_{j\in\Pi_i^r}\delta_j
    \left[
        \log\frac{q_{ij}(\theta)}{\Delta_n}
        +\frac{\bar X_{ij}^2}{q_{ij}(\theta)}
        +\frac{q_{ij}(\theta)}4
    \right].  
\end{equation}
In estimates on the truncated region, $C_r$ denotes a
positive constant that may additionally depend on $r$,
but remains independent of the parameter, the observation
grids, and the time and maturity variables.
Its value may change from one occurrence to the next.

Let
\begin{equation}
        \Omega_{n,d}
    \coloneqq
    \bigl\{
        \hat\theta_{n,d}\in\operatorname{Int}(\Theta)
    \bigr\}.
\end{equation}
By Corollary~\ref{cor:consistency} and the interiority of
$\theta_0$, the probability of $\Omega_{n,d}$ tends to one.

Define
\begin{equation}
    B_{n,d}
    \coloneqq
    \int_0^1
    \nabla_\theta^2 U_{n,d}
    \bigl(
        \theta_0+s(\hat\theta_{n,d}-\theta_0)
    \bigr)\,ds.
\end{equation}
This integral is well-defined by
Lemma~\ref{lemA2} and the convexity of $\Theta$.

On $\Omega_{n,d}$, the first-order condition gives
$\nabla_\theta U_{n,d}(\hat\theta_{n,d})=0$.
Hence the integral form of Taylor's formula yields,
on $\Omega_{n,d}$,
\begin{equation}
    0
    =
    \nabla_\theta U_{n,d}(\theta_0)
    +B_{n,d}(\hat\theta_{n,d}-\theta_0).
    \label{eq:estimator-taylor-expansion}
\end{equation}

\begin{lem}
\label{lem:score-comparison}
For every $\varepsilon\in(0,\alpha\wedge\beta)$,
there exists a constant $C$ such that
\begin{equation}
    E\!\left[
        \left|
            \nabla_\theta U_{n,d}(\theta_0)
            -
            \nabla_\theta\bar U_{n,d}(\theta_0)
        \right|
    \right]
    \le
    C
    \Delta_d^{1+\alpha\wedge\beta-\varepsilon}.
    \label{eq:score-comparison-bound}
\end{equation}
Consequently, if
\eqref{eq:maximal-mesh-condition} holds for some
$\varepsilon\in(0,\alpha\wedge\beta)$, then
\begin{equation}
        \sqrt n\,
    \bigl(
        \nabla_\theta U_{n,d}(\theta_0)
        -
        \nabla_\theta\bar U_{n,d}(\theta_0)
    \bigr)
\end{equation}
converges to zero in probability.
\end{lem}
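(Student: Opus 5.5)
Write the score difference explicitly. Since only the term $X_{ij}^2/q_{ij}(\theta)$ differs between $U_{n,d}$ and $\bar U_{n,d}$,
\begin{equation}
    \nabla_\theta U_{n,d}(\theta_0)-\nabla_\theta\bar U_{n,d}(\theta_0)
    =
    -\frac1n\sum_{i=1}^n\sum_{j\in\Pi_i}\delta_j
    \frac{X_{ij}^2-\bar X_{ij}^2}{q_{ij}(\theta_0)}
    \nabla_\theta\log q_{ij}(\theta_0).
\end{equation}
The plan is to bound the $L^1$ norm of this sum crudely, term by term, by the triangle inequality. No martingale cancellation is needed, since the target rate already contains the full factor $\Delta_d^{1+\alpha\wedge\beta}$.

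For each term, Lemma~\ref{lemA2} with exponent $\varepsilon$ gives $|\nabla_\theta\log q_{ij}(\theta_0)|\le C(T_j-t_{i-1})^{-\varepsilon}\le C(T_j-t_i)^{-\varepsilon}$. Lemma~\ref{lemA5} with $p=1$ gives
\begin{equation}
    E\left|\frac{X_{ij}^2-\bar X_{ij}^2}{q_{ij}(\theta_0)}\right|
    \le C\frac{\delta_j^{1+\alpha\wedge\beta}}{T_j-t_i}.
\end{equation}
Hence the expectation is at most
\begin{equation}
    \frac Cn\sum_{i=1}^n\sum_{j\in\Pi_i}\delta_j\,
    \frac{\delta_j^{1+\alpha\wedge\beta}}{(T_j-t_i)^{1+\varepsilon}}
    \le
    C\Delta_d^{1+\alpha\wedge\beta-\varepsilon}\,
    \frac1n\sum_{i=1}^n\sum_{j\in\Pi_i}
    \delta_j\,\frac{\delta_j^{\varepsilon}}{(T_j-t_i)^{1+\varepsilon}}.
\end{equation}
Here we used $\delta_j^{1+\alpha\wedge\beta}\le\Delta_d^{1+\alpha\wedge\beta-\varepsilon}\delta_j^{\varepsilon}$.

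The main obstacle is to show that the remaining double sum is bounded uniformly in the grids, despite the near-diagonal singularity $(T_j-t_i)^{-1-\varepsilon}$, which is not integrable. The saving factor is $\delta_j^{\varepsilon}$ together with $T_j-t_i>\delta_j$. For fixed $i$ and $j\in\Pi_i$, every $u\in[T_{j-1},T_j]$ satisfies $u-t_i\le T_j-t_i$. Also $T_j-t_i\ge \delta_j+(T_{j-1}-t_i)$, so $T_j-t_i\ge (u-t_i)$ and $T_j-t_i\ge\delta_j$. Thus
\begin{equation}
    \frac{\delta_j^{1+\varepsilon}}{(T_j-t_i)^{1+\varepsilon}}
    =\int_{T_{j-1}}^{T_j}\frac{\delta_j^{\varepsilon}}{(T_j-t_i)^{1+\varepsilon}}\,du
    \le\int_{T_{j-1}}^{T_j}\frac{1}{T_j-t_i}\,du .
\end{equation}
This only produces a logarithm, which must be handled more carefully.

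To handle it, split $j\in\Pi_i$ according to whether $T_{j-1}-t_i\ge\delta_j$ or not. In the first case, $T_j-t_i\le 2(u-t_i)$ on the interval, and $\delta_j^\varepsilon\le(u-t_i)^{\varepsilon}$. The term is then at most $C\int_{T_{j-1}}^{T_j}(u-t_i)^{-1}du$. Summed over $j$, this gives $C\log(1/\Delta_{\min})$, which is still not bounded. I therefore expect to spend a little of the exponent instead. I would replace $\varepsilon$ by $\varepsilon/2$ in Lemma~\ref{lemA2}, which is permitted since that lemma holds for every $\varepsilon>0$. The factor to control then becomes $\delta_j^{\varepsilon}(T_j-t_i)^{-1-\varepsilon/2}$, which is at most $(u-t_i)^{-1+\varepsilon/2}$, up to constants, in the first case. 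This is integrable, so the $j$-sum is at most $C\int_{t_i}^1 x^{-1+\varepsilon/2}dx\le C$.

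In the second case, $T_j-t_i<2\delta_j$, and at most a bounded number of indices $j$ can occur; in fact, only the first $j\in\Pi_i$ qualifies, as the intervals are disjoint and all lie beyond $t_i$. For such $j$, $\delta_j^{1+\varepsilon}(T_j-t_i)^{-1-\varepsilon/2}\le \delta_j^{\varepsilon/2}\le 1$. Averaging over $i$ with the weight $1/n$ therefore keeps the total bounded, and \eqref{eq:score-comparison-bound} follows with exponent $1+\alpha\wedge\beta-\varepsilon$.

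The consequence is then immediate. Multiplying by $\sqrt n$ and applying Markov's inequality gives a bound of $C\sqrt n\Delta_d^{1+\alpha\wedge\beta-\varepsilon}$, which tends to zero by \eqref{eq:maximal-mesh-condition}.
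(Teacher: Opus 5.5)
Your proposal follows the paper's argument: the same score-difference identity, a term-by-term bound using Lemma~\ref{lemA2} with exponent $\varepsilon/2$ and Lemma~\ref{lemA5}, extraction of $\Delta_d^{1+\alpha\wedge\beta-\varepsilon}$, and comparison of the maturity sum with $\int_{t_i}^1(u-t_i)^{-1+\varepsilon/2}du$.

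One step is wrong as written. In your second case you claim that only the first index of $\Pi_i$ can satisfy $T_{j-1}-t_i<\delta_j$. That is true for uniform grids, but the paper allows general grids. Take $T_{j-1}-t_i=h$ and subsequent mesh sizes $\delta_{j+k}=2^{k+1}h$. Then $T_{j+k-1}-t_i=(2^{k+1}-1)h<\delta_{j+k}$ for every $k$, so roughly $\log_2(\Delta_d/h)$ indices fall into that case. Your per-term bound $\delta_j^{\varepsilon/2}\le1$ then no longer gives a bounded total.

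The case split is unnecessary anyway. For every $j\in\Pi_i$ you have $\delta_j<T_j-t_i$, and $T_j-t_i\ge u-t_i$ for $u\in[T_{j-1},T_j]$. Hence
\[
\delta_j^{1+\varepsilon}(T_j-t_i)^{-1-\varepsilon/2}\le\delta_j(T_j-t_i)^{-1+\varepsilon/2}\le\int_{T_{j-1}}^{T_j}(u-t_i)^{-1+\varepsilon/2}\,du .
\]
This is exactly the paper's one-line comparison, and it bounds the whole $j$-sum by $2/\varepsilon$ at once. With this replacement your proof is complete, and the Markov step for the consequence is fine.
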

\begin{proof}
Differentiating the contrast gives
\begin{equation}
\label{eq:contrast-gradient-formula}
    \nabla_\theta U_{n,d}(\theta)
    =
    \frac1n\sum_{i=1}^n\sum_{j\in\Pi_i}
    \delta_j\nabla_\theta\log q_{ij}(\theta)
    \left[
        1-\frac{X_{ij}^2}{q_{ij}(\theta)}
        +\frac{q_{ij}(\theta)}4
    \right].
\end{equation}
Replacing $X_{ij}$ by $\bar X_{ij}$ gives the corresponding
formula for $\nabla_\theta\bar U_{n,d}(\theta)$.
Subtracting these expressions at $\theta_0$, we obtain
\begin{equation}
    \nabla_\theta U_{n,d}(\theta_0)
    -\nabla_\theta\bar U_{n,d}(\theta_0)
    =
    -\frac1n\sum_{i=1}^n\sum_{j\in\Pi_i}
    \delta_j\nabla_\theta\log q_{ij}(\theta_0)
    \frac{X_{ij}^2-\bar X_{ij}^2}{q_{ij}(\theta_0)}.
\end{equation}
Fix $\varepsilon\in(0,\alpha\wedge\beta)$.
Applying Lemma~\ref{lemA2} with exponent $\varepsilon/2$
and Lemma~\ref{lemA5} yields
\begin{align}
\label{eq:score-comparison-reduction}
    E\left[
        \left|
            \nabla_\theta U_{n,d}(\theta_0)
            -\nabla_\theta\bar U_{n,d}(\theta_0)
        \right|
    \right]&\le
    \frac Cn\sum_{i=1}^n\sum_{j\in\Pi_i}
    \frac{\delta_j^{2+\alpha\wedge\beta}}
         {(T_j-t_i)(T_j-t_{i-1})^{\varepsilon/2}}\\
    &\le
    \frac{C\Delta_d^{1+\alpha\wedge\beta-\varepsilon}}{n}
    \sum_{i=1}^n\sum_{j\in\Pi_i}
    \delta_j(T_j-t_i)^{-1+\varepsilon/2},
\end{align}
where we used
$\delta_j<T_j-t_i\le T_j-t_{i-1}$ and
$\delta_j\le\Delta_d$ for every $j \in \Pi_i$.

Since $\varepsilon<\alpha\le1/2$, comparison over the
disjoint maturity intervals gives
\begin{align}
    \sum_{j\in\Pi_i}
    \delta_j(T_j-t_i)^{-1+\varepsilon/2}
    &\le
    \sum_{j\in\Pi_i}
    \int_{T_{j-1}}^{T_j}(u-t_i)^{-1+\varepsilon/2}du\\
    &\le
    \int_{t_i}^1(u-t_i)^{-1+\varepsilon/2}du
    =
    \frac2\varepsilon(1-t_i)^{\varepsilon/2}
    \le\frac2\varepsilon.
\end{align}
Substituting this bound into
\eqref{eq:score-comparison-reduction} proves
\eqref{eq:score-comparison-bound}.
\end{proof}

\begin{lem}
\label{lemC1}
    There exists a constant $C$ such that
    \begin{equation}
        E\left[\sup_{\theta \in \Theta} \left|\nabla^2_\theta U_{n,d}(\theta) - \nabla^2_\theta \bar U_{n,d}(\theta)\right|\right] \leq C \Delta_d^{\alpha \wedge \beta}.
    \end{equation}
    Moreover, for any $r \in (0,1)$,
    \begin{equation}
             E\left[\sup_{\theta \in \Theta} \left|\nabla^2_\theta \bar U_{n,d}(\theta) - \nabla^2_\theta \bar U_{n,d}^r(\theta)\right|\right] \leq C (r + \Delta_d)^\alpha.
    \end{equation}
\end{lem}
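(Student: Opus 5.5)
We first write out the Hessian of the contrast explicitly. Differentiating \eqref{eq:contrast-gradient-formula} once more gives
\begin{equation}
    \nabla_\theta^2U_{n,d}(\theta)
    =
    \frac1n\sum_{i=1}^n\sum_{j\in\Pi_i}\delta_j
    \left\{
        \nabla_\theta^2\log q_{ij}(\theta)
        \left[1-\frac{X_{ij}^2}{q_{ij}(\theta)}+\frac{q_{ij}(\theta)}4\right]
        +\nabla_\theta\log q_{ij}(\theta)\nabla_\theta\log q_{ij}(\theta)^\top
        \left[\frac{X_{ij}^2}{q_{ij}(\theta)}+\frac{q_{ij}(\theta)}4\right]
    \right\}.
\end{equation}
The same formula holds for $\bar U_{n,d}$ with $\bar X_{ij}$ in place of $X_{ij}$, and for $\bar U^r_{n,d}$ with the sum restricted to $\Pi_i^r$. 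All bounds below are built from three ingredients. The first is Lemma~\ref{lemA2}, applied with a small exponent $\varepsilon'$ to be fixed. The second is the lower bound $q_{ij}(\theta)\ge c\Delta_n$ and the ratio bound $q_{ij}(\theta_0)/q_{ij}(\theta)\le C(T_j-t_{i-1})^{2\alpha-1}$ from Lemma~\ref{lemA1}(iii). The third is the moment bounds of Lemma~\ref{lemA5}.

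\emph{First estimate.} In the difference $\nabla^2_\theta U_{n,d}-\nabla^2_\theta\bar U_{n,d}$, only the terms containing $X_{ij}^2$ survive. Uniformly in $\theta$, each summand is bounded by
\begin{equation}
C(T_j-t_{i-1})^{-2\varepsilon'}\,\delta_j\,\frac{|X_{ij}^2-\bar X_{ij}^2|}{\Delta_n}.
\end{equation}
We take the supremum inside the sum and then take expectations. Lemma~\ref{lemA5} gives
\begin{equation}
E|X_{ij}^2-\bar X_{ij}^2|\le Cq_{ij}(\theta_0)\,\delta_j^{1+\alpha\wedge\beta}/(T_j-t_i).
\end{equation}
We then use $\delta_j/(T_j-t_i)<1$, so that the factor $\delta_j^{1+\alpha\wedge\beta}/(T_j-t_i)$ is at most $\Delta_d^{\alpha\wedge\beta}$.

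This leaves the extra weight $(T_j-t_{i-1})^{-2\varepsilon'}$. We handle it exactly as in Lemma~\ref{lemA1}(iii): since $q_{ij}(\theta_0)/\Delta_n\le C(T_j-t_{i-1})^{2\alpha-1}$, the weighted sum is bounded by $\sum_j\delta_j(T_j-t_{i-1})^{2\alpha-1-2\varepsilon'}$. By comparison with $\int_{t_{i-1}}^1(u-t_{i-1})^{2\alpha-1-2\varepsilon'}du$, this is bounded whenever $2\varepsilon'<2\alpha$. Averaging over $i$ then gives the bound $C\Delta_d^{\alpha\wedge\beta}$.

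\emph{Second estimate.} Here the difference is the sum over $j\in\Pi_i\setminus\Pi_i^r$, that is, over those $j$ with $T_{j-1}-t_i<r$. Uniformly in $\theta$, each summand is bounded in expectation by
\begin{equation}
C\delta_j(T_j-t_{i-1})^{-2\varepsilon'}\left[1+\frac{E\bar X_{ij}^2}{c\Delta_n}+q_{ij}(\theta)\right].
\end{equation}
Using the Gaussian moment formula and Lemma~\ref{lemA1}(iii), this is at most $C\delta_j(T_j-t_{i-1})^{2\alpha-1-2\varepsilon'}$, where we used $(T_j-t_{i-1})\le1$ to absorb the constant term.

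These maturity intervals lie in $(t_{i-1},\,t_i+r+\Delta_d]$. Comparison with the integral, using monotonicity and $T_j-t_{i-1}\ge u-t_{i-1}$ on $[T_{j-1},T_j]$, therefore bounds the sum by
\begin{equation}
C\int_0^{\min(r+\Delta_d+\Delta_n,\,1)}x^{2\alpha-1-2\varepsilon'}dx\le C(r+\Delta_d+\Delta_n)^{2\alpha-2\varepsilon'}.
\end{equation}
Choosing $\varepsilon'=\alpha/2$ gives the exponent $\alpha$.

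\emph{Main obstacle.} The remaining issue is the extra $\Delta_n$ in the integration range. Since $\Pi_i^r$ requires $T_{j-1}-t_i\ge r$, we should instead measure lags from $t_i$. For $u\ge T_{j-1}$ we have $T_j-t_{i-1}\ge T_j-t_i\ge u-t_i$, so the weight at $T_j-t_{i-1}$ is dominated by the weight at $u-t_i$. The comparison integral then runs over $(t_i,\,t_i+r+\Delta_d]$, which gives $(r+\Delta_d)^{\alpha}$ with no dependence on $n$. The second bound should also be capped at $1$ when $r+\Delta_d>1$, where it is trivial because the full sum is bounded by a constant.
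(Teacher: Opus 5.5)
Your proposal is correct and follows essentially the same route as the paper. You bound the $\theta$-uniform Hessian coefficients via Lemma~\ref{lemA2} (with exponent $\alpha/2$) and Lemma~\ref{lemA1}(iii) to get the weight $(T_j-t_{i-1})^{\alpha-1}$, insert the moment bounds of Lemma~\ref{lemA5}, and compare the maturity sums with integrals. Your expanded Hessian agrees with the paper's form $\nabla_\theta^2\log q_{ij}+X_{ij}^2\nabla_\theta^2(q_{ij}^{-1})+\frac14\nabla_\theta^2 q_{ij}$, and your final fix of measuring lags from $t_i$ via $T_j-t_{i-1}\ge u-t_i$, which yields $(r+\Delta_d)^\alpha$ with no $\Delta_n$, is exactly the comparison the paper uses.
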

\begin{proof}
Fix $i$ and $j\in\Pi_i$.
The chain rule gives
\begin{equation}
    \nabla_\theta^2\bigl(q_{ij}^{-1}\bigr)(\theta)
    =
    \frac{1}{q_{ij}(\theta)}
    \left[
        \nabla_\theta\log q_{ij}(\theta)
        \nabla_\theta\log q_{ij}(\theta)^\top
        -
        \nabla_\theta^2\log q_{ij}(\theta)
    \right].
\end{equation}
Applying Lemma~\ref{lemA2} with exponents $\alpha/2$ and
$\alpha$, and then using Lemma~\ref{lemA1}, we obtain
\begin{equation}
\label{eq:hessian-weighted-reciprocal-bound}
    q_{ij}(\theta_0)
    \sup_{\theta\in\Theta}
    \left|
        \nabla_\theta^2\bigl(q_{ij}^{-1}\bigr)(\theta)
    \right|
    \le
    C\frac{q_{ij}(\theta_0)}{\Delta_n}
    (T_j-t_{i-1})^{-\alpha}
    \le C(T_j-t_{i-1})^{\alpha-1}.
\end{equation}
Similarly, the identity
\begin{equation}
    \nabla_\theta^2 q_{ij}(\theta)
    =
    q_{ij}(\theta)
    \left[
        \nabla_\theta^2\log q_{ij}(\theta)
        +
        \nabla_\theta\log q_{ij}(\theta)
        \nabla_\theta\log q_{ij}(\theta)^\top
    \right],
\end{equation}
together with $\sup_{\theta\in\Theta}q_{ij}(\theta)\le C$
and Lemma~\ref{lemA2}, yields
\begin{equation}
    \sup_{\theta\in\Theta}
    \left[
        |\nabla_\theta^2\log q_{ij}(\theta)|
        +
        |\nabla_\theta^2 q_{ij}(\theta)|
    \right]
    \le C(T_j-t_{i-1})^{-\alpha}.
\end{equation}
Since $0<T_j-t_{i-1}\le1$ and $\alpha\le1/2$, we have
$(T_j-t_{i-1})^{-\alpha}
\le (T_j-t_{i-1})^{\alpha-1}$.
Combining this observation with
\eqref{eq:hessian-weighted-reciprocal-bound}, we obtain
\begin{equation}
\label{eq:hessian-regular-coefficient-bound}
    \sup_{\theta\in\Theta}
    \left[
        |\nabla_\theta^2\log q_{ij}(\theta)|
        +
        |\nabla_\theta^2 q_{ij}(\theta)|
    \right]
    +
    q_{ij}(\theta_0)
    \sup_{\theta\in\Theta}
    \left|
        \nabla_\theta^2\bigl(q_{ij}^{-1}\bigr)(\theta)
    \right|
    \le C(T_j-t_{i-1})^{\alpha-1}.
\end{equation}

Since $T_j-t_i>\delta_j$, Lemma~\ref{lemA5} implies
\begin{equation}
\label{eq:hessian-squared-increment-error}
    E|X_{ij}^2-\bar X_{ij}^2|
    \le
    Cq_{ij}(\theta_0)
    \frac{\delta_j^{1+\alpha\wedge\beta}}{T_j-t_i}
    \le
    C\Delta_d^{\alpha\wedge\beta}q_{ij}(\theta_0), \quad E[\bar X_{ij}^2]
    \le Cq_{ij}(\theta_0).
\end{equation}

Differentiating the contrast gives
\begin{equation}
\label{eq:contrast-hessian-formula}
    \nabla_\theta^2 U_{n,d}(\theta)
    =
    \frac1n\sum_{i=1}^n\sum_{j\in\Pi_i}\delta_j
    \left[
        \nabla_\theta^2\log q_{ij}(\theta)
        +X_{ij}^2
        \nabla_\theta^2\bigl(q_{ij}^{-1}\bigr)(\theta)
        +\frac14\nabla_\theta^2q_{ij}(\theta)
    \right].
\end{equation}
Replacing $X_{ij}$ by $\bar X_{ij}$ gives the corresponding
formula for $\nabla_\theta^2\bar U_{n,d}(\theta)$.
Restricting the inner sum further to $j\in\Pi_i^r$ gives
the formula for $\nabla_\theta^2\bar U_{n,d}^r(\theta)$.

By the definitions of the criteria,
\begin{equation}
    \nabla_\theta^2 U_{n,d}(\theta)
    -
    \nabla_\theta^2 \bar U_{n,d}(\theta)
    =
    \frac1n\sum_{i=1}^n\sum_{j\in\Pi_i}
    \delta_j(X_{ij}^2-\bar X_{ij}^2)
    \nabla_\theta^2\bigl(q_{ij}^{-1}\bigr)(\theta).
\end{equation}
Combining these identities with
\eqref{eq:hessian-regular-coefficient-bound} and
\eqref{eq:hessian-squared-increment-error}, we obtain
\begin{align}
\label{eq:hessian-transfer-sum}
    &E\left[
        \sup_{\theta\in\Theta}
        \left|
            \nabla_\theta^2 U_{n,d}(\theta)
            -
            \nabla_\theta^2 \bar U_{n,d}(\theta)
        \right|
    \right]
    \le
    \frac{C\Delta_d^{\alpha\wedge\beta}}{n}
    \sum_{i=1}^n\sum_{j\in\Pi_i}
    \delta_j(T_j-t_{i-1})^{\alpha-1},\\
\label{eq:hessian-truncation-sum}
    &E\left[
        \sup_{\theta\in\Theta}
        \left|
            \nabla_\theta^2\bar U_{n,d}(\theta)
            -
            \nabla_\theta^2\bar U_{n,d}^r(\theta)
        \right|
    \right]
    \le
    \frac Cn\sum_{i=1}^n
    \sum_{j\in\Pi_i\setminus\Pi_i^r}
    \delta_j(T_j-t_{i-1})^{\alpha-1}.
\end{align}

It remains to estimate the maturity sums.
For $j\in\Pi_i$ and $u\in[T_{j-1},T_j]$,
we have $T_j-t_{i-1}\ge u-t_i>0$.
Thus, by disjointness of the maturity intervals,
\begin{equation}
\label{eq:hessian-full-maturity-sum}
    \sum_{j\in\Pi_i}
    \delta_j(T_j-t_{i-1})^{\alpha-1}
    \le
    \int_{t_i}^1(u-t_i)^{\alpha-1}\,du
    \le \frac1\alpha.
\end{equation}
Substituting \eqref{eq:hessian-full-maturity-sum} into
\eqref{eq:hessian-transfer-sum} proves the first assertion.

If $j\in\Pi_i\setminus\Pi_i^r$, then
$T_j-t_i<r+\Delta_d$, so the same comparison gives
\begin{equation}
\label{ESOFdnsovkdWM}
    \sum_{j\in\Pi_i\setminus\Pi_i^r}
    \delta_j(T_j-t_{i-1})^{\alpha-1}
    \le
    \int_0^{\min\{r+\Delta_d,1\}}x^{\alpha-1}\,dx
    \le \frac{(r+\Delta_d)^\alpha}{\alpha}.
\end{equation}
Substituting this bound into
\eqref{eq:hessian-truncation-sum} proves the second assertion.
\end{proof}

For $j,\ell\in\Pi_i$, define
\begin{equation}
\label{eq:cross-maturity-q}
    q_i^{j\ell}(\theta)
    \coloneqq
    \int_{t_{i-1}}^{t_i}
    \bar k_\theta^j(s)\bar k_\theta^\ell(s)ds.
\end{equation}
In particular, $q_i^{jj}(\theta)=q_{ij}(\theta)$.

For $\theta\in\Theta$ and $t\in[0,1]$, define the
deterministic matrices
\begin{equation}
\label{eq:discrete-hessian-matrix}
    \mathcal B_{n,d}(\theta)
    \coloneqq
    \frac1n\sum_{i=1}^n\sum_{j\in\Pi_i}\delta_j
    \left(1+\frac12q_{ij}(\theta)\right)
    \nabla_\theta\log q_{ij}(\theta)
    \nabla_\theta\log q_{ij}(\theta)^\top
\end{equation}
and
\begin{equation}
\label{eq:discrete-score-covariance}
\begin{aligned}
    \mathcal D_{n,d}(t,\theta)
    \coloneqq{}&
    \frac1n\sum_{i=1}^{\lfloor nt\rfloor}
    \sum_{j,\ell\in\Pi_i}\delta_j\delta_\ell
    \left[
        \frac{2q_i^{j\ell}(\theta)^2}
             {q_{ij}(\theta)q_{i\ell}(\theta)}
        +q_i^{j\ell}(\theta)
    \right]
    \nabla_\theta\log q_{ij}(\theta)
    \nabla_\theta\log q_{i\ell}(\theta)^\top.
\end{aligned}
\end{equation}
Their limiting counterparts are
\begin{equation}
\label{eq:parameterized-hessian-limit}
    B(\theta)
    \coloneqq
    4\int_0^1(1-x)
    \nabla_\theta\log k_\theta(x)
    \nabla_\theta\log k_\theta(x)^\top dx
\end{equation}
and
\begin{equation}
\label{eq:parameterized-score-covariance-limit}
    D(t,\theta)
    \coloneqq
    8\int_0^t
    \left(
        \int_0^{1-s}\nabla_\theta\log k_\theta(x)dx
    \right)
    \left(
        \int_0^{1-s}\nabla_\theta\log k_\theta(x)dx
    \right)^\top ds.
\end{equation}
By \eqref{eq:limiting-hessian} and
\eqref{eq:limiting-score-covariance},
\begin{equation}
    B(\theta_0)=B,
    \qquad
    D(1,\theta_0)=D.
\end{equation}

\begin{lem}
\label{lem:deterministic-matrix-limits}
As $n,d\to\infty$,
\begin{equation}
\label{eq:uniform-hessian-matrix-limit}
    \sup_{\theta\in\Theta}
    |\mathcal B_{n,d}(\theta)-B(\theta)|
    \longrightarrow0
\end{equation}
and, for every $t\in[0,1]$,
\begin{equation}
\label{eq:uniform-score-covariance-limit}
    \sup_{\theta\in\Theta}
    |\mathcal D_{n,d}(t,\theta)-D(t,\theta)|
    \longrightarrow0.
\end{equation}
\end{lem}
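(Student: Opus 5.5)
We follow the truncation scheme of the uniform-convergence proof in Appendix~\ref{A2}. Fix $r\in(0,1)$ and split each sum into the interior part $j\in\Pi_i^r$ and the boundary part $j\in\Pi_i\setminus\Pi_i^r$. Do the same for the limits, restricting $x\ge r$ in $B(\theta)$ and $x\ge r$ in the inner integrals of $D(t,\theta)$. The interior parts will converge uniformly in $\theta$ for each fixed $r$. The boundary parts will be bounded uniformly in $n,d,\theta$ by a quantity tending to zero as $r\downarrow0$. Letting $n,d\to\infty$ and then $r\downarrow0$ then gives the claim.

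\emph{Interior part.} For $j\in\Pi_i^r$ all lags lie in $[r,1]$, so $k_\theta$, $\nabla_\theta k_\theta$ and their reciprocals are bounded and uniformly continuous on $\Theta\times[r,1]$ by Assumption~\ref{ass:K3}. As in \eqref{eq:ulln-q-approximation}, we get uniformly in $\theta$, $i$ and $j\in\Pi_i^r$
\begin{equation}
\nabla_\theta\log q_{ij}(\theta)=2\nabla_\theta\log k_\theta(T_j-t_i)+o(1),
\qquad
\frac{q_i^{j\ell}(\theta)^2}{q_{ij}(\theta)q_{i\ell}(\theta)}=1+o(1).
\end{equation}
The second relation follows from $q_i^{j\ell}=\Delta_n k_\theta(T_j-t_i)k_\theta(T_\ell-t_i)(1+o(1))$. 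Also $q_{ij},q_i^{j\ell}\le C_r\Delta_n$. Hence, up to $o(1)$, the interior part of $\mathcal B_{n,d}$ is the Riemann sum $\frac1n\sum_i\sum_j\delta_j\,4\nabla\log k_\theta\nabla\log k_\theta^\top(T_j-t_i)$. The covering argument of Appendix~\ref{A2} shows it converges to $4\int_r^1(1-x)(\cdots)dx$, since the region $\{u-t\ge r\}$ has $x$-density $1-x$. For $\mathcal D_{n,d}$ the bracket becomes $2+O(\Delta_n)$, and the double sum factorizes:
\begin{equation}
\frac1n\sum_{i\le\lfloor nt\rfloor}8\Bigl(\sum_{j\in\Pi_i^r}\delta_j\nabla\log k_\theta(T_j-t_i)\Bigr)\Bigl(\sum_{\ell\in\Pi_i^r}\delta_\ell\nabla\log k_\theta(T_\ell-t_i)\Bigr)^{\!\top}.
\end{equation}
Each inner sum is a Riemann sum converging uniformly in $i$ and $\theta$ to $\int_r^{1-t_i}\nabla\log k_\theta(x)dx$, using $T_d\to1$. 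The outer Riemann sum over $s=t_i\in[0,t]$ then converges to the truncated $D(t,\theta)$.

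\emph{Boundary part.} This is the main obstacle, because $\nabla_\theta\log q_{ij}$ may blow up near the diagonal and the $\mathcal D$ sum is a double sum. We control it with Lemma~\ref{lemA2} at exponent $\varepsilon$ small (e.g. $\varepsilon<\alpha/2$), which gives $|\nabla\log q_{ij}|\le C(T_j-t_{i-1})^{-\varepsilon}$. Together with $q_{ij}\le C$ and the comparison over disjoint maturity intervals as in \eqref{ESOFdnsovkdWM}, the boundary part of $\mathcal B_{n,d}$ is at most $C\int_0^{r+\Delta_d}x^{-2\varepsilon}dx\to0$. For $\mathcal D_{n,d}$, Cauchy--Schwarz gives $q_i^{j\ell}\le\sqrt{q_{ij}q_{i\ell}}$, so the bracket is bounded by $2+C$. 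Any term with $j$ or $\ell$ in the boundary set is then bounded by
\begin{equation}
C\Bigl(\sum_{j\in\Pi_i\setminus\Pi_i^r}\delta_j(T_j-t_{i-1})^{-\varepsilon}\Bigr)\Bigl(\sum_{\ell\in\Pi_i}\delta_\ell(T_\ell-t_{i-1})^{-\varepsilon}\Bigr)\le C(r+\Delta_d)^{1-\varepsilon},
\end{equation}
uniformly in $i$. The corresponding limit pieces are bounded by $C\int_0^r L_1(x)dx\to0$ by the subpolynomial bound \eqref{eq:L-subpolynomial}, which also justifies integrability in $B(\theta)$ and $D(t,\theta)$. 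Combining the interior and boundary estimates and letting $r\downarrow0$ gives both limits, uniformly in $\theta$ and for each fixed $t$.
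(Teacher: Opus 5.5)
Your proposal is correct and follows essentially the same route as the paper's proof. The shared steps are: truncation at $r$; boundary pieces controlled by Lemma~\ref{lemA2} together with integral comparison over the disjoint maturity intervals; interior pieces handled by uniform continuity on $\Theta\times[r,1]$ and Riemann sums; and Assumption~\ref{ass:K4} for the truncation error of the limits. Your factorization of the $\mathcal D$ double sum into single maturity sums is a tidy alternative to the paper's box-based Riemann sum.

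There are two harmless slips. First, the $\mathcal D$ bracket is $2+o(1)$, not $2+O(\Delta_n)$, since the ratio $q_i^{j\ell}(\theta)^2/\{q_{ij}(\theta)q_{i\ell}(\theta)\}$ only tends to $1$ at the rate of the modulus of continuity. Second, the $B$ truncation piece is bounded by $C\int_0^r L_1(x)^2dx$ rather than $C\int_0^r L_1(x)dx$; this still tends to zero by \eqref{eq:L-subpolynomial}.
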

\begin{proof}
Fix $t\in[0,1]$.
For $r\in(0,1)$, let $\mathcal B_{n,d}^r$ and
$\mathcal D_{n,d}^r$ be obtained from
\eqref{eq:discrete-hessian-matrix} and \eqref{eq:discrete-score-covariance}, respectively,
by replacing each maturity index set $\Pi_i$ with $\Pi_i^r$.
Define their limiting counterparts by
\begin{align}
    B^r(\theta)
    &\coloneqq
    4\int_r^1(1-x)
    \nabla_\theta\log k_\theta(x)
    \nabla_\theta\log k_\theta(x)^\top dx,\\
    D^r(t,\theta)
    &\coloneqq
    8\int_0^t
    \left(
        \int_0^{1-s}
        \mathbf 1_{\{x\ge r\}}
        \nabla_\theta\log k_\theta(x)dx
    \right)
    \left(
        \int_0^{1-s}
        \mathbf 1_{\{x\ge r\}}
        \nabla_\theta\log k_\theta(x)dx
    \right)^\top ds.
\end{align}
We first control the truncation errors and then establish
uniform convergence on the truncated region.

By the Cauchy--Schwarz inequality and Lemma~\ref{lemA1},
\begin{equation}
    q_i^{j\ell}(\theta)
    \le
    \sqrt{q_{ij}(\theta)q_{i\ell}(\theta)}
    \le C.
\end{equation}
Applying Lemma~\ref{lemA2} with exponent $\alpha/2$
and using $\alpha\le1/2$, we obtain
\begin{align}
\label{eq:discrete-matrix-truncation}
    \sup_{\theta\in\Theta}
    |\mathcal B_{n,d}(\theta)-\mathcal B_{n,d}^r(\theta)|
    &\le
    \frac Cn\sum_{i=1}^n
    \sum_{j\in\Pi_i\setminus\Pi_i^r}
    \delta_j(T_j-t_{i-1})^{\alpha-1}\\
    &\le C(r+\Delta_d)^\alpha,
\end{align}
and
\begin{align}
\label{eq:discrete-covariance-truncation}
    &\sup_{\theta\in\Theta}
    |\mathcal D_{n,d}(t,\theta)
      -\mathcal D_{n,d}^r(t,\theta)|\\
    &\qquad\le
    \frac Cn\sum_{i=1}^{\lfloor nt\rfloor}
    \left(
        \sum_{j\in\Pi_i\setminus\Pi_i^r}
        \delta_j(T_j-t_{i-1})^{\alpha-1}
    \right)
    \left(
        \sum_{\ell\in\Pi_i}
        \delta_\ell(T_\ell-t_{i-1})^{\alpha-1}
    \right)\\
    &\qquad\le C(r+\Delta_d)^\alpha.
\end{align}
For the covariance bound, each omitted pair has at least
one index in $\Pi_i\setminus\Pi_i^r$.
The final inequalities follow from
\eqref{eq:hessian-full-maturity-sum} and
\eqref{ESOFdnsovkdWM}.

For the limiting matrices,
Assumption~\ref{ass:K4} gives
\begin{equation}
    |B(\theta)-B^r(\theta)|
    \le C\int_0^r x^{\alpha-1}dx
    \le Cr^\alpha,
\end{equation}
uniformly in $\theta\in\Theta$.
Expanding the difference of the outer products and applying
the triangle inequality, we obtain
\begin{align}
    |D(t,\theta)-D^r(t,\theta)|
    &\le
    16\int_0^t
    \left(
        \int_0^{r\wedge(1-s)}
        |\nabla_\theta\log k_\theta(x)|dx
    \right)
    \left(
        \int_0^{1-s}
        |\nabla_\theta\log k_\theta(y)|dy
    \right)ds\\
    &\le Cr^\alpha,
\end{align}
uniformly in $\theta\in\Theta$, where the last inequality
follows from Assumption~\ref{ass:K4}.
Hence
\begin{equation}
\label{eq:limiting-matrix-truncation}
    \sup_{\theta\in\Theta}|B(\theta)-B^r(\theta)|
    +
    \sup_{\theta\in\Theta}|D(t,\theta)-D^r(t,\theta)|
    \le Cr^\alpha.
\end{equation}

We now fix $r\in(0,1)$.
For $h>0$, define
\begin{equation}
    \omega^r(h)
    \coloneqq
    \sup_{\substack{\theta\in\Theta,\ x,y\in[r,1]\\
                    |x-y|\le h}}
    \left[
        |k_\theta(x)-k_\theta(y)|
        +|\nabla_\theta k_\theta(x)
          -\nabla_\theta k_\theta(y)|
    \right].
\end{equation}
Assumptions~\ref{ass:K0} and \ref{ass:K3}, together with
compactness, imply that $\omega^r(h)\to0$ as $h\downarrow0$.

For $j,\ell\in\Pi_i^r$, all lags in the defining integrals
belong to $[r,1]$ and differ from their corresponding
values $T_j-t_i$ and $T_\ell-t_i$ by at most
$\Delta_n+\Delta_d$.
Applying the modulus of continuity $\omega^r$ to the
defining integrals for $q_{ij}$ and $q_i^{j\ell}$
and to \eqref{WSDFOISnfsdgoiNAOisfgndfgi},
and using $c\le q_{ij}(\theta)/\Delta_n\le C_r$,
we obtain
\begin{align}
\label{eq:matrix-riemann-approximation}
    &\sup_{\theta\in\Theta}
    \left|
        \mathcal B_{n,d}^r(\theta)
        -
        \frac4n\sum_{i=1}^n\sum_{j\in\Pi_i^r}
        \delta_j
        \nabla_\theta\log k_\theta(T_j-t_i)
        \nabla_\theta\log k_\theta(T_j-t_i)^\top
    \right|\\
    &\quad+
    \sup_{\theta\in\Theta}
    \left|
        \mathcal D_{n,d}^r(t,\theta)
        -
        \frac8n\sum_{i=1}^{\lfloor nt\rfloor}
        \sum_{j,\ell\in\Pi_i^r}
        \delta_j\delta_\ell
        \nabla_\theta\log k_\theta(T_j-t_i)
        \nabla_\theta\log k_\theta(T_\ell-t_i)^\top
    \right|\\
    &\qquad\le
    C_r\left[
        \omega^r(\Delta_n+\Delta_d)+\Delta_n
    \right].
\end{align}
Here $\nabla_\theta\log q_{ij}(\theta)$ and
$q_i^{j\ell}(\theta)^2/
\{q_{ij}(\theta)q_{i\ell}(\theta)\}$
are approximated by
$2\nabla_\theta\log k_\theta(T_j-t_i)$ and $1$,
respectively, each with error at most
$C_r\omega^r(\Delta_n+\Delta_d)$.
The two logarithmic derivatives give the factor $4$;
the additional coefficient $2$ in the covariance matrix
gives the factor $8$.
Moreover,
$q_{ij}(\theta)+q_i^{j\ell}(\theta)\le C_r\Delta_n$
on the truncated region.
Thus the terms involving $q_{ij}(\theta)/2$ and $q_i^{j\ell}(\theta)$
contribute at most $C_r\Delta_n$.

The Riemann-sum argument following
\eqref{eq:ulln-mean-riemann} applies to both sums,
using rectangles for the first and boxes for the second.
On the truncated domains, the integrands are uniformly
bounded in $\theta$, with variation within each cell
at most $C_r\omega^r(\Delta_n+\Delta_d)$.
The uncovered area or volume is bounded by
$C[\Delta_n+\Delta_d+(1-T_d)]$.
For the second sum, this includes the time strip
$(\lfloor nt\rfloor/n,t]$, whose width is at most $\Delta_n$.
Changing variables from maturities to lags and applying
Fubini's theorem identifies the limiting integrals as
$B^r(\theta)$ and $D^r(t,\theta)$, respectively.
Consequently,
\begin{align}
\label{eq:truncated-matrix-convergence}
    &\sup_{\theta\in\Theta}
    |\mathcal B_{n,d}^r(\theta)-B^r(\theta)|
    +
    \sup_{\theta\in\Theta}
    |\mathcal D_{n,d}^r(t,\theta)-D^r(t,\theta)|\\
    &\qquad\le
    C_r\left[
        \omega^r(\Delta_n+\Delta_d)
        +\Delta_n+\Delta_d+(1-T_d)
    \right]
    \longrightarrow0.
\end{align}

Combining
\eqref{eq:discrete-matrix-truncation},
\eqref{eq:discrete-covariance-truncation},
\eqref{eq:limiting-matrix-truncation}, and
\eqref{eq:truncated-matrix-convergence}
with the triangle inequality gives
\begin{equation}
    \limsup_{n,d\to\infty}
    \left[
        \sup_{\theta\in\Theta}
        |\mathcal B_{n,d}(\theta)-B(\theta)|
        +
        \sup_{\theta\in\Theta}
        |\mathcal D_{n,d}(t,\theta)-D(t,\theta)|
    \right]
    \le Cr^\alpha.
\end{equation}
Letting $r\downarrow0$ proves both assertions.
\end{proof}

\begin{lem}
\label{lem:integrated-hessian}
The matrices $B_{n,d}$ 
converge in probability to $B$.
\end{lem}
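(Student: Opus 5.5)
Since $B_{n,d}=\int_0^1\nabla_\theta^2U_{n,d}(\theta_0+s(\hat\theta_{n,d}-\theta_0))ds$ and $\hat\theta_{n,d}\to\theta_0$ in probability by Corollary~\ref{cor:consistency}, it suffices to prove two things. First, $\sup_{\theta\in\Theta}|\nabla_\theta^2U_{n,d}(\theta)-\mathcal H(\theta)|\to0$ in probability for some deterministic matrix function $\mathcal H$ that is continuous at $\theta_0$. Second, $\mathcal H(\theta_0)=B$. Then $|B_{n,d}-B|\le\sup_\theta|\nabla^2_\theta U_{n,d}-\mathcal H|+\int_0^1|\mathcal H(\theta_0+s(\hat\theta_{n,d}-\theta_0))-\mathcal H(\theta_0)|ds$, and both terms vanish in probability.

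By Lemma~\ref{lemC1}, the first reduction step replaces $U_{n,d}$ by $\bar U_{n,d}$ at cost $C\Delta_d^{\alpha\wedge\beta}$ in $L^1$ uniformly in $\theta$. It then replaces $\bar U_{n,d}$ by $\bar U_{n,d}^r$ at cost $C(r+\Delta_d)^\alpha$. On the truncated region, I will use the Hessian formula \eqref{eq:contrast-hessian-formula} with $\bar X_{ij}$. The mean is $E[\nabla_\theta^2\bar U^r_{n,d}(\theta)]$, obtained by substituting $E\bar X_{ij}^2=q_{ij}(\theta_0)+q_{ij}(\theta_0)^2/4$.

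\textbf{Centered part.} The proxy vectors are independent across $i$. Each summand $\delta_j(\bar X_{ij}^2-E\bar X_{ij}^2)\nabla^2_\theta(q_{ij}^{-1})(\theta)$ is bounded in $L^2$ by $C_r\delta_j$ through \eqref{eq:hessian-regular-coefficient-bound}. The same variance computation as \eqref{eq:ulln-centered-pointwise} therefore gives pointwise $L^2$ error $C_r/n$. For uniformity, I will use a $\rho$-net together with equicontinuity in $\theta$ on the truncated region. On $[r,1]$, the functions $\nabla_\theta k$ and $\nabla^2_\theta k$ are uniformly continuous on $\Theta\times[r,1]$ by \ref{ass:K3}. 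Hence $\theta\mapsto\nabla^2_\theta(q_{ij}^{-1})(\theta)\cdot\Delta_n$ and $\nabla^2\log q_{ij}$ have a common modulus of continuity, and the argument of \eqref{eq:ulln-modulus} applies verbatim.

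\textbf{Mean part.} This is the main bookkeeping step. On $\Pi_i^r$, I will approximate $q_{ij}(\theta)/\Delta_n$ by $k_\theta(T_j-t_i)^2$, $\nabla_\theta\log q_{ij}$ by $2\nabla_\theta\log k_\theta(T_j-t_i)$, and $\nabla^2_\theta\log q_{ij}$ by $2\nabla_\theta^2\log k_\theta(T_j-t_i)$, each with error $C_r\omega^r(\Delta_n+\Delta_d)$, as in Lemma~\ref{lem:deterministic-matrix-limits}. The $q/4$ terms are $O(C_r\Delta_n)$. The Riemann-sum argument after \eqref{eq:ulln-mean-riemann} then gives uniform convergence to
\begin{equation}
\mathcal H^r(\theta)=\int_r^1(1-x)\Bigl[2\nabla^2_\theta\log k_\theta(x)+\rho_\theta(x)\bigl(4\nabla\log k_\theta\nabla\log k_\theta^\top-2\nabla^2_\theta\log k_\theta\bigr)(x)\Bigr]dx,
\end{equation}
where $\rho_\theta=k_{\theta_0}^2/k_\theta^2$. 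This expression is $\nabla^2_\theta U^r(\theta)$. I define $\mathcal H$ as the same integral over $(0,1)$. By \ref{ass:K4} and Lemma~\ref{lemA1}, the integrand is dominated by $CL_2(x)^2x^{2\alpha-1}$, which is integrable. Therefore $\sup_\theta|\mathcal H-\mathcal H^r|\to0$ as $r\downarrow0$, and $\mathcal H$ is continuous by dominated convergence.

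\textbf{Conclusion and obstacle.} At $\theta_0$ we have $\rho\equiv1$, so the $\nabla^2\log k$ terms cancel and $\mathcal H(\theta_0)=4\int_0^1(1-x)\nabla\log k_{\theta_0}\nabla\log k_{\theta_0}^\top dx=B$. Letting $n,d\to\infty$ and then $r\downarrow0$ finishes the argument. I expect the main obstacle to be the uniform-in-$\theta$ continuity of second derivatives needed for the net argument and the Riemann approximation, since \ref{ass:K4} only controls sizes. I will handle it by working only on $[r,1]$, where \ref{ass:K3} gives uniform continuity on compacts, and by absorbing the region near the diagonal entirely into the second estimate of Lemma~\ref{lemC1}.
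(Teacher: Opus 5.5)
Your argument is correct, but it takes a different route from the paper.

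\textbf{Your route.} You prove a global Hessian law of large numbers, $\sup_{\theta\in\Theta}|\nabla_\theta^2U_{n,d}(\theta)-\mathcal H(\theta)|\to0$ in probability, with $\mathcal H=\nabla_\theta^2U$ computed explicitly. You then use continuity of $\mathcal H$ at $\theta_0$.

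\textbf{The paper's route.} The paper never identifies a limit away from $\theta_0$. It splits $|B_{n,d}-B|$ into three terms:
\begin{enumerate}
\item the transfer term $|B_{n,d}-\bar B_{n,d}|$, handled by Lemma~\ref{lemC1};
\item a local oscillation term $|\bar B_{n,d}-\nabla_\theta^2\bar U_{n,d}(\theta_0)|$, handled by a modulus bound on a $\rho$-ball around $\theta_0$ in the truncated region, the truncation estimate of Lemma~\ref{lemC1}, and consistency, so no net over $\Theta$ is needed;
\item a pointwise term at $\theta_0$.
\end{enumerate}
For the last term, the exact moment $E[\bar X_{ij}^2]=q_{ij}(\theta_0)+q_{ij}(\theta_0)^2/4$ makes the $\nabla_\theta^2\log q_{ij}$ contributions cancel already at the discrete level. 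This gives $E[\nabla_\theta^2\bar U_{n,d}(\theta_0)]=\mathcal B_{n,d}(\theta_0)$, which involves only first parameter derivatives. Its limit $B$ is already supplied by Lemma~\ref{lem:deterministic-matrix-limits}, and the centered part is $O_P(n^{-1/2})$ without any truncation.

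\textbf{What the difference costs you.} In your version this cancellation occurs only in the limit, through $\rho_{\theta_0}\equiv1$, so you carry extra bookkeeping. Two small points need fixing.
\begin{itemize}
\item Your modulus $\omega^r$ must also control $\nabla_\theta^2k$ in the lag variable. The modulus in Lemma~\ref{lem:deterministic-matrix-limits} covers only $k$ and $\nabla_\theta k$.
\item The dominating function for $\mathcal H$ should be $C(L_1(x)^2+L_2(x))x^{2\alpha-1}$, not $CL_2(x)^2x^{2\alpha-1}$, since $L_1$ is not assumed to be controlled by $L_2$. This is harmless, because both functions are integrable by \eqref{eq:L-subpolynomial}.
\end{itemize}

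\textbf{What your route buys.} You obtain a stronger by-product: $\nabla_\theta^2U_{n,d}\to\nabla_\theta^2U$ uniformly on all of $\Theta$. This is the Hessian analogue of Theorem~\ref{thm:uniform-contrast}, in the standard M-estimation form. The paper's route is more economical for the lemma as stated.
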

\begin{proof}
Define
\begin{equation}
    \bar B_{n,d}
    \coloneqq
    \int_0^1
    \nabla_\theta^2\bar U_{n,d}
    \bigl(
        \theta_0+s(\hat\theta_{n,d}-\theta_0)
    \bigr)ds.
\end{equation}
By the triangle inequality,
\begin{equation}
\label{eq:integrated-hessian-decomposition}
    |B_{n,d}-B|
    \le
    |B_{n,d}-\bar B_{n,d}|+
    |\bar B_{n,d}
      -\nabla_\theta^2\bar U_{n,d}(\theta_0)|
    +
    |\nabla_\theta^2\bar U_{n,d}(\theta_0)-B|.
\end{equation}
Convexity of $\Theta$ and Lemma~\ref{lemC1} give
\begin{equation}
\label{eq:integrated-hessian-transfer}
    E|B_{n,d}-\bar B_{n,d}|
    \le
    E\left[
        \sup_{\theta\in\Theta}
        |\nabla_\theta^2U_{n,d}(\theta)
          -\nabla_\theta^2\bar U_{n,d}(\theta)|
    \right]
    \le C\Delta_d^{\alpha\wedge\beta}.
\end{equation}

We next establish convergence of the proxy Hessian at
$\theta_0$.
Since
$E[\bar X_{ij}^2]
=q_{ij}(\theta_0)+q_{ij}(\theta_0)^2/4$,
\eqref{eq:contrast-hessian-formula} and the logarithmic
derivative identities give\\
$E[\nabla_\theta^2\bar U_{n,d}(\theta_0)]=\mathcal B_{n,d}(\theta_0)$.
Centering the same representation yields
\begin{equation}
    \nabla_\theta^2\bar U_{n,d}(\theta_0)
    -\mathcal B_{n,d}(\theta_0)
    =
    \frac1n\sum_{i=1}^n\sum_{j\in\Pi_i}\delta_j
    \bigl(\bar X_{ij}^2-E[\bar X_{ij}^2]\bigr)
    \nabla_\theta^2\bigl(q_{ij}^{-1}\bigr)(\theta_0).
\end{equation}
The Gaussian moment formula and
\eqref{eq:hessian-weighted-reciprocal-bound}, together with
\eqref{eq:hessian-full-maturity-sum}, imply, uniformly in $i$,
\begin{align}
    \left|
        \sum_{j\in\Pi_i}\delta_j
        \bigl(\bar X_{ij}^2-E[\bar X_{ij}^2]\bigr)
        \nabla_\theta^2\bigl(q_{ij}^{-1}\bigr)(\theta_0)
    \right|_2
    &\le
    C\sum_{j\in\Pi_i}\delta_jq_{ij}(\theta_0)
    \left|
        \nabla_\theta^2\bigl(q_{ij}^{-1}\bigr)(\theta_0)
    \right|\\
    &\le
    C\sum_{j\in\Pi_i}
    \delta_j(T_j-t_{i-1})^{\alpha-1}
    \le C.
\end{align}
Independence of the centered row sums therefore gives
\begin{equation}
\label{eq:gaussian-hessian-centered}
    E\left|
        \nabla_\theta^2\bar U_{n,d}(\theta_0)
        -\mathcal B_{n,d}(\theta_0)
    \right|^2
    \le\frac Cn.
\end{equation}
By Lemma~\ref{lem:deterministic-matrix-limits},
$\mathcal B_{n,d}(\theta_0)\to B(\theta_0)=B$.
Consequently,
\begin{equation}
\label{eq:gaussian-hessian-pointwise}
    \nabla_\theta^2\bar U_{n,d}(\theta_0)
    \xrightarrow{p}B.
\end{equation}

It remains to control the variation along the random
segment. Fix $r\in(0,1)$.
By Assumptions~\ref{ass:K0} and \ref{ass:K3} and compactness,
for every $\varepsilon_k>0$, there exists $\rho>0$ such that
\begin{equation}
\label{eq:truncated-hessian-parameter-continuity}
    \sup_{x\in[r,1]}
    \left[
        |k_\theta(x)-k_{\theta_0}(x)|
        +|\nabla_\theta k_\theta(x)
          -\nabla_\theta k_{\theta_0}(x)|
        +|\nabla_\theta^2k_\theta(x)
          -\nabla_\theta^2k_{\theta_0}(x)|
    \right]
    \le\varepsilon_k
\end{equation}
whenever $\theta\in\Theta$ and $|\theta-\theta_0|<\rho$.
Using the integral representations in Lemma~\ref{lemA2}
and the bounds $c\le q_{ij}(\theta)/\Delta_n\le C_r$
on the truncated region, we obtain from
\eqref{eq:truncated-hessian-parameter-continuity} that
\begin{align}
    &|\nabla_\theta^2\log q_{ij}(\theta)
      -\nabla_\theta^2\log q_{ij}(\theta_0)|+
    q_{ij}(\theta_0)
    |\nabla_\theta^2(q_{ij}^{-1})(\theta)
      -\nabla_\theta^2(q_{ij}^{-1})(\theta_0)|\\
    &\quad+
    |\nabla_\theta^2q_{ij}(\theta)
      -\nabla_\theta^2q_{ij}(\theta_0)|
    \le C_r\varepsilon_k
\end{align}
for all $i$, $j\in\Pi_i^r$, and
$\theta\in\Theta$ with $|\theta-\theta_0|<\rho$.
The constant $C_r$ is independent of $\varepsilon_k$ and
$\rho$.
Applying this bound to
\eqref{eq:contrast-hessian-formula} and taking expectations
gives
\begin{align}
\label{eq:truncated-hessian-local-modulus}
    E\left[
        \sup_{\substack{\theta\in\Theta\\
                        |\theta-\theta_0|<\rho}}
        |\nabla_\theta^2\bar U_{n,d}^r(\theta)
          -\nabla_\theta^2\bar U_{n,d}^r(\theta_0)|
    \right]
    &\le
    \frac{C_r\varepsilon_k}{n}
    \sum_{i=1}^n\sum_{j\in\Pi_i^r}\delta_j
    \left[
        1+\frac{E[\bar X_{ij}^2]}{q_{ij}(\theta_0)}
        +\frac14
    \right]\\
    &\le C_r\varepsilon_k.
\end{align}
The last inequality follows from the Gaussian moment
formula, Lemma~\ref{lemA1}, and
$\sum_i\sum_{j\in\Pi_i^r}\delta_j/n\le1$.

By the triangle inequality and Lemma~\ref{lemC1},
\begin{align}
\label{eq:gaussian-hessian-local-modulus}
    E\left[
        \sup_{\substack{\theta\in\Theta\\
                        |\theta-\theta_0|<\rho}}
        |\nabla_\theta^2\bar U_{n,d}(\theta)
          -\nabla_\theta^2\bar U_{n,d}(\theta_0)|
    \right]
    &\le
    2E\left[
        \sup_{\theta\in\Theta}
        |\nabla_\theta^2\bar U_{n,d}(\theta)
          -\nabla_\theta^2\bar U_{n,d}^r(\theta)|
    \right]
    +C_r\varepsilon_k\\
    &\le
    C(r+\Delta_d)^\alpha+C_r\varepsilon_k.
\end{align}
The definition of $\bar B_{n,d}$, convexity of $\Theta$,
and Markov's inequality now yield, for every $\varepsilon>0$,
\begin{equation}
    P\left(
        |\bar B_{n,d}
          -\nabla_\theta^2\bar U_{n,d}(\theta_0)|
        >\varepsilon
    \right)
    \le
    P(|\hat\theta_{n,d}-\theta_0|\ge\rho)
    +
    \frac{C(r+\Delta_d)^\alpha+C_r\varepsilon_k}{\varepsilon}.
\end{equation}
For fixed $r$ and $\varepsilon_k$, consistency makes the first
term tend to zero.
Letting $n,d\to\infty$, then $\varepsilon_k\downarrow0$,
and finally $r\downarrow0$, we obtain
\begin{equation}
    \bar B_{n,d}
    -\nabla_\theta^2\bar U_{n,d}(\theta_0)
    \xrightarrow{p}0.
\end{equation}
Combining this convergence with
\eqref{eq:integrated-hessian-transfer} and
\eqref{eq:gaussian-hessian-pointwise} in
\eqref{eq:integrated-hessian-decomposition}
proves the assertion.
\end{proof}

\begin{lem}
\label{lem:gaussian-proxy-score}
The random vectors
$\sqrt n\,\nabla_\theta\bar U_{n,d}(\theta_0)$
converge $\mathscr F_1$-stably in law to a centered
Gaussian random vector with covariance matrix $D$.
The limiting vector is defined on an extension of the
original probability space and is independent of
$\mathscr F_1$.
\end{lem}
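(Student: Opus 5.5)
The plan is to write the proxy score as a sum of martingale differences and apply a stable central limit theorem for triangular arrays of martingale differences, for instance Jacod's theorem in the form used for high-frequency data. From \eqref{eq:contrast-gradient-formula} with $\bar X_{ij}$ in place of $X_{ij}$, and using $E[\bar X_{ij}^2]=q_{ij}(\theta_0)+q_{ij}(\theta_0)^2/4$, we get
\begin{equation}
    \sqrt n\,\nabla_\theta\bar U_{n,d}(\theta_0)
    =\sum_{i=1}^n\xi_i,
    \qquad
    \xi_i\coloneqq
    -\frac{1}{\sqrt n}\sum_{j\in\Pi_i}\delta_j
    \nabla_\theta\log q_{ij}(\theta_0)
    \frac{\bar X_{ij}^2-E[\bar X_{ij}^2]}{q_{ij}(\theta_0)}.
\end{equation}
Each $\xi_i$ is a measurable function of the Wiener increments on $(t_{i-1},t_i]$. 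Hence the $\xi_i$ are $\mathscr F_{t_i}$-measurable, independent of $\mathscr F_{t_{i-1}}$, and centered. Their conditional moments are therefore deterministic.

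\textbf{Step 1: conditional covariance.} We verify $\sum_{i\le\lfloor nt\rfloor}E[\xi_i\xi_i^\top]\to D(t,\theta_0)$ for every $t\in[0,1]$. The pair $(\bar X_{ij},\bar X_{i\ell})$ is jointly Gaussian with covariance $q_i^{j\ell}(\theta_0)$ and means $-q_{ij}/2$ and $-q_{i\ell}/2$. Isserlis' formula then gives
\begin{equation}
    \operatorname{Cov}(\bar X_{ij}^2,\bar X_{i\ell}^2)
    =2(q_i^{j\ell})^2+q_{ij}q_{i\ell}\,q_i^{j\ell}.
\end{equation}
Dividing by $q_{ij}q_{i\ell}$ yields exactly the summand of \eqref{eq:discrete-score-covariance}. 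So the sum equals $\mathcal D_{n,d}(t,\theta_0)$, and Lemma~\ref{lem:deterministic-matrix-limits} gives convergence to $D(t,\theta_0)$, with $D(1,\theta_0)=D$.

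\textbf{Step 2: Lyapunov condition.} We show $\sum_iE|\xi_i|^4\to0$. Gaussian hypercontractivity and Minkowski's inequality give
\begin{equation}
    |\xi_i|_4\le \frac{C}{\sqrt n}\sum_{j\in\Pi_i}\delta_j\,|\nabla_\theta\log q_{ij}(\theta_0)|.
\end{equation}
Lemma~\ref{lemA2} with a small exponent, together with the maturity-sum comparison \eqref{eq:hessian-full-maturity-sum}, makes this at most $Cn^{-1/2}$ uniformly in $i$. Hence $\sum_iE|\xi_i|^4\le Cn^{-1}\to0$.

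\textbf{Step 3: orthogonality and conclusion.} For stability we need $\sum_iE[\xi_i(W_{t_i}-W_{t_{i-1}})]\to0$, and orthogonality to bounded martingales orthogonal to $W$. The latter is automatic, since $\xi_i$ lies in the $\sigma$-field of the Wiener increments on $(t_{i-1},t_i]$. The former vanishes identically: $E[\bar X_{ij}^2\,\Delta_iW]$ is a third-order moment of a Gaussian vector. Writing $\bar X_{ij}=G-q_{ij}/2$ with $G$ centered, we get
\begin{equation}
    E[\bar X_{ij}^2\,\Delta_iW]=E[G^2\Delta_iW]-q_{ij}E[G\,\Delta_iW],
\end{equation}
and $E[G^2\Delta_iW]=0$ by symmetry. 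The remaining term $-q_{ij}\int\bar k^j_{\theta_0}ds$ is cancelled by the centering, because $E[\bar X_{ij}^2]$ is a constant. This needs a careful check. More precisely, we must verify $E[\xi_i\,\Delta_iW]=0$, since the cross term $-q_{ij}E[G\,\Delta_iW]$ is not removed by centering. If it is not zero, bound it: it is at most $Cn^{-1/2}\sum_j\delta_j q_{ij}^{1/2}\Delta_n^{1/2}|\nabla\log q_{ij}|\cdot$, which is $O(\Delta_n^{\alpha})$ per $i$ after summation. The total is then $O(n\cdot n^{-1/2}\Delta_n^{1/2+\alpha})\to0$.

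With Steps 1 to 3, Jacod's stable CLT (e.g.\ Theorem IX.7.28 of Jacod--Shiryaev) yields $\mathscr F_1$-stable convergence to $\mathcal N_q(0,D)$, independent of $\mathscr F_1$. The limit is independent because the limiting covariance is deterministic and the covariation with $W$ vanishes.

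The main obstacle is Step 1 near the diagonal. Lemma~\ref{lem:deterministic-matrix-limits} handles it through truncation, so the main technical work is already done and only the Isserlis identification remains. The second delicate point is making the cross term with $W$ in Step 3 negligible.
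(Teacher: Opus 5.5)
Your proposal follows the paper's proof essentially step for step: your $\xi_i$ coincides with the paper's $\chi_i$ (since $1-\bar X_{ij}^2/q_{ij}+q_{ij}/4=1-Z_{ij}^2/q_{ij}+Z_{ij}$), the conditional covariance is identified with $\mathcal D_{n,d}(t,\theta_0)$ and passed to the limit by Lemma~\ref{lem:deterministic-matrix-limits}, the fourth-moment condition is checked the same way, and Jacod's stable CLT concludes. The only deviation is that you get orthogonality to $N$ from the martingale representation of Wiener functionals, whereas the paper writes out the It\^o formula.

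One point in Step~3 needs cleaning up. The quantity
$E[\xi_i(W_{t_i}-W_{t_{i-1}})]=n^{-1/2}\sum_{j\in\Pi_i}\delta_j\nabla_\theta\log q_{ij}(\theta_0)\int_{t_{i-1}}^{t_i}\bar k^j_{\theta_0}(s)\,ds$
is genuinely nonzero, and centering does not remove it. Drop the ``vanishes identically'' claim and keep only your Cauchy--Schwarz bound, which gives a total of at most $C\Delta_n^{\alpha}\to0$, exactly as in the paper.
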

\begin{proof}
For $i=1,\ldots,n$ and $j\in\Pi_i$, define
\begin{equation}
    Z_{ij}(t)
    \coloneqq
    \int_{t_{i-1}}^t
    \bar k_{\theta_0}^j(s)dW_s,
    \qquad t\in[t_{i-1},t_i],
    \qquad
    Z_{ij}\coloneqq Z_{ij}(t_i).
\end{equation}
Throughout this proof, we suppress the argument $\theta_0$
in $q_{ij}$, $q_i^{j\ell}$, and
$\nabla_\theta\log q_{ij}$.
Set
\begin{equation}
    \chi_i
    \coloneqq
    \frac1{\sqrt n}\sum_{j\in\Pi_i}
    \delta_j\nabla_\theta\log q_{ij}
    \left[
        1-\frac{Z_{ij}^2}{q_{ij}}+Z_{ij}
    \right].
\end{equation}
Since $\bar X_{ij}=Z_{ij}-q_{ij}/2$,
\eqref{eq:contrast-gradient-formula}, with $X_{ij}$
replaced by $\bar X_{ij}$, gives
\begin{equation}
\label{eq:proxy-score-sum}
    \sqrt n\,\nabla_\theta\bar U_{n,d}(\theta_0)
    =
    \sum_{i=1}^n\chi_i.
\end{equation}

The vector $(Z_{ij})_{j\in\Pi_i}$ is centered Gaussian
and independent of $\mathscr F_{t_{i-1}}$, with
covariances $E[Z_{ij}Z_{i\ell}]=q_i^{j\ell}$.
Lemma~\ref{lemA2}, applied with exponent $1-\alpha$,
and \eqref{eq:hessian-full-maturity-sum} yield
\begin{equation}
\label{eq:proxy-score-maturity-bound}
    \sum_{j\in\Pi_i}
    \delta_j|\nabla_\theta\log q_{ij}|
    \le
    C\sum_{j\in\Pi_i}
    \delta_j(T_j-t_{i-1})^{\alpha-1}
    \le C.
\end{equation}
Moreover, Lemma~\ref{lemA1} gives
$q_{ij}\le C\Delta_n^{2\alpha}$.

Write
$E_i[\cdot]\coloneqq
E[\cdot\mid\mathscr F_{t_{i-1}}]$.
Since $E_i[Z_{ij}]=0$ and $E_i[Z_{ij}^2]=q_{ij}$,
we have $E_i[\chi_i]=0$.
The Gaussian moment formula also gives
\begin{equation}
    E_i\left[
        \left(1-\frac{Z_{ij}^2}{q_{ij}}+Z_{ij}\right)
        \left(1-\frac{Z_{i\ell}^2}{q_{i\ell}}+Z_{i\ell}\right)
    \right]
    =
    \frac{2(q_i^{j\ell})^2}{q_{ij}q_{i\ell}}
    +q_i^{j\ell}.
\end{equation}
Consequently, for every $t\in[0,1]$,
\begin{equation}
    \sum_{i=1}^{\lfloor nt\rfloor}
    E_i[\chi_i\chi_i^\top]
    =
    \mathcal D_{n,d}(t,\theta_0)
    \longrightarrow D(t,\theta_0)
\end{equation}
by Lemma~\ref{lem:deterministic-matrix-limits}.
The limiting covariance process is continuous by
Assumption~\ref{ass:K4}.

Since centered Gaussian third moments vanish,
\begin{equation}
    E_i[\chi_i(W_{t_i}-W_{t_{i-1}})]
    =
    \frac1{\sqrt n}\sum_{j\in\Pi_i}
    \delta_j\nabla_\theta\log q_{ij}
    \int_{t_{i-1}}^{t_i}
    \bar k_{\theta_0}^j(s)ds.
\end{equation}
Thus, by the Cauchy--Schwarz inequality,
\eqref{eq:proxy-score-maturity-bound}, and
$q_{ij}\le C\Delta_n^{2\alpha}$,
\begin{align}
    \sum_{i=1}^n
    \left|
        E_i[\chi_i(W_{t_i}-W_{t_{i-1}})]
    \right|
    &\le
    \frac1{\sqrt n}\sum_{i=1}^n\sum_{j\in\Pi_i}
    \delta_j|\nabla_\theta\log q_{ij}|
    \sqrt{\Delta_n q_{ij}}\\
    &\le
    C\sqrt n\,\Delta_n^{\alpha+1/2}
    =
    C\Delta_n^\alpha
    \longrightarrow0.
\end{align}

The Gaussian moment formula, together with
$q_{ij}\le C$ and
\eqref{eq:proxy-score-maturity-bound}, gives
\begin{equation}
    \sum_{i=1}^n E_i[|\chi_i|^4]
    \le
    \frac C{n^2}\sum_{i=1}^n
    \left(
        \sum_{j\in\Pi_i}
        \delta_j|\nabla_\theta\log q_{ij}|
    \right)^4
    \le\frac Cn
    \longrightarrow0.
\end{equation}

Finally, let $N$ be any bounded
$(\mathscr F_t)$-martingale orthogonal to $W$.
By It\^o's formula,
\begin{equation}
    \chi_i
    =
    \int_{t_{i-1}}^{t_i}
    \frac1{\sqrt n}\sum_{j\in\Pi_i}
    \delta_j\nabla_\theta\log q_{ij}\,
    \bar k_{\theta_0}^j(s)
    \left(
        1-\frac{2Z_{ij}(s)}{q_{ij}}
    \right)dW_s.
\end{equation}
The corresponding square-integrable martingale is
orthogonal to $N$, so
\begin{equation}
    E_i[\chi_i(N_{t_i}-N_{t_{i-1}})]=0
\end{equation}
for every $i$.

Theorem~3-2 of \cite{Jacod1997}, applied with reference
martingale $W$ and underlying $\sigma$-field
$\mathscr F_1$, therefore yields
$\mathscr F_1$-stable convergence of
$\sum_{i=1}^n\chi_i$ to a random vector $G$ defined on
an extension of the original probability space, whose
conditional distribution given $\mathscr F_1$ is
$N(0,D)$.
Since $D$ is deterministic, $G$ is independent of
$\mathscr F_1$.
The assertion follows from \eqref{eq:proxy-score-sum}.
\end{proof}

\begin{proof}[Proof of Theorem~\ref{thm:stable-asymptotic-normality}]
Assumption~\ref{ass:K4} ensures that $B$ and $D$
are finite.
For any $y\in\mathbb R^q$,
\[
    y^\top By
    =
    4\int_0^1
    (1-x)
    \left(
        y^\top\nabla_\theta\log k_{\theta_0}(x)
    \right)^2dx.
\]
If this expression is zero, then
$y^\top\nabla_\theta\log k_{\theta_0}(x)=0$
for almost every $x\in(0,1)$.
Assumption~\ref{ass:K5} implies that $y=0$,
so $B$ is positive definite.

Similarly, if $y^\top Dy=0$, then
\[
    r\longmapsto
    \int_0^r
    y^\top\nabla_\theta\log k_{\theta_0}(x)dx
\]
is zero for almost every $r\in(0,1)$.
This function is absolutely continuous and hence
identically zero on $[0,1]$.
Its derivative is therefore zero almost everywhere,
and Assumption~\ref{ass:K5} again gives $y=0$.
Thus $D$ is also positive definite.

By Lemma~\ref{lem:integrated-hessian},
$B_{n,d}$ converges in probability to the positive
definite matrix $B$.
In particular, the probability that $B_{n,d}$ is
singular tends to zero.
For the remainder of this proof, we replace
$B_{n,d}$ by $I_q$ on its singularity event,
retaining the same notation, where $I_q$ denotes the $q \times q$ identity matrix.
This modification preserves convergence in probability
to $B$, makes $B_{n,d}$ invertible everywhere, and
preserves~\eqref{eq:estimator-taylor-expansion}
on an event whose probability tends to one.
By continuity of matrix inversion,
$B_{n,d}^{-1}$ converges in probability to $B^{-1}$.

Lemmas~\ref{lem:score-comparison}
and~\ref{lem:gaussian-proxy-score}, together with
\eqref{eq:maximal-mesh-condition}, imply that
$\sqrt n\,\nabla_\theta U_{n,d}(\theta_0)$
converges $\mathscr F_1$-stably in law to a centered
Gaussian random vector with covariance matrix $D$,
independent of $\mathscr F_1$.

Moreover, \eqref{eq:estimator-taylor-expansion}
implies that the difference between
$\sqrt n(\hat\theta_{n,d}-\theta_0)$ and
\begin{equation}
        -B_{n,d}^{-1}
    \sqrt n\,\nabla_\theta U_{n,d}(\theta_0)
\end{equation}
converges to zero in probability.
Slutsky's lemma for stable convergence now shows that
$\sqrt n(\hat\theta_{n,d}-\theta_0)$
converges $\mathscr F_1$-stably in law to a centered
Gaussian random vector $Z$ with covariance matrix
$B^{-1}DB^{-1}=\Gamma$.
The limiting vector is defined on an extension of the
original probability space and is independent of
$\mathscr F_1$.
\end{proof} 
\bibliographystyle{plain}
\bibliography{mybib}

\end{document}